\DeclareMathOperator*{\esssup}{ess\,sup}
\newcommand{\CC}{\D{C}}
\newcommand{\R}{\mathbb R}
\newcommand{\Length}{\mathbb L}
\newcommand{\Lip}{\D{Lip}}
\newcommand{\N}{\mathbb N}
\newcommand{\Z}{\mathbb Z}
\newcommand{\comp}{\mbox{\scriptsize  $\circ$}}
\newcommand{\eps}{\varepsilon}
\newcommand{\supp}{\operatorname{supp}}
\newcommand\INT[1]{\mathaccent'27{#1}}
\newcommand{\A}{\mathcal{A}}
\newcommand{\Bor}{\mathscr{B}}
\newcommand{\e}{\textrm{\rm e}}
\newcommand{\M}{\mathcal{M}}
\newcommand{\Mis}{\mathfrak{M}}
\newcommand{\leb}{\mathcal{L}}
\newcommand{\tagliato}{$\kern-5 mm -$}
\newcommand{\tagliat}{$\kern-4 mm -$}
\newcommand{\D}[1]{\mbox{\rm #1}}
\newcommand{\Gr}[1]{\operatorname{Graph}(#1)}
\newcommand{\diam}[1]{\operatorname{diam}(#1)}
\newtheorem{teorema}{Theorem}[section]
\newtheorem{prop}[teorema]{Proposition}
\newtheorem{lemma}[teorema]{Lemma}
\newtheorem{definition}[teorema]{Definition}
\newtheorem{cor}[teorema]{Corollary}
\newtheorem{guess}[teorema]{Remark}
\newtheorem{example}[teorema]{Example}
\numberwithin{equation}{section}
\begin{document}

\title{Convergence of the solutions\\ of the discounted equation}

\thanks{\rm Work supported by ANR-07-BLAN-0361-02 KAM faible \&
ANR-12-BS01-0020 WKBHJ}
\author{Andrea Davini, Albert Fathi, Renato Iturriaga \and Maxime Zavidovique}
\address{Dip. di Matematica, {Sapienza} Universit\`a di Roma,
P.le Aldo Moro 2, 00185 Roma, Italy}
\email{davini@mat.uniroma1.it}
\address{UMPA, ENS-Lyon, 46 all\'ee d'Italie, 69364 Lyon Cedex 7, France}
\email{albert.fathi@ens-lyon.fr}
\address{Cimat, Valenciana Guanajuato, M\'exico 36000}
\email{renato@cimat.mx}
\address{
IMJ-PRG (projet Analyse Alg\' ebrique), UPMC,  
4, place Jussieu, Case 247, 75252 Paris Cedex 5, France}
\email{zavidovique@math.jussieu.fr} \keywords{asymptotic behavior of solutions, weak KAM Theory, viscosity solutions, optimal control}
\subjclass[2010]{35B40, 37J50, 49L25.}

\date{Submitted Version 12 Mars 2014}
\begin{abstract} We consider a continuous coercive Hamiltonian $H$ on the cotangent
bundle of the compact connected manifold $M$ which is convex in the momentum.
If $u_\lambda:M\to\R$ is the viscosity solution of the discounted equation 
\[
  \lambda u_\lambda(x)+H(x,d_x u_\lambda)=c(H),
\]
where $c(H)$ is the critical value, we prove that  $u_\lambda$ converges uniformly, as $\lambda\to 0$, to a specific solution $u_0:M\to\R$ of the critical equation 
\[
H(x,d_x u)=c(H).
\]
We characterize $u_0$ in terms of Peierls barrier and projected Mather measures. 
\end{abstract}

\maketitle
\section{Introduction}
The so called {\em ergodic approximation} is a technique introduced in \cite{LPV}  to show the existence of viscosity solutions to an equation of the kind 
\begin{equation}\label{intro eq hja}
H(x,d_x u)=c,
\end{equation}
where $c$ is a real constant and $H$, the {\em Hamiltonian}, is a continuous function defined on  
$\mathbb{T}^k\times \R ^k$, where $\mathbb{T}^k=\R^k/\Z^k$ is the canonical flat torus. 

In fact, the arguments in \cite{LPV} work as well for a Hamiltonian defined on the cotangent bundle of a compact manifold. Therefore in the sequel $H:T^*M\to \R$ will be a given continuous function, called the Hamiltonian, where $T^*M$ is the cotangent bundle of $M$,  compact connected manifold without boundary.

The method in \cite{LPV} to find solutions of \eqref{intro eq hja}  is to perturb the Hamiltonian by adding a term consisting of $u$ multiplied by a positive parameter $\lambda$ to obtain 
the {\em discounted equation}  
\begin{equation}\label{intro eq discount}
 \lambda u(x)+H(x,d_x u)=0.
\end{equation}
This equation obeys a maximum principle, and therefore it has a unique solution $u_\lambda:M\to\R$. The idea is then to study the behavior of $u_\lambda$ when the  {\em discount factor} $\lambda$  tends to zero. When the Hamiltonian $H(x,p)$ is coercive in $p$, uniformly with respect to $x$, the functions $\lambda u_\lambda$ are equi-bounded and the  $u_\lambda$ are equi-Lipschitz. Furthermore, the functions $-\lambda u_\lambda$ uniformly converge on $M$, as $\lambda$ tends to $0$, to a constant $c(H)$, henceforth termed {\em critical value}. By   adding a suitable constant to each function $u_\lambda$, we obtain an equi-bounded and equi-Lipschitz family of functions $\hat u_\lambda:M\to\R$ satisfying, for each $\lambda>0$,  
\begin{equation*}\label{intro eq renormalized discount}
H(x,d_x \hat u_\lambda)= -\lambda u_\lambda(x)
\end{equation*}
in the viscosity sense. By the Ascoli--Arzel\`a Theorem and the stability of the notion of viscosity solution, we derive that the functions 
$\hat u_\lambda$ uniformly converge, {\em along subsequences} as $\lambda$ goes to $0$, to global viscosity solutions on $M$ of the {\em critical equation}
\begin{equation}\label{intro eq critica}
H(x,d_x u)=c(H)\qquad\hbox{in $M$.}
\end{equation}
This is also the sole equation of the family \eqref{intro eq hja} that admits solutions. Solutions, subsolutions and supersolutions of \eqref{intro eq critica} will be termed {\em critical} in the sequel.      

Due to the lack of a uniqueness result for the critical equation, it is not clear at this point that limits of $\hat u_\lambda$ along different subsequences yield the same solution of \eqref{intro eq critica}. In this paper, we address the problem when $H$ is convex in the momentum.

\begin{teorema}\label{MAINTHEOREM} Let $H:T^*M\to \R$ be a continuous Hamiltonian, which is coercive, and convex in the momentum. For $\lambda>0$, denote by 
$u_\lambda:M\to \R$ the unique continuous viscosity solution of
\begin{equation}\label{eq discount}
\lambda u_\lambda+H(x,d_xu_\lambda)=c(H),
\end{equation}
where $c(H)$ is the critical value of $H$. The family $u_\lambda$ converges as $\lambda\to 0$, to a single critical solution $u_0$.
\end{teorema}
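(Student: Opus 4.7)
The plan is to identify a distinguished critical solution $u_0$ by a dual extremal characterization involving projected Mather measures, and to show that every uniform accumulation point of the equi-Lipschitz family $\{u_\lambda\}$ coincides with $u_0$. Using convexity and coercivity, introduce the Lagrangian $L(x,v):=\sup_{p}\bigl(\langle p,v\rangle-H(x,p)\bigr)$, which is lower semicontinuous, convex, and superlinear in $v$. The solution $u_\lambda$ has the variational formula
\[
u_\lambda(x)=\inf_{\gamma}\int_{-\infty}^{0}\!e^{\lambda s}\bigl[L(\gamma(s),\dot\gamma(s))+c(H)\bigr]\,ds,
\]
over absolutely continuous $\gamma:(-\infty,0]\to M$ with $\gamma(0)=x$. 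Recall that Mather measures are the closed probability measures $\tilde\mu$ on $TM$ minimizing $\int L\,d\tilde\mu$, with minimum value $-c(H)$. Define the candidate
\[
u_0(x):=\sup\Bigl\{w(x):w\ \text{critical subsolution},\ \int_M w\,d(\pi_\sharp\tilde\mu)\le 0\ \text{for every Mather measure}\ \tilde\mu\Bigr\},
\]
which is itself a critical solution by the standard weak KAM stability of the subsolution cone.

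Upper estimate: every limit point satisfies $u_\infty\le u_0$. Since $u_\lambda$ is Lipschitz and solves the discounted equation, at almost every point Fenchel's inequality gives $\langle d_x u_\lambda,v\rangle\le L(x,v)+H(x,d_xu_\lambda)\le L(x,v)+c(H)-\lambda u_\lambda(x)$. Integrating against a Mather measure $\tilde\mu$ and using the closedness property $\int\langle du_\lambda,v\rangle\,d\tilde\mu=0$ yields
\[
0\le \int L\,d\tilde\mu+c(H)-\lambda\int u_\lambda\,d(\pi_\sharp\tilde\mu)=-\lambda\int u_\lambda\,d(\pi_\sharp\tilde\mu),
\]
so $\int u_\lambda\,d(\pi_\sharp\tilde\mu)\le 0$. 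Passing to a uniform limit along a subsequence and invoking stability of viscosity (sub)solutions, the limit $u_\infty$ is a critical solution still satisfying the Mather constraint, hence $u_\infty\le u_0$.

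Lower estimate: every limit point satisfies $u_\infty\ge u_0$. Fix an admissible $w$ in the sup defining $u_0$ and a curve $\gamma_\lambda$ that nearly realizes the infimum for $u_\lambda(x)$. Form the discounted occupation measure
\[
\tilde\mu_\lambda:=\lambda\int_{-\infty}^{0}\!e^{\lambda s}\,\delta_{(\gamma_\lambda(s),\dot\gamma_\lambda(s))}\,ds,
\]
a probability measure on $TM$ whose $L$-integral equals $\lambda u_\lambda(x)-c(H)\to -c(H)$. Superlinearity of $L$ together with the equi-Lipschitz bound on $u_\lambda$ gives tightness, and an elementary computation shows the defect from closedness tends to zero; hence every weak$^*$ cluster point of $\tilde\mu_\lambda$ is a Mather measure. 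Applying $\langle dw,v\rangle\le L(x,v)+c(H)$ along $\gamma_\lambda$ and multiplying by $e^{\lambda s}$ gives $\frac{d}{ds}\bigl(e^{\lambda s}w(\gamma_\lambda(s))\bigr)\le \lambda e^{\lambda s}w(\gamma_\lambda(s))+e^{\lambda s}[L+c(H)]$; integrating over $(-\infty,0]$ yields
\[
w(x)\le \int_M w\,d(\pi_\sharp\tilde\mu_\lambda)+u_\lambda(x)+o(1).
\]
Passing to a subsequence with $\tilde\mu_\lambda\weakst\tilde\mu$ Mather, the integral on the right is $\le 0$ by assumption on $w$, giving $u_\infty(x)\ge w(x)$. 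Taking the supremum over $w$ produces $u_\infty\ge u_0$, so $u_\infty=u_0$, and every subsequence of $u_\lambda$ has the same limit $u_0$.

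The main obstacle is the rigorous integration of Fenchel's inequality and of the chain rule for the merely Lipschitz functions $u_\lambda$ and $w$ against measures on $TM$; this typically requires either smoothing subsolutions (available from the Bernard-Fathi regularization in the coercive convex setting) or using the Lipschitz graph of $du_\lambda$ together with absolute continuity of $s\mapsto u_\lambda(\gamma_\lambda(s))$. A secondary technical point is the tightness and almost-closedness of the discounted empirical measures $\tilde\mu_\lambda$, which ultimately rests on the equi-Lipschitz bound for $u_\lambda$ and the uniform superlinearity of $L$ inherited from coercivity of $H$.
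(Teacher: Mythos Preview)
Your proposal is correct and follows essentially the same route as the paper: define $u_0$ as the supremum of critical subsolutions satisfying the Mather-measure constraint, obtain the upper bound $u_\infty\le u_0$ by integrating Fenchel's inequality against a Mather measure (the paper's Proposition~\ref{ineq lim}), and obtain the lower bound via discounted occupation measures along calibrating curves which converge weakly to Mather measures (the paper's Lemma~\ref{LEMCONVMAT} and Lemma~\ref{ineq prim}). You also correctly isolate the main technical point---integrating Fenchel and the chain rule for merely Lipschitz $u_\lambda$ and $w$---and its resolution by $C^1$ approximation of subsolutions, which is exactly how the paper proceeds (Theorem~\ref{EncoreUneVariante}). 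One small slip: stability of the subsolution cone only gives that $u_0$ is a critical \emph{subsolution} a priori; the paper deduces that $u_0$ is a \emph{solution} a posteriori, as the uniform limit of the solutions $u_\lambda$. Also, tightness of $\tilde\mu_\lambda$ in the paper comes not directly from superlinearity of $L$ but from the equi-Lipschitz bound on the calibrating curves $\gamma_\lambda$ (Proposition~\ref{prop calibrating}), itself a consequence of the equi-Lipschitz bound on $u_\lambda$; your sketch is compatible with this but slightly compresses the chain of implications.
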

Note that we have replaced $0$ in the second member of \eqref{intro eq discount} 
by the critical constant $c(H)$. 
With this choice, the solutions of 
\eqref{eq discount} are uniformly bounded independently of $\lambda$,
see Corollary \ref{cor discounted solution} below.
Note also that the solution of 
$$\lambda u+H(x,d_xu)=c$$
is $u_\lambda+ c/\lambda$, where $u_\lambda$ is the solution of \eqref{intro eq discount}.
Therefore there is at most one $c$ for which the family of solutions are bounded,
independently of $\lambda$.

In fact, as we will see, without loss of generality we can assume in Theorem
\ref{MAINTHEOREM} that $H$ is superlinear. In that case, by Fenchel's formula,
the Hamiltonian $H$ has a conjugated Lagrangian $L:TM\to \R$ which is
superlinear and convex in the fibers of the tangent bundle.
We can then apply weak KAM theory--or rather its extension to general Lagrangians,
see the appendices to this paper--to characterize $u_0$ in terms of the Peierls barrier 
and of projected Mather measures--defined respectively by 
equation \eqref{def h} and Definition \ref{defMather} in   \S \ref{sez weak KAM theory}.
\begin{prop}\label{characu0} The function $u_0=\lim_{\lambda\to 0}u_\lambda$, obtained in Theorem \ref{MAINTHEOREM} above, can be characterized in either 
of the following two ways:
\begin{enumerate} 
\item[{\rm (i)}] it is the largest critical 
subsolution $u:M\to\R$ such that $\int_Mu\,d\mu\leq 0$ for every projected 
Mather measure $\mu$,
\item[{\rm (ii)}] it is the infimum over all projected Mather measure $\mu$ of the functions $h_\mu$ defined by $h_\mu(x)=\int_M h(y,x)\, d\mu(y)$, where $h$ is the Peierls barrier.\end{enumerate}
\end{prop}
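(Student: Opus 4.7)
Write $U$ for the supremum in (i) and $W$ for the infimum in (ii). The plan is to establish the chain $u_0\leq U\leq W\leq u_0$, which forces equality throughout and identifies $u_0$ with both characterizations. The first two inequalities are short once one integrates the discounted equation against a Mather measure; the final inequality $W\leq u_0$ is the main work, and will be obtained by extracting a Mather measure as a weak limit of the natural probabilities associated with the optimal curves of the discounted problem.

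\textbf{Easy directions.} For $u_0\leq U$: Theorem \ref{MAINTHEOREM} provides that $u_0$ is a critical solution, so it remains to check $\int u_0\,d\mu\leq 0$ for each projected Mather measure $\mu$. Lift $\mu$ to a Mather measure $\tilde\mu$ on $TM$ and integrate equation \eqref{eq discount} against $\tilde\mu$. Fenchel's inequality $H(x,d_xu_\lambda)\geq d_xu_\lambda(v)-L(x,v)$, the closedness identity $\int d_xu_\lambda(v)\,d\tilde\mu=0$ (a standard weak KAM fact for Lipschitz functions against flow-invariant measures), and the minimality $\int L\,d\tilde\mu=-c(H)$ together give $\lambda\int u_\lambda\,d\mu\leq 0$; let $\lambda\to 0$. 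For $U\leq W$: for any $u$ in the class of (i) and any projected Mather measure $\mu$, integrating the weak KAM inequality $u(x)-u(y)\leq h(y,x)$ against $d\mu(y)$ yields $u(x)\leq h_\mu(x)+\int u\,d\mu\leq h_\mu(x)$, so $u\leq W$, whence $U\leq W$.

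\textbf{Core step $W\leq u_0$: construction of a Mather measure.} Assuming (as is permitted in the paper) that $H$ is superlinear, one has a Tonelli Lagrangian $L$ and the representation
\[
u_\lambda(x)=\int_{-\infty}^0 e^{\lambda s}\bigl(L(\gamma_\lambda,\dot\gamma_\lambda)+c(H)\bigr)\,ds
\]
achieved by an absolutely continuous curve $\gamma_\lambda\colon(-\infty,0]\to M$ with $\gamma_\lambda(0)=x$. Define the probability measure
\[
\tilde\mu_\lambda=\lambda\int_{-\infty}^0 e^{\lambda s}\,\delta_{(\gamma_\lambda(s),\dot\gamma_\lambda(s))}\,ds\in\mathcal{P}(TM).
\]
Integration by parts yields, for any smooth $f\colon M\to\R$,
\[
\int d_xf(v)\,d\tilde\mu_\lambda=\lambda f(x)-\lambda^2\int_{-\infty}^0 e^{\lambda s}f(\gamma_\lambda(s))\,ds,
\]
which tends to $0$ as $\lambda\to 0$, while $\int L\,d\tilde\mu_\lambda=\lambda u_\lambda(x)-c(H)\to -c(H)$ by the equi-boundedness of $u_\lambda$. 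Any subsequential weak limit $\tilde\mu_0$ is then a closed probability with action $-c(H)$, i.e.\ a Mather measure by Ma\~n\'e's characterization; let $\mu_0$ be its projection on $M$.

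\textbf{Conclusion and main obstacle.} For $x$ in the projected Aubry set $\A$, one has $h(x,x)=0$, and the subadditivity of the finite-time action along with the curve $\gamma_\lambda$ itself gives $h(y,x)\leq h_{|s|}(y,x)\leq\int_s^0(L+c(H))\,dt$ with $y=\gamma_\lambda(s)$. A Fubini switch then produces
\[
h_{\mu_\lambda}(x)=\lambda\int_{-\infty}^0 e^{\lambda s}h(\gamma_\lambda(s),x)\,ds\leq u_\lambda(x)+o(1),
\]
and weak convergence of $\mu_\lambda$ together with continuity of $h(\cdot,x)$ yields $h_{\mu_0}(x)\leq u_0(x)$, so $W(x)\leq u_0(x)$ for every $x\in\A$. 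For a general $x\in M$, the representation $u_0(x)=u_0(z)+h(z,x)$ for some $z\in\A$ (valid for critical solutions) together with $h_\mu(x)\leq h_\mu(z)+h(z,x)$ (triangle inequality for $h$) extends the bound to all of $M$. The key technical obstacle is the quantitative Fubini estimate: one must control the gap between the weight $e^{\lambda s}$ and the uniform weight on a finite window $[-T,0]$ and, away from the Aubry set, the fact that $h$ is only a liminf of the $h_T$. Both are handled by splitting the integral into a head on $[-T,0]$ (whose $\tilde\mu_\lambda$-mass $1-e^{-\lambda T}$ vanishes as $\lambda\to 0$) and a tail, exploiting the equi-Lipschitz regularity of $u_\lambda$ (hence uniform bounds on $|\dot\gamma_\lambda|$ and on $L$ along these curves) so that the error terms vanish in the limit.
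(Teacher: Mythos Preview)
Your overall chain $u_0\leq U\leq W\leq u_0$ is correct, and the first two links are exactly what the paper does (Proposition~\ref{ineq lim} for $u_0\leq U$, Lemma~\ref{lemma2Hatu0} for $U\leq W$). The interesting point is the final link $W\leq u_0$, where you take a genuinely different route from the paper.

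The paper does \emph{not} estimate $h_{\mu_\lambda}$ directly. Instead, having already established characterization~(i) as part of the proof of convergence (via the key inequality $u_\lambda(x)\geq w(x)-\int_M w\,d\tilde\mu_x^\lambda$ of Lemma~\ref{ineq prim}), it deduces (ii) by an elegant trick: for $x\in\A$ the function $y\mapsto -h(y,x)+\hat u_0(x)$ is a critical subsolution in $\mathcal F_-$, so its value at $x$, namely $\hat u_0(x)$, is $\leq u_0(x)$; the extension from $\A$ to $M$ then uses the comparison principle (Theorem~\ref{teo uniqueness set}) with $\hat u_0$ as subsolution. Thus the paper leverages (i) and the fact that $-h(\cdot,x)$ is a subsolution, and never touches the optimal curves again in Section~6.

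Your Fubini argument is a nice alternative and actually cleaner than you present it: for $x\in\A$ one has $h(y,x)\leq h_t(y,x)$ for all $t>0$ (from $h(y,x)\leq h_t(y,x)+h(x,x)$ and $h(x,x)=0$), and then the switch of integration gives \emph{exactly}
\[
\lambda\int_{-\infty}^0 e^{\lambda s}h(\gamma_\lambda(s),x)\,ds
\leq \int_{-\infty}^0 e^{\lambda t}L(\gamma_\lambda(t),\dot\gamma_\lambda(t))\,dt
= u_\lambda(x),
\]
with no $o(1)$ and no need to split into head and tail---your closing paragraph about the ``quantitative Fubini estimate'' is unnecessary and can be dropped. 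Two small caveats: the closedness identity $\int d_xu_\lambda(v)\,d\tilde\mu=0$ you invoke for Lipschitz $u_\lambda$ needs justification in this non-smooth setting (the paper handles it by smooth approximation, Theorem~\ref{EncoreUneVariante}); and your extension from $\A$ to $M$ relies on the representation $u_0(x)=\min_{z\in\A}\bigl(u_0(z)+h(z,x)\bigr)$, which is standard but not stated in the paper---the paper's route via Theorem~\ref{teo uniqueness set} avoids this.
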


The theorem and proposition above extend the results of Renato Iturriaga and 
Hector S\'anchez-Morgado \cite{IS},  where the convergence is proved for a Tonelli Hamiltonian under the assumption that the Aubry set consists of a finite number 
of hyperbolic fixed point of the Lagrangian flow. In \cite{Gomes}, Diogo Gomes
found some constraints on the possible accumulation points of $u_\lambda$
in terms of a concept of generalized Mather measures.

Since we could prove the results without any regularity assumptions on $H$,
we need to establish firmly the weak KAM theory beyond its previous scopes
to general continuous superlinear Lagrangians convex in the fibers,
with a particular emphasis on the Mather measures.
This is done in two appendices. Most of the results in these appendices
appear here for the first time in this generality.

\section{Preliminaries}\label{sez notation}

In this work, we will denote by $M$ a compact connected smooth manifold 
without boundary of dimension $m$. It will be convenient to  endow $M$ with 
an auxiliary {\rm C}$^\infty$  Riemannian metric. The associated Riemannian distance on $M$ will  be denoted by $d$.  
We denote by $TM$ the tangent bundle and by $\pi:TM\to M$ the canonical projection. 
A point of $TM$ will be denoted by $(x,v)$ with $x\in M$ and $v\in T_x M =\pi^{-1}(x)$. In the
same way, a point of the cotangent bundle $T^*M$ will be denoted by $(x,p)$, with $x\in M$ and 
$p\in T_x^* M$ a linear form on the vector space $T_x M$. We will denote by $p(v)$ the value of the linear 
form $p\in T_x^*M$ evaluated at $v\in T_xM$, and by $\|v\|_x$ the norm of $v$ at the point $x$. We will use the same notation $\|p\|_x$  
for the dual norm of a form $p\in  T_x^* M$.

On a smooth manifold like $M$, there is an intrinsic notion of measure zero set: a subset $Z$ of $M$ is said to be of measure zero, if for every smooth coordinate patch
$\varphi: U\to \R^m$, the image $\varphi(U\cap Z)$ has Lebesgue measure $0$ in $\R^m$. Note that $Z$ has measure zero in this sense if and only if it has measure $0$ for the
{\em Riemannian volume measure} associated to a Riemannian metric. We say that a property holds {\em almost everywhere} ({\em a.e.} for short) on $M$ if it holds up to a set of measure zero as defined above. 

Since $M$ is compact, we can endow the space $\CC(M,\R)$ of continuous real function on $M$ with the  
{\em sup--norm}
\begin{equation*}%\label{def sup norm}
\|u\|_\infty:=\sup_{x\in M} |u(x)|,\qquad\hbox{$u\in \CC(M,\R)$}.  
\end{equation*}
We will say that $\kappa$ is a {\em Lipschitz constant} for $u\in\CC(M,\R)$ if it satisfies $u(x)-u(y)\leq \kappa\, d(x,y)$ for every $x,y\in M$. Any such a function will be termed {\em Lipschitz}, or {\em $\kappa$--Lipschitz} if we want to specify the Lipschitz constant. The space of real valued Lipschitz functions on $M$ will be denoted by $\Lip(M,\R)$. If the  function $u:M\to\R$ is differentiable at a point $x\in M$, we will denote by $d_x u$ its derivative (called also differential).
Rademacher's theorem states that a (locally) Lipschitz function has a derivative almost everywhere.
\smallskip

Given a continuous function $u$ on $M$, we will call  {\em subtangent} 
(respectively, {\em supertangent}) of $u$ at $y$ a function $\phi$ of class $ C^1$
in a neighborhood $U$ of $y$ such that $u-\phi$ has a local minimum 
(resp., maximum) at $y$. Its differential $d_y\phi$ will be called a {\em subdifferential} 
(resp. {\em superdifferential}) of $u$ at $y$, respectively. The set of sub and
superdifferentials of $u$ at $y$ will be denoted $D^-u(y)$ and 
$D^+u(y)$, respectively. The function $\phi$ will be furthermore termed
{\em strict subtangent} (resp., {\em strict supertangent}) if $u-\phi$ has a {\em strict} 
local minimum (resp., maximum) at $y$. Any subtangent (resp., supertangent) $\phi$ 
of $u$ can be always assumed strict at $y$ without affecting $d_y\phi$ by possibly
replacing it with $\phi-d^2(y,\cdot)$ (resp. $\phi+d^2(y,\cdot)$).

We recall that $u$ is
differentiable at $y$ if and only if $D^+u(y)$ and $D^-u(y)$ are
both nonempty. In this instance, $D^+u(y)=D^-u(y)=\{d_y u\}$. We refer the
reader to \cite{CaSi00} or \cite{FathiSurvey} for the proofs.

Let $G\in\CC(\R\times T^*M)$ and let us consider the following Hamilton--Jacobi equation:
\begin{equation}\label{general HJ equation}
 G(u(x),x,d_x u)=0\qquad\hbox{in $M$.}
\end{equation}
Let $u\in\CC(M,\R)$. We will say that $u$ is a {\em viscosity subsolution} of \eqref{general HJ equation} if 
\begin{equation*}
G(u(x),x,p)\leq 0\quad\hbox{for every $p\in D^+ u(x)$ and $x\in M$.}
\end{equation*}
We will say that $u$ is a {\em viscosity supersolution} of \eqref{general HJ equation} if 
\begin{equation*}
G(u(x),x,p)\geq 0\quad\hbox{for every $p\in D^- u(x)$ and $x\in M$.}
\end{equation*}
We will say that $u$ is a {\em viscosity solution} if it is both a sub and a supersolution. In the sequel, solutions, subsolutions and supersolutions will be always meant in the viscosity sense, hence the adjective {\em viscosity} will be  omitted. Moreover, they will be implicitly assumed continuous, with no further specification.\smallskip 

It is easily seen, by Rademacher's theorem, that a Lipschitz--continuous subsolution is also an almost everywhere subsolution. The converse is not true, in general. However, when $G$ is convex in $p$, the following holds, see any of the references \cite{ bardi,barles,FathiSurvey,Sic}:

\begin{prop}\label{prop subsol equivalent def}
Assume $G\in\CC(\R\times T^*M)$ such that $G(u,x,\cdot)$ is convex in $T^*_x M$ for every fixed $u\in\R$ and $x\in M$. Let $u\in\Lip(M,\R)$. The following facts are equivalent:\medskip
\begin{itemize}
 \item[\rm (i)]\quad $G(u(x),x,p)\leq 0\qquad\ \ \quad\hbox{for every $p\in D^+ u(x)$ and $x\in M$;}$\medskip
 \item[\rm (ii)]\quad $G(u(x),x,p)\leq 0\qquad\ \ \quad\hbox{for every $p\in D^- u(x)$ and $x\in M$;}$\medskip
\item[\rm (iii)]\quad $G(u(x),x,p)\leq 0\qquad\ \ \quad\hbox{for every $p\in \partial^c u(x)$ and $x\in M$;}$\medskip
\item[\rm (iv)]\quad $G(u(x),x,d_x u)\leq 0\qquad\ \ \hbox{for a.e. $x\in M$.}$\medskip
\end{itemize}
\end{prop}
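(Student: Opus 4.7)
My plan is to establish the cycle of implications
\[
\text{(i) or (ii)} \;\Longrightarrow\; \text{(iv)} \;\Longrightarrow\; \text{(iii)} \;\Longrightarrow\; \text{(i) and (ii)},
\]
the convexity of $G$ in $p$ being used only at the middle step.

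First, for $\text{(i)}\Rightarrow\text{(iv)}$ and $\text{(ii)}\Rightarrow\text{(iv)}$, I would invoke Rademacher's theorem: since $u\in\Lip(M,\R)$, it is differentiable almost everywhere on $M$. At any differentiability point $x$ we have $D^+u(x)=D^-u(x)=\{d_x u\}$, so applying either (i) or (ii) at such $x$ immediately yields $G(u(x),x,d_x u)\leq 0$, which is (iv).

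Next, for $\text{(iv)}\Rightarrow\text{(iii)}$, I would rely on the classical characterization of the Clarke generalized gradient of a Lipschitz function: $\partial^c u(x)$ is the closed convex hull of the set of all limits $\lim_n d_{x_n} u$, where $x_n\to x$ ranges over sequences of differentiability points of $u$. Fix such a limit $p=\lim_n d_{x_n} u$. By (iv), possibly after shifting the $x_n$ to arbitrarily close differentiability points (using that $u$ is differentiable a.e.\ in any neighborhood of $x$), we have $G(u(x_n),x_n,d_{x_n} u)\leq 0$; since $G$ and $u$ are continuous, passing to the limit in $n$ gives $G(u(x),x,p)\leq 0$. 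By the convexity hypothesis on $G(u(x),x,\cdot)$, the inequality persists under convex combinations, and by continuity of $G$ in $p$ it persists under closure. Hence $G(u(x),x,q)\leq 0$ for every $q\in\partial^c u(x)$ and every $x\in M$.

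Finally, for $\text{(iii)}\Rightarrow\text{(i)}$ and $\text{(iii)}\Rightarrow\text{(ii)}$, I would use the inclusions $D^+u(x)\cup D^-u(x)\subseteq\partial^c u(x)$, valid for any Lipschitz function. For the subdifferential side, if $p\in D^-u(x)$ then for every direction $v$ the lower Dini derivative satisfies $\underline{d}u(x;v)\geq p(v)$, and this is dominated by the Clarke directional derivative $u^\circ(x;v)$, so $p\in\partial^c u(x)$. For the superdifferential side, one applies the same argument to $-u$ together with the identity $\partial^c(-u)=-\partial^c u$.

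The main obstacle I anticipate is purely bookkeeping: nailing down the definition of $\partial^c u$ in the manifold setting and justifying both the gradient-limit description used in step two and the inclusions $D^\pm u(x)\subseteq\partial^c u(x)$ used in step three. These are standard properties of locally Lipschitz functions (see \cite{CaSi00} or the references cited just before the statement), so once they are quoted the argument becomes essentially the three short steps above.
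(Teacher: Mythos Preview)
The paper does not give its own proof of this proposition; it simply cites it from the literature (``see any of the references \cite{bardi,barles,FathiSurvey,Sic}''). Your sketch is the standard argument and is correct: the implications $(\mathrm{i})\Rightarrow(\mathrm{iv})$ and $(\mathrm{ii})\Rightarrow(\mathrm{iv})$ via Rademacher, $(\mathrm{iv})\Rightarrow(\mathrm{iii})$ via the reachable-gradient description of $\partial^c u$ plus convexity in $p$, and $(\mathrm{iii})\Rightarrow(\mathrm{i}),(\mathrm{ii})$ via $D^\pm u(x)\subseteq\partial^c u(x)$, are exactly how this is done in the references. The one point you flag---that in $(\mathrm{iv})\Rightarrow(\mathrm{iii})$ the a.e.\ set might miss some differentiability points---is genuinely handled by the fact that $\partial^* u(x)$ (and hence $\partial^c u(x)$) is unchanged if one discards any Lebesgue-null set of differentiability points; this is a standard result in Clarke's theory and is implicit in the paper's Appendix~A.1 setup.
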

In (iii), we have used the Clarke derivative $\partial^c u(x)$  of the Lipschitz function $u$ at the point $x$. For definition and properties of the Clarke derivative see \S \ref{ClarkeDer}.

\section{Critical and discounted Hamilton--Jacobi equations}\label{sez discount equations}

In this paper, we will consider  a continuous function $H:T^*M\to\R$,
called the {\emph Hamiltonian}\/, satisfying the following assumptions:\medskip 
\begin{itemize}
\item[(H1)] (Convexity)  For every  $x\in M$, the map $p\mapsto H(x,p)$ is  convex 
on $T_x^* M$.
\item[(H2)] (Coercivity) $H(x,p)\to +\infty$ as $\|p\|_x\to +\infty$ uniformly in  $x\in M$.
\end{itemize}
The coercivity condition will be actually reinforced as follows:\medskip
\begin{itemize}
 \item[(H2$'$)] (Superlinearity) $H(x,p)/\|p\|_x\to +\infty$ as $\|p\|_x\to +\infty$
uniformly in $x\in M$.
\end{itemize}\medskip
A Hamiltonian $H$ satisfying (H1)--(H2$'$) will be furthermore termed {\em Tonelli}
if it is of class {\rm C}$^2$ on $T^*M$, and ${\partial^2 H}/{\partial p^2}(x,p)$ is a
strictly positive bilinear form for every $(x,p)\in T^*M$.

We will show below that, for our study, we can always reduce to the case of a
superlinear Hamiltonian, without any loss of generality. 

Conditions (H2) and (H2$'$) are given in terms of the norm $\|\cdot\|_x$ associated
with the Riemannian metric, but they do not actually depend on the particular choice
of it for all Riemannian metrics are equivalent on a compact manifold.
\smallskip    
For $c\in \R$,  we will consider the Hamilton-Jacobi equation 
 \begin{equation}\label{eq hja}
H(x,d_x u)=c.
\end{equation}
Notice that any given {\rm C}$^1$ function $u:M\to \R$ is a subsolution 
(resp.\ supersolution) of $H(x,d_x u)=c$ provided $c\geq \max_{x\in M}H(x,d_xu)$
(resp.\ $c\leq \min_{x\in M}H(x,d_xu)$). Moreover, the convexity and coercivity of $H$
allow to give the following characterization of viscosity subsolutions of \eqref{eq hja}, 
see any of the references  \cite{ bardi,barles,FathiSurvey,Sic}. 

\begin{prop}\label{COMPBASIQUE} Given a convex coercive Hamiltonian $H$ on the compact manifold $M$, and $c\in \R$, a function $u:M\to \R$ is a viscosity subsolution of the Hamilton-Jacobi equation $H(x,d_x u)=c$ if and only if $u$ is Lipschitz, and satisfies $H(x,d_xu)\leq c$, for almost every $x\in M$.

Moreover, for any fixed $c\in \R$, the set of viscosity subsolutions of $H(x,d_xu)=c$ is equi-Lipschitz  with a common Lipschitz constant $\kappa_c$ given by
\begin{equation}\label{kappabeta}
\kappa_c=\sup\{\lVert p\rVert_x\mid H(x,p)\leq c\}.
\end{equation}
\end{prop}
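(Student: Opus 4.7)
The plan is to establish both parts of the proposition by reducing the nontrivial direction to the equivalence recorded in Proposition \ref{prop subsol equivalent def}. First I would check that $\kappa_c$ is finite: by the coercivity assumption (H2), there exists $R>0$ such that $H(x,p) > c$ whenever $\|p\|_x > R$ for all $x \in M$, so $\kappa_c \leq R < \infty$.

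For the forward direction, suppose $u$ is a viscosity subsolution of $H(x,d_x u)=c$. At any $y \in M$ and every $p \in D^+ u(y)$, the subsolution inequality gives $H(y,p) \leq c$, hence $\|p\|_y \leq \kappa_c$. The key step is to promote this pointwise bound on superdifferentials to a global Lipschitz estimate with constant $\kappa_c$. I would do this by a sup-convolution regularization in local charts: $u_\eps(x) = \sup_y \bigl(u(y) - |x-y|^2/\eps\bigr)$ is semiconvex and a viscosity subsolution of $H(x,d u_\eps) \leq c + o(1)$ as $\eps \to 0$; the uniform superdifferential bound, combined with semiconvexity, forces $u_\eps$ to be Lipschitz with constant $\kappa_{c+o(1)}$, and passing to the limit yields that $u$ is $\kappa_c$--Lipschitz. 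Rademacher's theorem then provides differentiability almost everywhere; since $d_y u \in D^+u(y)$ at points of differentiability, the subsolution inequality gives $H(y,d_y u) \leq c$ a.e. Conversely, if $u$ is Lipschitz with $H(y,d_y u)\leq c$ a.e., then applying Proposition \ref{prop subsol equivalent def} to $G(u,x,p) = H(x,p) - c$ (which is convex in $p$ by (H1)) in the direction (iv) $\Rightarrow$ (i) gives that $u$ is a viscosity subsolution.

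This also settles the equi-Lipschitz assertion: each subsolution $u$ satisfies $\|d_y u\|_y \leq \kappa_c$ almost everywhere, which on the compact Riemannian manifold $M$ immediately yields the Lipschitz constant $\kappa_c$ by integrating along minimizing geodesics. The main obstacle is the passage from the pointwise superdifferential bound to global Lipschitz continuity: while standard in viscosity solution theory, it relies crucially on the coercivity of $H$ and requires either a regularization argument or a density argument exploiting that $\{y\in M : D^+u(y)\neq\emptyset\}$ is dense in $M$. Convexity of $H$ is not needed for the Lipschitz regularity itself but enters in the converse direction through Proposition \ref{prop subsol equivalent def}.
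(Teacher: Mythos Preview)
The paper does not supply a proof of this proposition; it simply refers the reader to the standard references \cite{bardi,barles,FathiSurvey,Sic} for the characterization. Your outline is a faithful reconstruction of the arguments found there: the finiteness of $\kappa_c$ from coercivity, the sup-convolution (or density-of-superdifferentials) route to Lipschitz regularity, Rademacher for the a.e.\ inequality, and the appeal to Proposition \ref{prop subsol equivalent def} for the converse are all correct, and you have correctly isolated convexity as entering only in the converse direction. There is nothing to compare against in the paper itself beyond noting that your sketch is consistent with what those references contain.
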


We define the {\em critical value} $c(H)$ as
\begin{equation}\label{ManeCrit}
c(H)=\inf\{a\in\R\mid \text{equation \eqref{eq hja} admits
subsolutions}\}.
\end{equation}
By the Ascoli--Arzel\`a Theorem and the stability of the notion of viscosity subsolution, it is easily seen that 
such an infimum is attained, meaning that there are subsolutions also at the critical level. Moreover, $c(H)$ is the only real value $c$ for which equation \eqref{eq hja} admits solutions.

Here and in the sequel, we will assume $c(H)=0$. This is not restrictive, since we can always reduce to this case by possibly replacing $H$ with $H-c(H)$. We will hence refer to 
\begin{equation}\label{eq critical}
H(x,d_x u)=0
\end{equation}
as the {\em critical equation}. Correspondingly,  solutions, subsolutions and supersolutions to \eqref{eq critical} will be termed {\em critical} in the sequel. 

We will be also interested in the discounted version of \eqref{eq critical}, that is the equation 
\begin{equation}\label{eq discounted}
 \lambda u(x)+H(x,d_x u)=0,
\end{equation}
where $\lambda>0$. The following holds: 

\begin{prop}\label{prop a.e. subsolution}
Let $\lambda\geq 0$. Then any subsolution of \eqref{eq discounted} is Lipschitz--continuous and satisfies 
\begin{equation}\label{ineq a.e. subsolution}
\lambda w(x)+H(x,d_x w)\leq 0, \text{ for a.e. $x\in M$.} 
\end{equation}
\end{prop}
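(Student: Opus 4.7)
The strategy is to combine Propositions \ref{COMPBASIQUE} and \ref{prop subsol equivalent def}: first absorb the discount term into a constant right-hand side so as to deduce that $w$ is Lipschitz, and then translate the viscosity condition into the pointwise a.e.\ form. The only subtlety is that the would-be right-hand side $-\lambda w(x)$ depends on the unknown function itself; but since $w$ is continuous on the compact manifold $M$ it is automatically bounded, and this bound is precisely what is needed to invoke Proposition \ref{COMPBASIQUE}.

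For the Lipschitz step, I would set $C:=\|w\|_\infty<\infty$ and observe that the viscosity subsolution condition for $\lambda u + H(x,d_x u) = 0$ gives, for every $x\in M$ and every $p\in D^+w(x)$,
\[
H(x,p)\leq -\lambda w(x)\leq \lambda C.
\]
Thus $w$ is a viscosity subsolution of the non-discounted Hamilton--Jacobi equation $H(x,d_x u)=\lambda C$, and Proposition \ref{COMPBASIQUE} then forces $w$ to be Lipschitz with constant $\kappa_{\lambda C}$ given by \eqref{kappabeta}. (When $\lambda=0$ this first step already \emph{is} the statement of Proposition \ref{COMPBASIQUE}, and there is nothing further to prove.)

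For the a.e.\ step, I would apply Proposition \ref{prop subsol equivalent def} to $G(r,x,p):=\lambda r+H(x,p)$; this $G$ is continuous on $\R\times T^*M$ and $p\mapsto G(r,x,p)$ is convex because $H(x,\cdot)$ is. Since $w$ is now known to be Lipschitz and satisfies condition (i) of that proposition by hypothesis, the equivalence immediately yields condition (iv), namely
\[
\lambda w(x)+H(x,d_xw)\leq 0\qquad\text{for a.e.\ }x\in M,
\]
as desired. A more hands-on alternative bypasses Proposition \ref{prop subsol equivalent def}: at any point of differentiability of $w$ one has $D^+w(x)=\{d_xw\}$, so combining Rademacher's theorem with the viscosity subsolution definition directly reduces to condition (iv). Either way, no substantive obstacle appears once the Lipschitz bound has been extracted from the compactness of $M$.
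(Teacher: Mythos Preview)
Your proof is correct and follows essentially the same route as the paper: bound $H(x,p)$ by $\|\lambda w\|_\infty$ using the compactness of $M$, invoke coercivity (the paper cites \cite{barles}, you cite Proposition~\ref{COMPBASIQUE}) to get Lipschitz continuity, and then use Rademacher at differentiability points to obtain the a.e.\ inequality. The paper's proof is precisely your ``hands-on alternative'' rather than the route through Proposition~\ref{prop subsol equivalent def}, but the substance is identical.
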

\begin{proof}
A subsolution $w$ of \eqref{eq discounted} satisfies 
\[
 H(x,d_x w)\leq \|\lambda w\|_\infty\qquad\hbox{in $M$}
\]
in the viscosity sense, hence it is Lipschitz continuous by the coercivity of $H$, see \cite{barles}. In particular, it satisfies the inequality \eqref{ineq a.e. subsolution} at every differentiability point, i.e. almost everywhere by Rademacher's theorem.
\end{proof}
The crucial difference between the critical equation \eqref{eq critical} and the discounted equation \eqref{eq discounted} with $\lambda>0$ is that the latter satisfies a strong comparison principle. In fact, we have

\begin{teorema}\label{teo comparison discounted eq}
Let $\lambda>0$. If $v,\,u$ are a sub and a supersolution of \eqref{eq discounted}, respectively, then $v\leq u$ in $M$. Moreover, there exists a unique solution of \eqref{eq discounted}.
\end{teorema}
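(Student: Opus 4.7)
The theorem packages three claims: a strong comparison principle, existence, and uniqueness; the last is an immediate consequence of the first. The plan is to prove comparison by doubling of variables and existence by Perron's method.

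\emph{Comparison.} Assume for contradiction that $m:=\max_M(v-u)>0$. For $\varepsilon>0$ small, let $(x_\varepsilon,y_\varepsilon)$ be a maximum point on the compact $M\times M$ of
\[
\Phi_\varepsilon(x,y):=v(x)-u(y)-\frac{d(x,y)^2}{2\varepsilon}.
\]
Standard penalty estimates give $d(x_\varepsilon,y_\varepsilon)\to 0$, $d(x_\varepsilon,y_\varepsilon)^2/\varepsilon\to 0$, and, along a subsequence, $v(x_\varepsilon)-u(y_\varepsilon)\to m$ with $x_\varepsilon,y_\varepsilon\to \bar x$. Because $d^2$ is smooth near the diagonal, for small $\varepsilon$ the functions $x\mapsto d(x,y_\varepsilon)^2/(2\varepsilon)$ and $y\mapsto -d(x_\varepsilon,y)^2/(2\varepsilon)$ provide a smooth supertangent $\phi_1$ of $v$ at $x_\varepsilon$ and a smooth subtangent $\phi_2$ of $u$ at $y_\varepsilon$, giving covectors $p_\varepsilon:=d_{x_\varepsilon}\phi_1\in D^+v(x_\varepsilon)$ and $q_\varepsilon:=d_{y_\varepsilon}\phi_2\in D^-u(y_\varepsilon)$. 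The first variation formula for $d^2$ identifies $p_\varepsilon$ and $q_\varepsilon$ as the duals of a pair of tangent vectors parallel-transported into one another along the minimizing geodesic from $x_\varepsilon$ to $y_\varepsilon$, with common norm $d(x_\varepsilon,y_\varepsilon)/\varepsilon$. By Proposition \ref{COMPBASIQUE} applied with $c=\|\lambda v\|_\infty$, the subsolution $v$ is $\kappa_c$-Lipschitz, and the subdifferential inclusion forces $\|p_\varepsilon\|\leq\kappa_c$; hence $(x_\varepsilon,p_\varepsilon)$ and $(y_\varepsilon,q_\varepsilon)$ both accumulate at the same limit $(\bar x,\bar p)\in T^*M$.

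Subtracting the viscosity inequalities
\[
\lambda v(x_\varepsilon)+H(x_\varepsilon,p_\varepsilon)\leq 0,\qquad \lambda u(y_\varepsilon)+H(y_\varepsilon,q_\varepsilon)\geq 0
\]
yields $\lambda\bigl(v(x_\varepsilon)-u(y_\varepsilon)\bigr)\leq H(y_\varepsilon,q_\varepsilon)-H(x_\varepsilon,p_\varepsilon)$. The right-hand side tends to $0$ by continuity of $H$, the left-hand side to $\lambda m>0$, a contradiction. Uniqueness of solutions follows at once by applying this comparison to two solutions in both orders.

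\emph{Existence.} Since $M$ is compact, $H(\cdot,0)$ is bounded; pick $C>0$ with $|H(x,0)|\leq C$ for all $x\in M$. Then the constants $w_-\equiv -C/\lambda$ and $w_+\equiv C/\lambda$ are respectively a classical subsolution and supersolution of \eqref{eq discounted}. The plan is Perron's method: set
\[
u_\lambda(x):=\sup\{\,w(x)\mid w\text{ is a subsolution of }\eqref{eq discounted}\text{ with }w\leq w_+\,\}.
\]
By the comparison step every subsolution automatically satisfies $w\leq w_+$, and by Proposition \ref{COMPBASIQUE} the whole class of subsolutions is equi-Lipschitz, so $u_\lambda$ is Lipschitz continuous (hence equal to its own upper and lower semicontinuous envelopes). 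The standard Perron argument (the upper envelope is a subsolution; were it to fail supersolution at some point, a small bump of a smooth strict subsolution would contradict maximality) then shows that $u_\lambda$ is a solution. The main obstacle is not conceptual but Riemannian-geometric: carefully identifying $p_\varepsilon$ and $q_\varepsilon$ via parallel transport along the short geodesic joining $x_\varepsilon$ to $y_\varepsilon$, so that $(x_\varepsilon,p_\varepsilon)$ and $(y_\varepsilon,q_\varepsilon)$ share a limit in $T^*M$. Once this is in place, continuity of $H$ handles the rest.
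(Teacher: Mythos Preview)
Your proof is correct, and it takes a genuinely different route from the paper's. The paper does \emph{not} use doubling of variables; instead it exploits the convexity hypothesis (H1) directly. It freezes the subsolution $v$ inside a new Hamiltonian $\hat H_{\lambda v}(x,p)=\lambda v(x)+H(x,p)$, then invokes its smooth-approximation machinery (Theorem~\ref{EncoreUneVariante}) to produce ${\rm C}^1$ functions $w_n\to v$ with $\lambda v(x)+H(x,d_xw_n)\leq 1/n$ everywhere. At a minimum $x_n$ of $u-w_n$, the supersolution inequality for $u$ with test function $w_n$ gives $\lambda u(x_n)+H(x_n,d_{x_n}w_n)\geq 0$; subtracting yields $\lambda(u(x_n)-v(x_n))\geq -1/n$, and letting $n\to\infty$ finishes. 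Existence is deferred to Perron's method, as you also do.

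The trade-offs: your argument is the classical one and does not use convexity of $H$ in $p$ at all, so it is strictly more general; the price is the Riemannian bookkeeping you flag (smoothness of $d^2$ near the diagonal, identifying $p_\varepsilon$ and $q_\varepsilon$ via parallel transport so that $H(y_\varepsilon,q_\varepsilon)-H(x_\varepsilon,p_\varepsilon)\to 0$). The paper's argument avoids that geometry entirely and is very short once the approximation theorem is available---which they need elsewhere anyway---but it genuinely requires convexity. Both are valid; the paper simply chose the proof that best fits the toolkit it is building.
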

This theorem  is well known, see for instance \cite{barles}. For the reader's convenience, we propose here a short proof by exploiting an approximation argument that works due to our assumption that  $H$ is convex  in $p$.

\begin{proof}[Proof of Theorem \ref{teo comparison discounted eq}]
We want to prove that $\min_M (u-v)\geq 0$.  We introduce the Hamiltonian $\hat H_{\lambda v}$ defined by
$$\hat H_{\lambda v}(x,p)=\lambda v(x)+H(x,p).$$
This Hamiltonian is continuous, convex, and coercive. Obviously $v$ is a subsolution of the equation
$$\hat H_{\lambda v}(x,d_xv)=0,$$
Therefore applying Theorem \ref{EncoreUneVariante}, for every integer $n\geq 1$, we can find a {\rm C}$^1$ function $w_n:M\to \R$ such that
\begin{equation}\label{eq comparison1}
\hat H_{\lambda v}(x,d_xw_n)= \lambda v(x)+H(x,d_x w_n)\leq \frac{1}{n}\text{ for every $x\in M$,} 
\end{equation}
and $w_n$ converges  uniformly to $v$ on $M$. This uniform convergence implies that  $\min_M (u-w_n)$ converges to $\min_M (u-v)$ as $n\to +\infty$. Let $x_n$ be a point in $M$ where $u-w_n$ attains its minimum. Then $w_n$ is a subtangent to $u$ at $x_n$. Since $u$ is a supersolution of \eqref{eq discounted} we have
\[
\lambda u(x_n)+H(x_n,d_{x_n} w_n)\geq 0,  
\]
and by subtracting inequality \eqref{eq comparison1} with $x=x_n$, we end up with
\[
-\frac{1}{n}
\leq
\lambda(u(x_n)-v(x_n))
\leq 
\lambda\min_M (u-w_n)+\lambda\|w_n-v\|_\infty.
\]
By sending $n$ to $+\infty$ we obtain the first assertion of the statement. This first part implies the uniqueness of the solution. The existence part follows by applying  Perron's method, see \cite{barles}, or the end of the proof of Theorem \ref{teo discounted solution}.

\end{proof}

Next, we show that the solutions of \eqref{eq discounted} are equi-Lipschitz. 

\begin{prop}\label{prop equi-Lipschitz sol} There exists a constant  $\kappa$ independent of $\lambda>0$, such that the solution 
$u_\lambda$ of \eqref{eq discounted} is Lipschitz with Lipschitz constant $\kappa$.  
\end{prop}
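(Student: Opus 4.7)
The plan is to reduce equi-Lipschitzianity to an $L^\infty$ estimate on $\lambda u_\lambda$ and then invoke the convex–coercive subsolution characterization (Proposition \ref{COMPBASIQUE}). Concretely, once I can show that $\|\lambda u_\lambda\|_\infty$ is bounded by a constant $C$ independent of $\lambda$, the equation $\lambda u_\lambda + H(x,d_xu_\lambda)=0$ rewrites as
\begin{equation*}
H(x,d_xu_\lambda)=-\lambda u_\lambda(x)\leq C,
\end{equation*}
in the viscosity sense, so $u_\lambda$ is a subsolution of $H(x,d_xu)=C$. Proposition \ref{COMPBASIQUE} then yields that $u_\lambda$ is Lipschitz with constant $\kappa:=\kappa_{C}=\sup\{\|p\|_x\mid H(x,p)\leq C\}$, which is finite by the coercivity assumption (H2) and independent of $\lambda$.

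The $L^\infty$ bound on $\lambda u_\lambda$ would follow from the comparison principle (Theorem \ref{teo comparison discounted eq}) applied to constant sub- and supersolutions. Set
\begin{equation*}
C:=\max_{x\in M}|H(x,0)|,
\end{equation*}
which is finite because $M$ is compact and $H$ is continuous. Then the constant function $v^+\equiv C/\lambda$ satisfies $\lambda v^++H(x,d_xv^+)=C+H(x,0)\geq 0$, so it is a (classical, hence viscosity) supersolution of \eqref{eq discounted}. Symmetrically, $v^-\equiv -C/\lambda$ is a subsolution. Theorem \ref{teo comparison discounted eq} gives $v^-\leq u_\lambda\leq v^+$, whence $\|\lambda u_\lambda\|_\infty\leq C$.

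Combining the two steps completes the proof. The only point that requires a bit of care is that the comparison principle is stated for sub/supersolutions of \eqref{eq discounted}, with the right-hand side $0$; the constants $\pm C/\lambda$ do play that role against a genuine solution $u_\lambda$, and no regularity beyond continuity of $H$ is used. I do not expect any real obstacle: the argument is purely a comparison-with-constants argument, and the constant $\kappa$ depends only on $H$ through $\max_x|H(x,0)|$ and the coercivity modulus, not on $\lambda$.
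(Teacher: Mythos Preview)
Your argument is correct and essentially identical to the paper's: set $\beta=\max_{x\in M}|H(x,0)|$, compare with the constant subsolution $-\beta/\lambda$ to get $-\lambda u_\lambda\leq\beta$, and then invoke coercivity (your Proposition~\ref{COMPBASIQUE}) to obtain the $\lambda$-independent Lipschitz constant $\kappa_\beta$. The only cosmetic difference is that you also compare with the constant supersolution $C/\lambda$, which is harmless but unnecessary, since only the lower bound on $\lambda u_\lambda$ is needed to bound $H(x,d_xu_\lambda)$ from above.
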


\begin{proof}
We already know that $u_\lambda$ is Lipschitz. We want to prove that its Lipschitz constant can be chosen independent of $\lambda$.  
Let us set $\beta=\max_{x\in M}|H(x,0)|$. The function 
$w\equiv-\beta/\lambda$ is obviously a subsolution of \eqref{eq discounted}.
By Theorem \ref{teo comparison discounted eq}, 
we must have $\lambda u_\lambda(x)\geq -\beta$ for every $x\in M$. Hence, we get
\[
 H(x,d_xu_\lambda)\leq -\lambda u_\lambda(x) \leq \beta,\text{ for a.e. $x\in M$,}
\]
and $u_\lambda$ is $\kappa_\beta$--Lipschitz by coercivity of $H$, 
with $\kappa_\beta$ given by \eqref{kappabeta}. This proves the proposition 
with $\kappa=\kappa_\beta$.
\end{proof}
Note that $\beta=\max_{x\in M}|H(x,0)| \geq c(H)=0$. In fact, we know that there exists a solution $u:M\to \R$ of equation \eqref{eq critical}. At a minimum $x_0$ of $u$ the constant 
function $w\equiv u(x_0)$ is a subtangent therefore $H(x_0,0)\geq 0=c(H)$, which implies $\beta\geq 0$. This remark together with
Proposition \ref{prop equi-Lipschitz sol} above shows that not all of $H$ is relevant in order to study the discounted and critical equations. In particular, we may modify $H$ outside the compact set  $\{(x,p)\in T^*M\,:\,\|p\|_x\leq \kappa\}$, with $\kappa=\kappa_\beta$, to obtain a new Hamiltonian which is still continuous and convex, and satisfies the stronger growth condition (H2$'$). Since the $\kappa$--sublevel of the two Hamiltonians coincide, the solutions of the corresponding critical and discounted equations are the same. 

In the remainder of the paper, without any loss of generality, we will therefore assume that $H$ is superlinear in $p$, i.e. that satisfies condition (H2$'$).

\section{The discounted value function}\label{sez discounted value function}
We recall that we are assuming $c(H)=0$, and also that $H$ satisfies condition (H2$'$). We will denote by $L: TM\to \R$ the associated Lagrangian,
see section (\ref{sez Lagrangian}).

For every $\lambda>0$, we define the discounted value function $u_\lambda:M\to\R$ by
\begin{equation}\label{defdiscvar}
 u_\lambda(x)=\inf_{\gamma}\int_{-\infty}^0 \e^{\lambda s} L(\gamma(s),\dot\gamma(s))\,d s,
\end{equation}
where the infimum is taken over all absolutely continuous curves $\gamma:]-\infty,0]\to M$,
with $\gamma(0)=x$.

In this section we will show that $u_\lambda$ defined as above is indeed the unique viscosity solution of the discounted Hamilton--Jacobi equation 
\begin{equation}\label{disc eq}
\lambda u_\lambda + H(x,d_xu_\lambda)=0\quad\hbox{in $M$.} 
\end{equation}

First we derive some crucial information  about the function $u_\lambda$ defined 
by the variational formula \eqref{defdiscvar}.

\begin{prop}\label{properties discounted value function}The function $u_\lambda$ defined by
\eqref{defdiscvar} satisfies the following properties:
\begin{itemize}
 \item[\rm (i)] For every $\lambda>0$
\[
 \frac{\min_{TM} L}{\lambda} \leq u_\lambda(x) \leq \frac{ L(x,0)}{\lambda}\qquad\hbox{for every $x\in M$}.
\]
In particular,  $\|\lambda u_\lambda\|_\infty\leq C_0$ for some positive constant $C_0$ independent of 
$\lambda>0$. \\
% 
% exists a positive constant $M$, independent of $\lambda>0$, such that 
% \[
%  \|\lambda u_\lambda\|_\infty\leq M\qquad\hbox{for every $\lambda>0$.}
% \]
\item[\rm (ii)] For every absolutely continuous curve $\gamma:[a,b]\to M$, we have 
\begin{equation}\label{eq lambda-dominated}
\e^{\lambda b}u_\lambda(\gamma(b))-\e^{\lambda a}u_\lambda(\gamma(a))
\leq
\int_{a}^b \e^{\lambda s} L(\gamma(s),\dot\gamma(s))\,d s.\medskip
\end{equation}
\item[\rm (iii)] There exists a positive constant $\kappa$, independent of $\lambda>0$, such that 
\[
 u_\lambda(x)-u_\lambda (y) \leq \kappa d(x,y)\qquad\hbox{for every $x,y\in M$ and $\lambda>0$,}
\]
that is, the functions $\{u_\lambda\,:\,\lambda>0\,\}$ are equi-Lipschitz.\\
\end{itemize}

\begin{proof}\ 
In (i),  the first inequality comes from the fact that every absolutely continuous curve $\gamma:(-\infty,0]\to M$ satisfies  
\[
 \int_{-\infty}^0 \e^{\lambda s} L(\gamma(s),\dot\gamma(s))\,d s 
\geq 
\left(\min_{TM} L\right)\int_{-\infty}^0 \e^{\lambda s}\,d s
=
\frac{\min_{TM} L}{\lambda}.
\]
The second inequality follows by choosing, as a competitor, the steady curve identically equal to the point $x$.\medskip

To prove (ii), we first note that we can assume $b=0$, since we can always reduce to this case by replacing $\gamma$ with the curve $\gamma_{-b}(\cdot):=\gamma(\cdot+b)$ defined on  the interval $[a-b,0]$ and by dividing \eqref{eq lambda-dominated} by $\e^{\lambda\,b}$. Note that a change of variables gives
\[
\int_{a}^b \e^{\lambda s} L(\gamma(s),\dot\gamma(s))\,d s
=\e^{\lambda b} \int_{a-b}^0 \e^{\lambda s} L(\gamma_{-b}(s),\dot\gamma_{-b}(s))\,d s.
\]
So, let $\gamma\in\D{AC}\left([a,0];M\right)$ be fixed. For every absolutely continuous curve $\xi:(-\infty,0]\to M$ with $\xi(0)=\gamma(a)$, we define a curve $\xi_a:(-\infty,a]\to M$ by setting $\xi_a(\cdot):=\xi(\cdot-a)$ and a curve $\eta:=\xi_a\star\gamma:(-\infty,0]\to M$ obtained by concatenation of $\xi_a$ and $\gamma$. By definition of $u_\lambda$ and arguing as above we get:
\begin{eqnarray*}
u_\lambda(\gamma(0))
&\leq&
\int_{-\infty}^0 \e^{\lambda s} L(\eta,\dot\eta)\,d s
=
\int_{-\infty}^a \e^{\lambda s} L(\xi_a,\dot\xi_a)\,d s
+
\int_{a}^0 \e^{\lambda s} L(\gamma,\dot\gamma)\,d s\\
&=&
\e^{\lambda a} \int_{-\infty}^0 \e^{\lambda s} L(\xi,\dot\xi)\,d s
+
\int_{a}^0 \e^{\lambda s} L(\gamma,\dot\gamma)\,d s.
\end{eqnarray*}
By minimizing with respect to all $\xi\in\D{AC}\left((-\infty,0];M\right)$ with $\xi(0)=\gamma(a)$ we get the assertion by definition of $u_\lambda(\gamma(a))$.\medskip

To prove (iii), pick $x,\,y\in M$ and let $\gamma:[-d(x,y),0]\to M$ be the 
geodesic joining $y$ to $x$ parameterized by the arc--length.
According to item (ii), we have 
\[
u_\lambda(x)-u_\lambda(y)
\leq 
(-u_\lambda(y))\left(1-\e^{-\lambda d(x,y)}\right)+\int^{0}_{-d(x,y)}
 \e^{\lambda s} L(\gamma(s),\dot\gamma(s))\,d s.
\]
Let $C_1:=\max\left\{ L(z,v)\,:\,z\in M,\,\|v\|_z\leq 1\,\right\}$ and $C_0$ 
the constant given by item {(i)}. We get
\[
u_\lambda(x)-u_\lambda(y)
\leq 
(\|\lambda u_\lambda\|_\infty+C_1)\,\frac{1-\e^{-\lambda d(x,y)}}{\lambda}
\leq
(C_0+C_1)d(x,y),
\]
where, for the last inequality, we have used the fact that, by concavity, $1-\e^{-h}\leq h$ for every $h\in\R$. 
\end{proof}

\end{prop}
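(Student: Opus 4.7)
My plan is to prove the three items in order, using (ii) as the bridge between the variational formula and the Lipschitz estimate (iii), while (i) provides a uniform a priori bound on $\lambda u_\lambda$ that turns the exponential factors in (ii) into harmless constants.

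For (i), the lower bound follows by pulling the infimum of $L$ outside the integral: since $L$ is continuous and superlinear on the tangent bundle (with compact base), it attains a finite minimum, and
\[
\int_{-\infty}^0 \e^{\lambda s} L(\gamma,\dot\gamma)\,ds \geq (\min_{TM} L)\int_{-\infty}^0 \e^{\lambda s}\,ds = \frac{\min_{TM} L}{\lambda}
\]
for any competitor $\gamma$. The upper bound comes from testing with the constant curve $\gamma\equiv x$, whose cost is $L(x,0)/\lambda$. Since $x\mapsto L(x,0)$ is bounded on the compact manifold $M$, both bounds, multiplied by $\lambda$, yield a uniform constant $C_0$.

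For (ii), the key is a dynamic programming / concatenation argument. After reducing to $b=0$ by a time shift (noticing that the integral transforms with a factor $\e^{\lambda b}$), we take any $\xi:(-\infty,0]\to M$ with $\xi(0)=\gamma(a)$, shift it to $\xi_a(\cdot)=\xi(\cdot-a)$ defined on $(-\infty,a]$, and concatenate with $\gamma$ to form an admissible competitor for $u_\lambda(\gamma(0))$. A change of variables gives
\[
\int_{-\infty}^a \e^{\lambda s} L(\xi_a,\dot\xi_a)\,ds = \e^{\lambda a}\int_{-\infty}^0 \e^{\lambda r} L(\xi,\dot\xi)\,dr,
\]
so taking the infimum over $\xi$ yields $u_\lambda(\gamma(0))\leq \e^{\lambda a}u_\lambda(\gamma(a)) + \int_a^0 \e^{\lambda s} L(\gamma,\dot\gamma)\,ds$, which is the required inequality.

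For (iii), I would apply (ii) to the arc-length parametrized geodesic $\gamma:[-d(x,y),0]\to M$ joining $y$ to $x$. Setting $C_1:=\max\{L(z,v):\|v\|_z\leq 1\}$ bounds the integral on the right by $C_1(1-\e^{-\lambda d(x,y)})/\lambda$; grouping terms gives
\[
u_\lambda(x)-u_\lambda(y)\leq \bigl(\|\lambda u_\lambda\|_\infty + C_1\bigr)\frac{1-\e^{-\lambda d(x,y)}}{\lambda},
\]
and the elementary inequality $1-\e^{-h}\leq h$ together with the uniform bound $\|\lambda u_\lambda\|_\infty\leq C_0$ from (i) produces a Lipschitz constant $\kappa=C_0+C_1$ independent of $\lambda$. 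The only point requiring care is making sure the estimates in (ii) and (iii) use constants from (i) that do not depend on $\lambda$; the main obstacle, really, is organizational rather than conceptual—ensuring the time-shift in (ii) is handled cleanly so that the factor $\e^{\lambda a}$ emerges naturally.
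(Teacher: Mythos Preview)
Your proposal is correct and follows essentially the same approach as the paper: the bounds in (i) come from the trivial lower bound and the constant-curve competitor, (ii) is proved by reducing to $b=0$ via a time shift and then concatenating with an arbitrary competitor for $u_\lambda(\gamma(a))$, and (iii) applies (ii) to an arc-length geodesic together with the bound from (i) and the inequality $1-\e^{-h}\leq h$, yielding the same Lipschitz constant $\kappa=C_0+C_1$.
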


Next, we prove that the discounted value function satisfies the Dynamical Programming Principle.

\begin{prop}\label{prop dynamical programming principle}
Let $\lambda>0$. For every $x\in M$ and $t>0$
\begin{equation}\label{eq dynamical programming principle}
u_\lambda(x)=\inf_{\gamma(0)=x}\left\{\e^{-\lambda t}\,u_\lambda(\gamma(-t))+ \int_{-t}^0 \e^{\lambda s} L(\gamma,\dot\gamma)\,d s\,:\,\gamma\in\D{AC}\left([-t,0];M\right)\,\right\}.
\end{equation}
Moreover, the above infimum is attained. 
\end{prop}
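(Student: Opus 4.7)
The plan is to prove the two inequalities separately and then handle attainment by the direct method.

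For the inequality ``$\leq$'', the proof is immediate from item (ii) of Proposition \ref{properties discounted value function}. Indeed, given any absolutely continuous curve $\gamma:[-t,0]\to M$ with $\gamma(0)=x$, applying inequality \eqref{eq lambda-dominated} with $a=-t$ and $b=0$ yields
\[
u_\lambda(x)\leq \e^{-\lambda t}u_\lambda(\gamma(-t))+\int_{-t}^0 \e^{\lambda s} L(\gamma,\dot\gamma)\,ds,
\]
and taking the infimum over admissible $\gamma$ gives the bound.

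For the inequality ``$\geq$'', I would argue by near-optimality in the variational formula \eqref{defdiscvar}. Fix $\varepsilon>0$ and pick an absolutely continuous $\eta:(-\infty,0]\to M$ with $\eta(0)=x$ such that
\[
\int_{-\infty}^0 \e^{\lambda s} L(\eta,\dot\eta)\,ds\leq u_\lambda(x)+\varepsilon.
\]
Splitting the integral at $-t$ and making the change of variable $\sigma=s+t$ in the first piece gives
\[
\int_{-\infty}^{-t}\e^{\lambda s} L(\eta,\dot\eta)\,ds=\e^{-\lambda t}\int_{-\infty}^{0}\e^{\lambda \sigma} L(\eta(\sigma-t),\dot\eta(\sigma-t))\,d\sigma\geq \e^{-\lambda t}u_\lambda(\eta(-t)),
\]
since the shifted curve $\sigma\mapsto \eta(\sigma-t)$ ends at $\eta(-t)$ and is admissible in the definition of $u_\lambda(\eta(-t))$. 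Setting $\gamma:=\eta\big|_{[-t,0]}$ we obtain
\[
u_\lambda(x)+\varepsilon\geq \e^{-\lambda t}u_\lambda(\gamma(-t))+\int_{-t}^0 \e^{\lambda s} L(\gamma,\dot\gamma)\,ds,
\]
which is $\geq$ the infimum in \eqref{eq dynamical programming principle}. Sending $\varepsilon\to 0$ gives the claim.

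The main obstacle is attainment of the infimum. I would use the direct method of the calculus of variations. Take a minimizing sequence $\gamma_n\in \D{AC}\left([-t,0];M\right)$ with $\gamma_n(0)=x$, so that the actions $A_n:=\int_{-t}^0 \e^{\lambda s} L(\gamma_n,\dot\gamma_n)\,ds$ are uniformly bounded (thanks to the previous inequality and item (i) of Proposition \ref{properties discounted value function}). The superlinearity assumption (H2$'$) transfers to $L$, so the boundedness of $A_n$ yields equi-integrability of $s\mapsto \|\dot\gamma_n(s)\|_{\gamma_n(s)}$ on $[-t,0]$; this gives equi-absolute continuity, and hence, by Ascoli--Arzel\`a applied on the compact manifold $M$, a subsequence converging uniformly to some absolutely continuous curve $\gamma$ with $\gamma(0)=x$. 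Then, by Tonelli's lower semicontinuity theorem for the action functional (which relies on convexity of $L$ in the velocity and is stated in the weak-KAM appendices of the paper for general continuous superlinear convex Lagrangians),
\[
\int_{-t}^0 \e^{\lambda s} L(\gamma,\dot\gamma)\,ds\leq \liminf_{n\to\infty} A_n.
\]
Combining with the continuity of $u_\lambda$ (so that $u_\lambda(\gamma_n(-t))\to u_\lambda(\gamma(-t))$), we see that $\gamma$ realizes the infimum in \eqref{eq dynamical programming principle}. The delicate point here is to invoke the correct lower semicontinuity result in the merely continuous setting, for which I would cite the appendix.
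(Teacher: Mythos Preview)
Your proof is correct and follows essentially the same route as the paper: the two inequalities are obtained exactly as you do (item (ii) of Proposition~\ref{properties discounted value function} for ``$\leq$'', and splitting a near-optimal curve at $-t$ with a change of variable for ``$\geq$''), and attainment is handled by bounding the actions of a minimizing sequence via $\|u_\lambda\|_\infty$ and invoking the Tonelli-type compactness/lower-semicontinuity result in the appendix (Theorem~\ref{teo tonelli}). The only cosmetic difference is that the paper cites Theorem~\ref{teo tonelli} directly rather than unpacking the equi-integrability argument behind it.
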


\begin{proof}
Fix $x\in M$ and $t>0$. 
By part (ii) of Proposition \ref{properties discounted value function} we immediately derive that $u_\lambda(x)$ is less or equal than the right--hand side term of \eqref{eq dynamical programming principle}. 

Let us prove the opposite inequality. Let $\eps>0$ and choose an absolutely continuous curve $\gamma:(-\infty,0]\to M$ with $\gamma(0)=x$ such that 
\[
u_\lambda(x)+\eps> \int_{-\infty}^0 \e^{\lambda s} L(\gamma(s),\dot\gamma(s))\,d s.
\]
Then 
\[
u_\lambda(x)+\eps
> 
\int_{-t}^0 \e^{\lambda s} L(\gamma(s),\dot\gamma(s))\,d s
+
\e^{-\lambda t}\int_{-\infty}^{-t} \e^{\lambda(s+t)} L(\gamma(s),\dot\gamma(s))\,d s.
\]
We now make a change of variables in the second integral. By setting 
$\xi(\cdot):=\gamma(\cdot-t)$ and by exploiting the definition of $u_\lambda$ we end up with
\begin{eqnarray*}
u_\lambda(x)+\eps
&>& 
\int_{-t}^0 \e^{\lambda s} L(\gamma(s),\dot\gamma(s))\,d s
+
\e^{-\lambda t}\int_{-\infty}^{0} \e^{\lambda s} L(\xi(s),\dot\xi(s))\,d s\\
&\geq&
\int_{-t}^0 \e^{\lambda s} L(\gamma(s),\dot\gamma(s))\,d s
+
\e^{-\lambda t} u_\lambda(\gamma(-t)).
\end{eqnarray*}
The desired inequality follows as $\eps>0$ was arbitrarily chosen.

To prove the last assertion, we take minimizing sequence $\gamma_n:[-t,0]\to M$ with $\gamma_n(0)=x$, i.e. such that
\[
 \lim_{n\to +\infty} \e^{-\lambda t}\,u_\lambda(\gamma_n(-t))
+ \int_{-t}^0 \e^{\lambda s} L(\gamma_n(s),\dot\gamma_n(s))\,d s
=
u_\lambda(x).
\]
For $n$ large enough, we have:
\begin{multline*}
\int_{-t}^0 e^{\lambda s} L\big( \gamma_n(s), \dot\gamma_n(s)\big) d s
\leq 1+u_\lambda\big(\gamma_n(0)\big) - e^{-\lambda t} u_\lambda\big(\gamma_n(-t)\big) 
\leq 
1+2\|u_\lambda\|_\infty.
 \end{multline*}
According to Theorem \ref{teo tonelli}, the curves $\gamma_n$ uniformly converge, up to subsequences, to an absolutely continuous curve 
$\gamma: [-t,0]\to M$ with $\gamma(0)=x$ and satisfying  
$$
\int_{-t}^0 e^{\lambda s} L\big( \gamma(s), \dot\gamma(s)\big) d s\leq \liminf_{n\to +\infty} \int_{-t}^0 e^{\lambda s} L\big( \gamma_n(s), \dot\gamma_n(s)\big) d s.
$$
This readily implies that $\gamma$ is a minimizer of \eqref{eq dynamical programming principle}.   
\end{proof}

We proceed to show that $u_\lambda$ is the unique solution of the discounted Hamilton-Jacobi equation \eqref{disc eq}. We prove a preliminary result first.

\begin{prop}\label{prop subsol characterization}
Let $u\in\D{C}(M)$. Then $u$ is a subsolution of \eqref{disc eq} if and only if
\begin{equation}\label{eq lambda dominated}
u(\gamma(0))-\e^{-\lambda t} u(\gamma(-t))\leq \int_{-t}^0 \e^{\lambda s} L(\gamma(s),\dot\gamma(s))\,d s
\end{equation}
for every curve $\gamma\in \D{AC}\left([-t,0];M\right)$ and every $t>0$.
\end{prop}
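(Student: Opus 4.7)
My plan is to prove the two implications separately. The forward direction (subsolution $\Rightarrow$ domination) is the harder one, and will use the approximation result \ref{EncoreUneVariante} (invoked in the proof of Theorem \ref{teo comparison discounted eq}), while the reverse direction will be a straightforward first-order expansion argument.

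\medskip

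\textbf{Reverse direction: domination $\Rightarrow$ subsolution.} First I would show that \eqref{eq lambda dominated} forces $u$ to be Lipschitz on $M$. Given $x,y\in M$ and the unit-speed geodesic $\gamma:[-d(x,y),0]\to M$ from $y$ to $x$, the inequality, together with the uniform bound on $L$ on the unit disk bundle $\{\|v\|_z\le 1\}$, gives an upper bound on $u(x)-u(y)$ linear in $d(x,y)$ (depending on $\lambda$ and $\|u\|_\infty$), exactly as in Proposition \ref{properties discounted value function}(iii). Next, at any differentiability point $x$ of $u$, fix $v\in T_xM$ and take any $C^1$ curve $\gamma:[-t,0]\to M$ with $\gamma(0)=x$, $\dot\gamma(0)=v$. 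Using $e^{-\lambda t}=1-\lambda t+o(t)$, $u(\gamma(-t))=u(x)-t\,d_xu(v)+o(t)$, and the continuity of $L$, expanding both sides of \eqref{eq lambda dominated} to first order in $t$ and dividing by $t$ as $t\to 0^+$ yields
\[
\lambda u(x)+d_xu(v)\leq L(x,v).
\]
Taking the supremum over $v\in T_xM$ and applying Fenchel's formula, we get $\lambda u(x)+H(x,d_xu)\leq 0$ a.e. on $M$. Since $\lambda u(\cdot)+H(\cdot,p)$ is convex in $p$ and $u$ is Lipschitz, Proposition \ref{prop subsol equivalent def} gives that $u$ is a viscosity subsolution of \eqref{disc eq}.

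\medskip

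\textbf{Forward direction: subsolution $\Rightarrow$ domination.} This is the key step. The natural computation, had $u$ been $C^1$, would be to differentiate $s\mapsto e^{\lambda s}u(\gamma(s))$ and use Fenchel's inequality $\lambda u(x)+d_xu(\dot\gamma)\le L(\gamma,\dot\gamma)$. To make this rigorous for a merely Lipschitz subsolution I would invoke the smoothing theorem (the one cited as \ref{EncoreUneVariante} in the proof of Theorem \ref{teo comparison discounted eq}) applied to the Hamiltonian $(x,p)\mapsto \lambda u(x)+H(x,p)$, of which $u$ itself is a subsolution. This produces a sequence $w_n\in C^1(M,\R)$ with $w_n\ucv u$ uniformly and
\[
\lambda u(x)+H(x,d_xw_n)\leq 1/n\qquad\text{for every }x\in M.
\]
Using the uniform convergence $w_n\to u$ and continuity of $H$, one deduces that also $\lambda w_n(x)+H(x,d_xw_n)\le\eps_n$ with $\eps_n\to 0$. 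By the Fenchel inequality $d_xw_n(v)\leq H(x,d_xw_n)+L(x,v)$, for every absolutely continuous $\gamma:[-t,0]\to M$ the Lipschitz function $s\mapsto e^{\lambda s}w_n(\gamma(s))$ satisfies, at a.e.\ $s$,
\[
\frac{d}{ds}\bigl[e^{\lambda s}w_n(\gamma(s))\bigr]=e^{\lambda s}\bigl[\lambda w_n(\gamma(s))+d_{\gamma(s)}w_n(\dot\gamma(s))\bigr]\leq e^{\lambda s}L(\gamma(s),\dot\gamma(s))+\eps_n e^{\lambda s}.
\]
Integrating over $[-t,0]$ gives
\[
w_n(\gamma(0))-e^{-\lambda t}w_n(\gamma(-t))\leq \int_{-t}^0 e^{\lambda s}L(\gamma(s),\dot\gamma(s))\,ds+\frac{\eps_n}{\lambda}(1-e^{-\lambda t}),
\]
and letting $n\to\infty$, using $w_n\to u$ uniformly, yields \eqref{eq lambda dominated}.

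\medskip

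The main obstacle, as always in weak KAM statements of this type, is precisely this last step: one cannot apply the chain rule directly to $u\circ\gamma$ because $u$ may fail to be differentiable along $\gamma$ even on a set of positive measure in $s$. The approximation by $C^1$ subsolutions (from \ref{EncoreUneVariante}) bypasses this issue cleanly, which is why the convexity of $H$ in $p$ is used essentially here.
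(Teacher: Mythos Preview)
Your proof is correct and follows essentially the same approach as the paper's: the forward direction uses the smoothing Theorem \ref{EncoreUneVariante} to get $C^1$ almost-subsolutions $w_n$, applies Fenchel's inequality, integrates the derivative of $e^{\lambda s}w_n(\gamma(s))$, and passes to the limit; the reverse direction establishes Lipschitz continuity and then differentiates \eqref{eq lambda dominated} at $t=0$ at a differentiability point of $u$. The only cosmetic difference is that the paper records the explicit error $(1+\lambda)/n$ where you write $\eps_n$.
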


\begin{proof}
Let us first assume that $u$ is a subsolution of \eqref{disc eq}.  By Proposition \ref{prop a.e. subsolution}, we know that $u$ is Lipschitz continuous. Like in the proof of Theorem \ref{teo comparison discounted eq}, applying Theorem \ref{EncoreUneVariante}, we can find a  sequence of {\rm C}$^1$ functions $u_n:M\to \R$  such that $\|u_n- u\|_\infty\leq 1/n$  and 
\begin{equation}
\lambda u(x) + H(x,d_x u_n)\leq \frac{1}{n},\text{ for every $x\in M$.}
\end{equation}
Therefore 
\begin{equation}\label{eq almost subsol}
\lambda u_n(x) + H(x,d_x u_n)\leq \frac{1+\lambda}{n},\text{ for every $x\in M$.}
\end{equation}

Pick a curve $\gamma\in\D{AC}\left([-t,0];M\right)$. By exploiting the Fenchel inequality \eqref{Fenchel inequality} of appendix B together with
\eqref{eq almost subsol}, we get
\begin{eqnarray*}
u_n(\gamma(0))&-&\e^{-\lambda t} u_n(\gamma(-t))
=
\int_{-t}^0 \frac{d}{d s}\left(\e^{\lambda s}u_n(\gamma(s))\right)\,d s\\
&=&
\int_{-t}^0 \e^{\lambda s}\Big(\lambda u_n(\gamma(s))+\langle d_{\gamma(s)}u_n,\,\dot\gamma(s)\rangle\Big)\,d s\\ 
&\leq&
\int_{-t}^0 \e^{\lambda s}\Big(\lambda u_n(\gamma(s))+H(\gamma(s),d_{\gamma(s)}u_n)+L(\gamma(s),\dot\gamma(s))\Big)\,d s\\
&\leq& 
\frac{1+\lambda}{n}\int_{-t}^0 \e^{\lambda s}\,ds+ \int_{-t}^0 \e^{\lambda s} L(\gamma(s),\dot\gamma(s))\,d s
\end{eqnarray*}
with the last inequality obtained from \eqref{eq almost subsol}. The assertion follows by sending $n$ to $+\infty$.

Conversely, let us assume that \eqref{eq lambda dominated} holds for any absolutely continuous curve. 
Arguing as in the proof of part (iii) of Proposition \ref{properties discounted value function} we see that $u$ is Lipschitz continuous. 
Let $x$ be a differentiability point of $u$. Pick a vector $v\in T_x M$ and let $\gamma\in\D C^1\left([-1,0];M\right)$ such that $(\gamma(0),\dot\gamma(0))=(x,v)$. Then, for every $t\in (0,1)$, we have
\[
\frac{u(\gamma(0))-\e^{-\lambda t}u(\gamma(-t))}{t}
\leq
\frac{1}{t}\int_{-t}^0 \e^{\lambda s} L(\gamma(s),\dot\gamma(s))\,d s,
\]
hence, letting $t\to0$,
\[
 \lambda u(x) +\langle d_x u,\, v\rangle - L(x,v) \leq 0.
\]
By taking the supremum of this inequality with respect to $v\in T_x M$ we conclude by Proposition \ref{prop known} that 
\[
 \lambda u(x) +H(x,d_x u)\leq 0 
\]
{for every differentiability} point $x$ of $u$, i.e. $u$ is a subsolution of \eqref{disc eq} by Proposition \ref{prop subsol equivalent def}.
\end{proof}

We are now ready to prove the announced result:

\begin{teorema}\label{teo discounted solution}
For every $\lambda>0$, the discounted value function $u_\lambda$ is the unique continuous viscosity solution of \eqref{disc eq}.
\end{teorema}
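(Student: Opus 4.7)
The plan. Uniqueness is immediate from the comparison principle (Theorem~\ref{teo comparison discounted eq}), so the content of the statement is that the variationally defined $u_\lambda$ is actually a viscosity solution of \eqref{disc eq}. Continuity (indeed Lipschitz regularity) has already been proved in Proposition~\ref{properties discounted value function}(iii); it thus suffices to establish the subsolution and supersolution inequalities separately.

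For the subsolution property, I would simply apply Proposition~\ref{properties discounted value function}(ii) with $a=-t$ and $b=0$, which gives, for every $t>0$ and every $\gamma\in\D{AC}([-t,0];M)$,
\[
u_\lambda(\gamma(0))-\e^{-\lambda t}u_\lambda(\gamma(-t))\leq \int_{-t}^0 \e^{\lambda s} L(\gamma(s),\dot\gamma(s))\,d s,
\]
which is exactly the integral condition \eqref{eq lambda dominated}. By the equivalence established in Proposition~\ref{prop subsol characterization}, $u_\lambda$ is a subsolution of \eqref{disc eq}.

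For the supersolution property, I would combine the Dynamical Programming Principle with the existence of minimizers (Proposition~\ref{prop dynamical programming principle}). Fix $x\in M$, let $\phi$ be a $C^1$ subtangent of $u_\lambda$ at $x$, and for each small $t>0$ select a minimizing curve $\gamma_t:[-t,0]\to M$ with $\gamma_t(0)=x$, so that
\[
u_\lambda(x)=\e^{-\lambda t}u_\lambda(\gamma_t(-t))+\int_{-t}^0 \e^{\lambda s}L(\gamma_t,\dot\gamma_t)\,d s.
\]
Using the subtangent inequality $u_\lambda(\gamma_t(-t))-u_\lambda(x)\geq \phi(\gamma_t(-t))-\phi(x)$, the identity $\phi(x)-\phi(\gamma_t(-t))=\int_{-t}^0\langle d_{\gamma_t(s)}\phi,\dot\gamma_t(s)\rangle\,d s$, and the Fenchel inequality $\langle p,v\rangle\leq L(x,v)+H(x,p)$, one rearranges the above into
\[
u_\lambda(x)\,\frac{1-\e^{-\lambda t}}{t}+\frac{\e^{-\lambda t}}{t}\int_{-t}^0 H(\gamma_t(s),d_{\gamma_t(s)}\phi)\,d s\ \geq\ \frac{1}{t}\int_{-t}^0 \bigl(\e^{\lambda s}-\e^{-\lambda t}\bigr)L(\gamma_t,\dot\gamma_t)\,d s.
\]

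The main obstacle is passing to the limit $t\to 0^+$, which demands uniform control of the minimizers $\gamma_t$. Combining the DPP equality with the uniform Lipschitz bound on $u_\lambda$ from Proposition~\ref{prop equi-Lipschitz sol}, I would first derive $\int_{-t}^0 L(\gamma_t,\dot\gamma_t)\,d s=O(t)$; the superlinearity assumption (H2$'$) then yields $\int_{-t}^0\|\dot\gamma_t(s)\|\,d s=O(t)$, so that $\gamma_t(s)\to x$ uniformly in $s\in[-t,0]$ as $t\to 0^+$. This legitimizes the subtangent inequality used above for $t$ small, makes the right-hand side vanish, and ensures $\frac{1}{t}\int_{-t}^0 H(\gamma_t(s),d_{\gamma_t(s)}\phi)\,d s\to H(x,d_x\phi)$ by continuity. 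Sending $t\to 0^+$ one obtains $\lambda u_\lambda(x)+H(x,d_x\phi)\geq 0$, so $u_\lambda$ is a supersolution of \eqref{disc eq} and hence, by Theorem~\ref{teo comparison discounted eq}, its unique continuous viscosity solution.
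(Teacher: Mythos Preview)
Your argument is correct. The subsolution half coincides with the paper's proof, but for the supersolution half you take a genuinely different route.

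The paper proves the supersolution property by first showing that $u_\lambda$ is the \emph{maximal} subsolution: given any subsolution $w$, it uses the minimizer $\gamma$ on $[-t,0]$ from Proposition~\ref{prop dynamical programming principle} together with Proposition~\ref{prop subsol characterization} to obtain
\[
u_\lambda(x)\geq w(x)+\e^{-\lambda t}\bigl(u_\lambda(\gamma(-t))-w(\gamma(-t))\bigr),
\]
and then sends $t\to+\infty$ (not $t\to 0^+$) so that the exponential kills the remainder. A standard Perron bump argument then converts maximality into the supersolution property. This route avoids any analysis of the short-time behavior of minimizers and yields, as a byproduct, the useful identification of $u_\lambda$ as the largest subsolution.

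Your route is the classical direct verification: test with a subtangent $\phi$ at $x$, plug the minimizing curve $\gamma_t$ into the DPP equality, use Fenchel, and let $t\to 0^+$. The price is the technical estimate $\int_{-t}^0\lVert\dot\gamma_t\rVert\,ds=O(t)$, which you obtain (correctly) by combining the DPP identity, the Lipschitz bound on $u_\lambda$, and the superlinearity of $L$; this then forces $\gamma_t(s)\to x$ uniformly, legitimizes the subtangent inequality, and makes both the Hamiltonian average converge and the $(\e^{\lambda s}-\e^{-\lambda t})L$ remainder vanish. Your approach is more elementary in that it does not invoke Perron's method, but it is more computational and does not single out the maximality of $u_\lambda$.
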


\begin{proof}
We already know that $u_\lambda$ is Lipschitz continuous on $M$. According to Propositions \ref{properties discounted value function}--{\em (ii)} and \ref{prop subsol characterization} we derive that $u_\lambda$ is a subsolution of \eqref{disc eq}.

In order to prove that $u_\lambda$ is a supersolution, we will show that it is maximal in the family of continuous subsolution of \eqref{disc eq}. Indeed, let $w\in \D{C}(M)$ be a subsolution of \eqref{disc eq} and pick a point $x$ in $M$. Fix $t>0$ and let $\gamma:[-t,0]\to M$ be a minimizer of \eqref{eq dynamical programming principle}. According to Proposition \ref{prop subsol characterization} we have
\begin{eqnarray*}
u_\lambda(x)
&=&
\e^{-\lambda t}u_\lambda(\gamma(-t))+\int_{-t}^0 \e^{\lambda s} L(\gamma(s),\dot\gamma(s))\,d s\\
&\geq&
w(x)+\e^{-\lambda t}\Big(u_\lambda(\gamma(-t))-w(\gamma(-t))\Big)
\geq
w(x)-\e^{-\lambda t}(\|u_\lambda\|_\infty+\|w\|_\infty).
\end{eqnarray*}
Sending $t\to +\infty$ we get \ $u_\lambda(x)\geq w(x)$, as it was claimed. 

A standard argument now implies that $u_\lambda$ is a supersolution. We will sketch it here for the reader's convenience. Were $u_\lambda$ not a supersolution, there would exist a {strict} subtangent $\phi$ to $u_\lambda$ at some point $y\in M$ with   $\phi(y)=u_\lambda(y)$ such that 
\[
\lambda \phi(y)+H(y,d_y\phi)<0.
\]
Then we can find $r>0$ and $\eps>0$ small enough so that the function $w$ defined as 
\[
 w=\max\{\phi+\eps, u_\lambda\,\}\quad\hbox{in $B_r(y)$}\qquad\hbox{and}\qquad w=u_\lambda\quad \hbox{elsewhere}
\]
is still a subsolution of \eqref{disc eq}. But this is in contradiction with the maximality of $u_\lambda$ since $w(y)>u_\lambda(y)$ by construction. 

The uniqueness part comes from Theorem \ref{teo comparison discounted eq}.
\end{proof}

As a consequence, we derive the following

\begin{cor}\label{cor discounted solution}
The functions $\{u_\lambda\,:\,\lambda>0\,\}$ are equi-Lipschitz and equi-bounded. In particular, $\|\lambda u_\lambda\|_\infty\to 0$ as $\lambda\to 0$.
\end{cor}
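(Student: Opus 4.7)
The plan is to assemble the corollary from pieces already at hand. First, the equi-Lipschitz statement requires no new work: Proposition \ref{properties discounted value function}(iii) already provides a common Lipschitz constant $\kappa$, independent of $\lambda$, for the family $\{u_\lambda\}_{\lambda>0}$.

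The only nontrivial point is equi-boundedness of $u_\lambda$ itself. Note that Proposition \ref{properties discounted value function}(i) alone gives only $\|\lambda u_\lambda\|_\infty \leq C_0$, which does not prevent $u_\lambda$ from blowing up as $\lambda\to 0$. To upgrade this, I would exploit the normalization $c(H)=0$. By the discussion following \eqref{ManeCrit}, the critical equation \eqref{eq critical} admits a genuine solution $w:M\to\R$; being continuous on the compact manifold $M$, it is bounded. Since adding constants preserves the sub/supersolution property for the critical equation, the functions
\[
\underline v := w - \max_M w \;\leq\; 0, \qquad \overline v := w - \min_M w \;\geq\; 0
\]
are still critical solutions. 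Since $\lambda\underline v \leq 0$ and $\lambda\overline v \geq 0$, these functions are respectively a subsolution and a supersolution of the discounted equation \eqref{disc eq}. Applying the comparison principle of Theorem \ref{teo comparison discounted eq} to $(\underline v,u_\lambda)$ and to $(u_\lambda,\overline v)$ then yields the sandwich
\[
w-\max_M w \;\leq\; u_\lambda \;\leq\; w-\min_M w \qquad\text{on } M,
\]
so $\|u_\lambda\|_\infty \leq 2\|w\|_\infty$ uniformly in $\lambda>0$.

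The final assertion is then cosmetic: from the uniform bound $K := \sup_{\lambda>0}\|u_\lambda\|_\infty < \infty$ just obtained, one has $\|\lambda u_\lambda\|_\infty \leq \lambda K \to 0$ as $\lambda\to 0^+$.

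There is no real obstacle here; the argument is a direct consequence of (a) the existence of a critical solution, which holds automatically because $c(H)$ is defined as the minimal level admitting subsolutions and this infimum is attained, and (b) the strong comparison principle of Theorem \ref{teo comparison discounted eq}. The only point worth emphasizing is that Proposition \ref{properties discounted value function}(i) is insufficient on its own, and the sharpening comes from comparing $u_\lambda$ against critical solutions rather than against constant functions.
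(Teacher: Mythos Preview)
Your proof is correct and follows essentially the same approach as the paper: both shift a critical solution by constants to obtain a nonpositive critical subsolution and a nonnegative critical supersolution of the discounted equation, then apply the comparison principle of Theorem \ref{teo comparison discounted eq} to sandwich $u_\lambda$. Your version is slightly more explicit about why the shifted functions are sub/supersolutions of \eqref{disc eq} and about deducing $\|\lambda u_\lambda\|_\infty\to 0$, but the argument is the same.
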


\begin{proof}
The equi-Lipschitz character of the functions $u_\lambda$ has been already proved in Proposition \ref{properties discounted value function}. To see they are equi-bounded, take a solution $u$ of \eqref{eq critical}--recall that we are assuming 
$c(H)=0$. By addition of suitable constants, we obtain two critical solutions $\underline u$, $\overline u$ of equation \eqref{eq critical} such that \ $\underline{u}\leq 0 \leq \overline u$ in $M$. It is easily seen that, for every fixed $\lambda>0$, $\underline u$ and $\overline u$ are, respectively, a sub and a supersolution of \eqref{eq discounted}. By the comparison principle stated in Theorem \ref{teo comparison discounted eq} we derive
\[
 \underline u \leq u_\lambda \leq \overline u\quad\hbox{in $M$}\qquad\hbox{for every $\lambda>0$,}
\]
as it was to be shown. 
\end{proof}
Here is now a first obvious case of convergence.
\begin{prop} Suppose that the constants are critical subsolutions, or equivalently that 
$L+c(H)\geq 0$, then $u_\lambda\geq 0$, and $u_\lambda \nearrow $ uniformly as $\lambda\searrow 0$ to some solution of the critical equation $H(x,d_xu)=c(H)$.
\end{prop}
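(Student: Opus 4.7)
The plan is to exploit the sign hypothesis $L \geq 0$ (using the normalization $c(H)=0$) directly in the variational formula \eqref{defdiscvar}: both the nonnegativity of $u_\lambda$ and its monotonicity in $\lambda$ will drop out immediately, and the identification of the limit will then follow from stability of viscosity solutions. No new ideas are needed beyond results already established in the paper.

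First I would observe that $u_\lambda \geq 0$: under the hypothesis every integrand $\e^{\lambda s} L(\gamma(s), \dot\gamma(s))$ in \eqref{defdiscvar} is pointwise nonnegative, hence so is the infimum. Next I would establish that $u_\lambda$ is pointwise decreasing in $\lambda$. Fix $0 < \lambda_1 < \lambda_2$. Since $s \leq 0$ we have $\e^{\lambda_1 s} \geq \e^{\lambda_2 s}$, and multiplying by the nonnegative integrand $L(\gamma,\dot\gamma)$ and integrating gives
\[
\int_{-\infty}^0 \e^{\lambda_1 s} L(\gamma,\dot\gamma)\, ds \;\geq\; \int_{-\infty}^0 \e^{\lambda_2 s} L(\gamma,\dot\gamma)\, ds
\]
for every admissible $\gamma$; taking the infimum yields $u_{\lambda_1}(x) \geq u_{\lambda_2}(x)$, so $u_\lambda$ indeed increases as $\lambda \searrow 0$.

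For the limit, Corollary \ref{cor discounted solution} tells us that $\{u_\lambda\}_{\lambda>0}$ is equi-bounded and equi-Lipschitz, so the pointwise supremum $u_0 := \sup_{\lambda > 0} u_\lambda = \lim_{\lambda\searrow 0} u_\lambda$ exists and is itself Lipschitz, in particular continuous. Dini's theorem then upgrades the monotone pointwise convergence of continuous functions to a continuous limit on the compact manifold $M$ to uniform convergence. Finally, since $\|\lambda u_\lambda\|_\infty \to 0$ (again by that corollary), writing the discounted equation as $H(x, d_x u_\lambda) = -\lambda u_\lambda$ and invoking stability of viscosity solutions under uniform convergence yields $H(x, d_x u_0) = 0 = c(H)$, as required.

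I do not anticipate any substantial obstacle here. The only point requiring a moment of care is the orientation of the monotonicity: one must combine $L \geq 0$ with the fact that $s \leq 0$ on the domain of integration to conclude that it is the regime $\lambda \searrow 0$ (rather than the opposite) that produces the increasing family.
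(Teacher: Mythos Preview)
Your proof is correct and follows essentially the same approach as the paper: both exploit the nonnegativity of $L$ in the variational formula \eqref{defdiscvar} to obtain $u_\lambda\geq 0$ and monotonicity in $\lambda$, then pass to the limit via the equi-Lipschitz/equi-bounded bounds of Corollary~\ref{cor discounted solution}. The only cosmetic difference is that you invoke Dini's theorem explicitly, whereas the paper phrases the conclusion via Ascoli--Arzel\`a and uniqueness of the accumulation point forced by monotonicity.
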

\begin{proof} Replacing $L$ by $L+c(H)$, we can assume $c(H)=0$. By Corollary \ref{cor discounted solution}, the family $(u_\lambda)_{\lambda>0}$
is equi-continuous and bounded. Moreover, any uniform accumulation point of $u_\lambda$ is a solution of equation \eqref{eq critical}. Therefore it suffices to show that $u_\lambda\geq 0$, and $u_\lambda \nearrow $, as $\lambda\searrow 0$. Since $L\geq 0$, for $\lambda'\geq \lambda$, and $\gamma:]-\infty, 0]\to M$, we have
$$0\leq \int_{-\infty}^0e^{\lambda's}L(\gamma(s),\dot\gamma(s))\, ds\leq  
\int_{-\infty}^0e^{\lambda s}L(\gamma(s),\dot\gamma(s))\, ds.$$
taking the infimum over all $\gamma:]-\infty, 0]\to M$, with $\gamma(0)=x$ yields
$0\leq  u_{\lambda'}(x)\leq  u_{\lambda}(x)$.
\end{proof}
We end this section by proving the existence of Lipschitz curves that realize the infimum in the definition of discounted value function. 

\begin{prop}\label{prop calibrating}
Let $\lambda>0$ and $x\in M$. Then there exists a curve $\gamma^\lambda_x:(-\infty,0]\to M$ with $\gamma^\lambda_x(0)=x$ such that 
\begin{equation}\label{eq calibrating}
u_\lambda(x)=\e^{-\lambda t} u_\lambda(\gamma^\lambda_x(-t))+\int_{-t}^0 \e^{\lambda s} L(\gamma^\lambda_x(s),\dot\gamma^\lambda_x(s))\,d s\qquad\hbox{for every $t>0$.}
\end{equation}
Moreover, there exists a constant $\alpha>0$, independent of $\lambda$ and $x$, such that 
$\|\dot\gamma^\lambda_x\|_\infty\leq \alpha$. In particular
\begin{equation}\label{claim infinite lambda-minimizer}
u_\lambda(x)=\int_{-\infty}^0 \e^{\lambda s} L(\gamma^\lambda_x(s),\dot\gamma^\lambda_x(s))\,d s.
\end{equation}
\end{prop}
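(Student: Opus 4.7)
I will construct $\gamma^\lambda_x$ as a uniform limit of finite--horizon minimizers. For each integer $n\ge 1$, Proposition~\ref{prop dynamical programming principle} yields an absolutely continuous curve $\gamma_n:[-n,0]\to M$ with $\gamma_n(0)=x$ achieving
\[
u_\lambda(x)=\e^{-\lambda n}u_\lambda(\gamma_n(-n))+\int_{-n}^0\e^{\lambda s}L(\gamma_n,\dot\gamma_n)\,ds.
\]
A dynamic programming bootstrap---applying \eqref{eq dynamical programming principle} at the intermediate point $\gamma_n(-t)$ with the shifted curve $\xi(s)=\gamma_n(s-t)$ as competitor, then combining with \eqref{eq dynamical programming principle} at horizon $t$---forces
\begin{equation}\label{eq:subcal-plan}
u_\lambda(x)=\e^{-\lambda t}u_\lambda(\gamma_n(-t))+\int_{-t}^0\e^{\lambda s}L(\gamma_n,\dot\gamma_n)\,ds\qquad\forall\,t\in(0,n].
\end{equation}

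The crux of the proof is a Lipschitz bound for $\gamma_n$ independent of $\lambda$, $x$, and $n$; this will be the main obstacle. Subtracting \eqref{eq:subcal-plan} at times $a<b\le 0$ gives
\[
\int_a^b\e^{\lambda s}L(\gamma_n,\dot\gamma_n)\,ds=\e^{\lambda b}u_\lambda(\gamma_n(b))-\e^{\lambda a}u_\lambda(\gamma_n(a)).
\]
Let $\kappa$ be the common Lipschitz constant of the $u_\lambda$ from Proposition~\ref{prop equi-Lipschitz sol} and recall $\|\lambda u_\lambda\|_\infty\le C_0$ from Proposition~\ref{properties discounted value function}(i). Writing $\e^{\lambda b}u_\lambda(\gamma_n(b))-\e^{\lambda a}u_\lambda(\gamma_n(a))=\e^{\lambda b}\bigl(u_\lambda(\gamma_n(b))-u_\lambda(\gamma_n(a))\bigr)+(\e^{\lambda b}-\e^{\lambda a})u_\lambda(\gamma_n(a))$ and using $\e^{\lambda b}-\e^{\lambda a}\le\lambda\e^{\lambda b}(b-a)$, the right-hand side is bounded by $\e^{\lambda b}\bigl(\kappa\int_a^b\|\dot\gamma_n\|\,ds+C_0(b-a)\bigr)$. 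On the left, superlinearity of $L$ at level $A=\kappa+1$ gives $L(x,v)\ge(\kappa+1)\|v\|_x-C_A$ for a constant $C_A$, whence the left-hand side is at least $(\kappa+1)\e^{\lambda a}\int_a^b\|\dot\gamma_n\|\,ds-C_A\e^{\lambda b}(b-a)$. Rearranging,
\[
\bigl[(\kappa+1)\e^{-\lambda(b-a)}-\kappa\bigr]\int_a^b\|\dot\gamma_n(s)\|\,ds\le(C_0+C_A)(b-a).
\]
Restricting to intervals with $\lambda(b-a)\le 1/(2(\kappa+1))$ keeps the bracket bounded below by $1/2$, yielding $\int_a^b\|\dot\gamma_n\|\,ds\le 2(C_0+C_A)(b-a)$ with a constant independent of $\lambda,x,n$. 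Partitioning longer intervals into pieces of that length and summing, the estimate extends to all $[a,b]\subset[-n,0]$, so $\gamma_n$ is $\alpha$--Lipschitz with $\alpha:=2(C_0+C_A)$. The delicate point is precisely the $\lambda$--uniformity: the exponential weight is localized to short intervals so that the cancellation $(\kappa+1)-\kappa=1$ survives.

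Equi--Lipschitzness of the $\gamma_n$ together with $\gamma_n(0)=x$ allows a diagonal extraction: a subsequence converges uniformly on each $[-T,0]$ to an $\alpha$--Lipschitz curve $\gamma^\lambda_x:(-\infty,0]\to M$ with $\gamma^\lambda_x(0)=x$. Passing to the limit in \eqref{eq:subcal-plan} for a fixed $t$, the lower semicontinuity of the weighted action under uniform convergence of equi--Lipschitz curves (a consequence of the convexity of $L$ in the fiber, see Theorem~\ref{teo tonelli}) gives $\le u_\lambda(x)$ on the right-hand side of \eqref{eq calibrating}; the opposite inequality is exactly \eqref{eq dynamical programming principle}, so equality holds, proving \eqref{eq calibrating}. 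For \eqref{claim infinite lambda-minimizer}, Corollary~\ref{cor discounted solution} provides a bound on $\|u_\lambda\|_\infty$, so $\e^{-\lambda t}u_\lambda(\gamma^\lambda_x(-t))\to 0$ as $t\to+\infty$; moreover $|L(\gamma^\lambda_x(s),\dot\gamma^\lambda_x(s))|$ is bounded a.e.\ by $\max\{|L(y,v)|:y\in M,\,\|v\|_y\le\alpha\}<+\infty$, so dominated convergence lets us pass $t\to+\infty$ in \eqref{eq calibrating} to obtain \eqref{claim infinite lambda-minimizer}.
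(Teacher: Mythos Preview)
Your proof is correct and follows the same overall architecture as the paper's: construct finite--horizon minimizers $\gamma_n$, establish the intermediate calibration identity on every subinterval, extract a limit by a diagonal argument, and pass to the limit using lower semicontinuity of the action together with the domination inequality.

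The only substantive difference is the order and mechanism of the Lipschitz estimate. The paper first extracts the limit curve using Tonelli's compactness (Theorem~\ref{teo tonelli}), which needs only a uniform action bound, and \emph{then} proves the Lipschitz bound on the limit curve by letting $h\to 0$ in the calibration identity on $[a,a+h]$; the trick there is to split $(\kappa+1)\int\|\dot\gamma\|$ as $\kappa\,d(\gamma(a),\gamma(a+h))+\int\|\dot\gamma\|$, so that the $\kappa\,d$ term cancels exactly against the Lipschitz bound on $u_\lambda$, yielding directly $\|\dot\gamma_x^\lambda(a)\|\le A_\kappa+C_0$. You instead prove a uniform Lipschitz bound on the approximants $\gamma_n$ \emph{before} extraction, via an integral estimate on intervals short enough that $(\kappa+1)e^{-\lambda(b-a)}-\kappa\ge 1/2$, and then extend to all intervals by partition; this lets you extract via Arzel\`a--Ascoli rather than Tonelli. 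Both routes are valid; the paper's $h\to 0$ argument is a bit slicker (no partition step, sharper constant), while yours makes the equi--compactness of the $\gamma_n$ explicit and is perhaps more transparent about $\lambda$--uniformity. Your use of Corollary~\ref{cor discounted solution} at the end is legitimate since it precedes Proposition~\ref{prop calibrating} in the paper; the paper's proof implicitly uses only Proposition~\ref{properties discounted value function}(i), which would also suffice for you.
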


\begin{proof}
According to Proposition \ref{prop dynamical programming principle} we know that, for every $n\in\N$, there exists a curve $\xi_n:[-n,0]\to M$ with $\xi_n(0)=x$ such that 
\begin{equation}\label{eq pre calibrating}
u_\lambda(x)=\e^{-\lambda n}u_\lambda(\xi_n(-n))+\int_{-n}^0 \e^{\lambda s} L(\xi_n(s),\dot\xi_n(s))\,d s.
\end{equation}
We claim that, for every $[a,b]\subset [-n,0]$, 
\begin{equation}\label{claim lambda-calibrated}
\e^{\lambda b}u_\lambda(\xi_n (b))-\e^{\lambda a}u_\lambda(\xi_n (a))
=
\int_{a}^b \e^{\lambda s} L(\xi_n (s),\dot\xi_n (s))\,d s. 
\end{equation}
Indeed, by taking into account 
Proposition \ref{properties discounted value function}--{\em (ii)} we have 
\begin{align*}
&\int_{-n}^0 \e^{\lambda s} L(\xi_n,\dot\xi_n)\,d s
=
\int_{b}^0 \e^{\lambda s} L(\xi_n,\dot\xi_n)\,d s
+
\int_{a}^b \e^{\lambda s} L(\xi_n,\dot\xi_n)\,d s
+
\int_{-n}^a \e^{\lambda s} L(\xi_n,\dot\xi_n)\,d s\\
&\geq
\Big(u_\lambda(x)-\e^{\lambda b}u_\lambda(\xi_n (b))\Big)
+
\Big(\e^{\lambda b}u_\lambda(\xi_n (b))-\e^{\lambda a}u_\lambda(\xi_n (a))\Big)\\ 
&\qquad\qquad\qquad\qquad+
\Big(\e^{\lambda a}u_\lambda(\xi_n (a))-\e^{-\lambda n} u_\lambda(\xi_n(-n))\Big)
=
u_\lambda(x)-\e^{-\lambda n} u_\lambda(\xi_n(-n)).
\end{align*}
Now we remark that the first term in the above inequality is equal to the last one, according to \eqref{eq pre calibrating}, hence all inequalities must be equalities. This proves \eqref{claim lambda-calibrated}.
By reasoning as in the proof of Proposition \ref{prop dynamical programming principle} and 
using a diagonal argument, we derive from Theorem \ref{teo tonelli} that there exists an absolutely continuous curve $\gamma_x^\lambda:(-\infty,0]\to M$ with $\gamma_x^\lambda(0)=x$ which is, up to extraction of a subsequence, the uniform limit of the curves $\xi_n$ over compact subsets of $(-\infty,0]$. Such curve satisfies
\begin{equation}\label{eq lambda calibrated}
\e^{\lambda b}u_\lambda(\gamma_x^\lambda (b))-\e^{\lambda a}u_\lambda(\gamma_x^\lambda (a))
=
\int_{a}^b \e^{\lambda s} L(\gamma_x^\lambda (s),\dot\gamma_x^\lambda (s))\,d s. 
\end{equation}
for every $[a,b]\subset (-\infty,0]$. To see this, it suffices to pass to the limit in \eqref{claim lambda-calibrated}. The equality holds also for the limit curve $\gamma_x^\lambda$ by the lower semicontinuity of the functional $\Length^\lambda$ stated in Theorem \ref{teo tonelli} and by Proposition \ref{properties discounted value function}--{\em (ii)}. In particular, this proves assertion \eqref{eq calibrating}.

The fact that the curves $\gamma^\lambda_x$ are equi-Lipschitz is a consequence of the fact that the functions $u_\lambda$ are equi-Lipschitz, say $\kappa$--Lipschitz, according to Proposition \ref{properties discounted value function}. Indeed, by superlinearity of $L$, there exists a constant $A_\kappa$, depending on $\kappa$, such that 
\[
L(x,v)\geq (\kappa+1)\|v\|_x-{A_\kappa}\qquad\hbox{for every $(x,v)\in TM$.}
\]
For every $a\in (-\infty,0)$ and $h>0$ small enough we get, from \eqref{eq lambda calibrated},  
\begin{align}\label{align 1}
&\e^{\lambda (a+h)}u_\lambda(\gamma_x^\lambda (a+h))-\e^{\lambda a}u_\lambda(\gamma_x^\lambda (a))
=
\int_{a}^{a+h} \e^{\lambda s} L(\gamma_x^\lambda (s),\dot\gamma_x^\lambda (s))\,d s\nonumber\\ 
&\qquad\qquad\geq
% \int_{a}^{a+h} \e^{\lambda s} \Big( (\kappa+1)\|\dot\gamma_x^\lambda (s))\|_{\gamma_x^\lambda(s)}-{A_\kappa}\Big)\,d s
% \geq
\e^{\lambda a}\,(\kappa+1) \int_{a}^{a+h} \|\dot\gamma_x^\lambda (s)\|_{\gamma_x^\lambda(s)}\,d s
-
{A_\kappa}\int_{a}^{a+h} \e^{\lambda s}\,d s\\
&\qquad\qquad\geq
\e^{\lambda a}\,\left(\kappa\,d(\gamma_x^\lambda (a),\gamma_x^\lambda (a+h))
+ 
\int_{a}^{a+h} \|\dot\gamma_x^\lambda (s)\|_{\gamma_x^\lambda(s)}\,d s
-
{A_\kappa}\,\frac{\e^{\lambda h}-1}{\lambda}\right)\nonumber
\end{align}
On the other hand
\begin{align}\label{align 2}
&\e^{\lambda (a+h)}u_\lambda(\gamma_x^\lambda (a+h))-\e^{\lambda a}u_\lambda(\gamma_x^\lambda (a))\nonumber\\
&\qquad\qquad
\leq
(\e^{\lambda (a+h)}-\e^{\lambda a})u_\lambda(\gamma_x^\lambda (a+h))
+
\e^{\lambda a}\,\kappa\,d\left(\gamma_x^\lambda (a),\gamma_x^\lambda (a+h)\right)\\
&\qquad\qquad
\leq
\e^{\lambda a}\,\left(
C_0\,\frac{\e^{\lambda h}-1}{\lambda}	
+
\kappa\,d(\gamma_x^\lambda (a),\gamma_x^\lambda (a+h))
\right),\nonumber
\end{align}
where $C_0$ is the constant given by Proposition \ref{properties discounted value function}--{\em (i)}. Plugging \eqref{align 2} into \eqref{align 1} and dividing by $h\,\e^{\lambda a}$ we end up with
\begin{equation}
\frac{1}{h}\,\int_{a}^{a+h} \|\dot\gamma_x^\lambda (s)\|_{\gamma_x^\lambda(s)}\,d s
\leq
({A_\kappa}+C_0)\,\frac{\e^{\lambda h}-1}{\lambda\,h}.
\end{equation}
Sending $h\to 0$ we infer 
\[
 \|\dot\gamma_x^\lambda (a)\|_{\gamma_x^\lambda(a)}\leq \alpha:= ({A_\kappa}+C_0)\qquad\hbox{for a.e. $a\in (-\infty,0]$,}
\]
as it was to be shown. In particular, by sending $t\to +\infty$ in \eqref{eq calibrating} we get 
\eqref{claim infinite lambda-minimizer} by the Dominated Convergence Theorem.
\end{proof}
\medskip

\section{Convergence of the discounted value functions}\label{sez main}\medskip

In this section we will prove our main theorem, namely that the discounted value functions $u_\lambda$ converge, as $\lambda\to 0$, to a particular  solution $u_0$ of the critical equation \eqref{eq critical}.

To define $u_0$, we consider the family ${\mathcal F}_-$ of subsolutions  $u:M\to \R$ of the critical equation \eqref{eq critical} satisfying the following condition
\begin{equation}\label{CONDITiON U0}
\int_M u\,d\mu\leq 0 \text{ for every projected Mather measure $\mu$}.
\end{equation}
For the concept of Mather measure see Definition \ref{defMather} in
 \S \ref{sez Mather theory} below.

Note that, given any critical subsolution $u$, the function $u-\lVert u\rVert_\infty$  
is in ${\mathcal F}_-$. Therefore ${\mathcal F}_-$ is not empty.
\begin{lemma} The family ${\mathcal F}_-$ is uniformly bounded from above, i.e.
$$\sup\{u(x)\mid x\in M,u\in {\mathcal F}_-\}<+\infty.$$
\end{lemma}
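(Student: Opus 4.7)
The plan is to exploit two facts about elements of $\mathcal{F}_-$: they are all Lipschitz with a common constant, and the condition \eqref{CONDITiON U0} forces each of them to be non-positive somewhere in the support of any given projected Mather measure.

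First, by Proposition \ref{COMPBASIQUE}, every critical subsolution is $\kappa_0$--Lipschitz, where $\kappa_0=\sup\{\lVert p\rVert_x\mid H(x,p)\leq 0\}$ is finite by the coercivity assumption (H2). In particular, every $u\in \mathcal{F}_-$ is $\kappa_0$--Lipschitz, so
\[
u(x)\leq u(y)+\kappa_0\, d(x,y)\qquad\text{for all }x,y\in M,\ u\in\mathcal{F}_-.
\]

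Next, I would invoke the existence of at least one projected Mather measure $\mu$ (this is guaranteed by the weak KAM theory developed in the appendices and Definition \ref{defMather}); since $\mu$ is a probability measure on the compact manifold $M$, its support is a nonempty compact subset of $M$. For any $u\in\mathcal{F}_-$, the inequality $\int_M u\,d\mu\leq 0$ implies that the continuous function $u$ cannot be strictly positive on all of $\mathrm{supp}(\mu)$; hence there exists some point $x_u\in\mathrm{supp}(\mu)\subset M$ with $u(x_u)\leq 0$.

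Combining the two observations, for every $u\in \mathcal{F}_-$ and every $x\in M$,
\[
u(x)\leq u(x_u)+\kappa_0\,d(x,x_u)\leq \kappa_0\, \mathrm{diam}(M),
\]
which gives the uniform upper bound
\[
\sup\{u(x)\mid x\in M,\ u\in\mathcal{F}_-\}\leq \kappa_0\,\mathrm{diam}(M)<+\infty,
\]
since $M$ is compact. The only non-routine point is verifying that at least one projected Mather measure exists to play the role of $\mu$; this is an ingredient imported from the weak KAM machinery established in the appendix and should be cited rather than re-proved here.
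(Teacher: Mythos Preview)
Your proof is correct and follows essentially the same approach as the paper: both use the common Lipschitz constant for critical subsolutions, fix a projected Mather measure $\mu$, deduce from $\int_M u\,d\mu\leq 0$ that $u$ is non-positive somewhere (the paper phrases this as $\min u\leq 0$, you as the existence of a point $x_u$ with $u(x_u)\leq 0$), and then bound $\max u$ by $\kappa\,\operatorname{diam}(M)$ via the Lipschitz estimate.
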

\begin{proof} The family of critical subsolutions is equi-Lipschitz. Call $\kappa$ a common Lipschitz constant.  Since the set of projected Mather measure $\mu$ is not empty, picking such a probability measure $\mu$, for $u\in{\mathcal F}_-$, we have $\min u=\int_M \min u\,d\mu\leq \int_M u\,d\mu\leq 0$.
Hence $\max u\leq \max u-\min u$. Since $u$ is $\kappa$-Lipschitz, we also $\max u-\min u\leq \kappa\operatorname{diam}(M)<+\infty$.
 \end{proof}
Therefore we can define  $u_0:M\to \R$ by
$$u_0=\sup_{{\mathcal F}_- }u.$$
As the supremum of a family of viscosity subsolutions, we know that $u_0$ is itself a critical subsolution. We will obtain later that $u_0$ is a solution, see Theorem \ref{theo main} below.

We now start to study the asymptotic behavior of the discounted value functions $u_\lambda$ as $\lambda\to 0$ and the relation with $u_0$. We will use the set $\tilde\Mis_0(L)$ of Mather measures on $TM$, and the set  $\Mis_0(L)$ of projected Mather measures on $M$, see section \ref{sez Mather theory} for definition and properties. We begin with the following result:

\begin{prop}\label{ineq lim}
Let $\lambda>0$. Then, for every $\mu\in\Mis_0(L)$, we have 
\[ 
\int_M u_\lambda(x)\, d \mu(x)\leq 0.
\]
In particular, if the functions $u_{\lambda_n}$ uniformly converge to $u$ for some sequence $\lambda_n\to 0$, then $u\leq u_0$ on $M$.
\end{prop}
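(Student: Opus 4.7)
The plan is to combine the $C^1$-approximation of subsolutions of convex Hamilton--Jacobi equations (Theorem \ref{EncoreUneVariante}, already used in the proofs of Theorem \ref{teo comparison discounted eq} and Proposition \ref{prop subsol characterization}) with the Fenchel inequality and the defining closedness property of Mather measures.

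First I would approximate $u_\lambda$, viewed as a subsolution of $\lambda u_\lambda+H(x,d_xu_\lambda)\leq 0$, by $C^1$ functions $u_n:M\to\R$ such that $u_n\to u_\lambda$ uniformly and
\[
\lambda u_\lambda(x)+H(x,d_xu_n)\leq \frac1n \quad\hbox{for every $x\in M$.}
\]
Setting $\eps_n:=1/n+\lambda\|u_n-u_\lambda\|_\infty\to 0$, this upgrades to $\lambda u_n(x)+H(x,d_xu_n)\leq \eps_n$ pointwise on $M$. Combining this with the Fenchel inequality $\langle d_xu_n,v\rangle\leq H(x,d_xu_n)+L(x,v)$ yields, for every $(x,v)\in TM$,
\[
\lambda u_n(x)+\langle d_xu_n,v\rangle - L(x,v)\leq \eps_n.
\]

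Next I would integrate this inequality against a Mather measure $\tilde\mu\in\tilde\Mis_0(L)$ projecting to $\mu$. By the closedness property of $\tilde\mu$ applied to the $C^1$ test function $u_n$, the middle term vanishes; by the minimizing property of Mather measures together with $c(H)=0$, we have $\int_{TM}L\,d\tilde\mu=-c(H)=0$; and by definition of $\mu=\pi_\sharp\tilde\mu$ we have $\int_{TM}u_n\circ\pi\,d\tilde\mu=\int_Mu_n\,d\mu$. What remains is $\lambda\int_M u_n\,d\mu\leq \eps_n$, and letting $n\to+\infty$ delivers $\int_M u_\lambda\,d\mu\leq 0$.

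For the second assertion, if $u_{\lambda_n}\to u$ uniformly, then for every projected Mather measure $\mu$ one has $\int_M u\,d\mu=\lim_n\int_M u_{\lambda_n}\,d\mu\leq 0$, while $u$ is itself a critical subsolution by stability of viscosity subsolutions (and equi-Lipschitzness, via Corollary \ref{cor discounted solution}). Thus $u\in\mathcal{F}_-$ and hence $u\leq u_0$ on $M$ by definition of $u_0$. The one subtle point to be careful about is that we are working in the merely continuous, non-Tonelli setting: both the closedness identity $\int_{TM}\langle d_xf,v\rangle\,d\tilde\mu=0$ for $C^1$ test functions $f$ and the minimizing value $\int L\,d\tilde\mu=-c(H)$ must be invoked from the weak KAM appendix developed for general convex superlinear Lagrangians in this paper, rather than from the classical Tonelli framework.
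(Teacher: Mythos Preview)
Your proof is correct and follows essentially the same approach as the paper: approximate $u_\lambda$ by $C^1$ functions via Theorem~\ref{EncoreUneVariante}, combine the resulting pointwise inequality with Fenchel, integrate against a Mather measure using closedness and the minimizing property, then pass to the limit. The only cosmetic difference is that the paper keeps $\lambda u_\lambda$ in the integrated inequality (obtaining $\lambda\int_M u_\lambda\,d\mu\leq 1/n$ directly) rather than first replacing $u_\lambda$ by $u_n$ as you do, but this is immaterial.
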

\begin{proof} Like in the proof of Theorem \ref{teo comparison discounted eq}, there exists a sequence $(w_n)_n$ of functions in $\D{C}^1(M)$ such that 
$\|u_\lambda-w_n \|_\infty \leq 1/n$ and  
\[
 \lambda u_\lambda(x)+H\big(x,d_x w_n\big) \leq 1/n, \text{ for every $x\in M$.}
\]

By  the Fenchel inequality
\[
L(x,v)+H\big(x,d_x w_n\big)\geq d_x w_n(v), \text{ for every $(x,v)\in TM$},
\]
Combining these two inequalities, yields
\begin{equation}\label{FENDISC}
 \lambda u_\lambda(x)+d_x w_n(v) \leq L(x,v)+\frac1n,\text{ for every $(x,v)\in TM$}.
\end{equation}
Let us fix  some $\tilde\mu\in\tilde\Mis_0(L)$, and set $\mu=\pi_\#\tilde\mu\in\Mis_0(L)$. Since $\mu$ is closed and minimizing, we have $\int_{TM}d_xw_n(v)\, d\tilde\mu(x,v)=0$, and $\int_{TM}L(x,v)\,d\tilde\mu(x,v)=0$.
Therefore if we integrate \eqref{FENDISC}, we obtain
\[
\lambda \int_M u_\lambda(x)\,d\mu(x)\leq \frac1n.
\]
Since $\lambda>0$, letting $n\to\infty$, yields $ \int_M u_\lambda(x)\,d\mu(x)\leq 0$.
If $u$ is the uniform limit of $\left(u_{\lambda_n}\right)_n$ for some $\lambda_n\to 0$, we know that it is a solution of 
the critical equation \eqref{eq critical}. Moreover, it also has to satisfy $ \int_M u(x)\,d\mu(x)\leq 0$
 for every projected Mather measure $\mu$.  Therefore  $u\in {\mathcal F}_-$ and $u\leq u_0$. 
 \end{proof}

The next (and final) step is to show that $u\geq u_0$ in $M$ whenever $u$ is the uniform limit of  $\left(u_{\lambda_n}\right)_n$ for some $\lambda_n\to 0$. We need to introduce some tools and to prove some preliminary results first.\smallskip

We will use for this the following way to construct closed measure. Suppose $\gamma:]-\infty,0] \to M$ is a Lipschitz curve, and $\lambda>0$. We define the measure $\tilde \mu^\lambda_{\gamma}$ on $TM$ by
\begin{align*}
\int_{TM} f(x,v)\, d \tilde{\mu}^{\lambda}_\gamma 
:=& 
\int_{-\infty}^0 \frac{d}{d s} (e^{\lambda s}) f(\gamma(s),\dot\gamma(s))\, ds\\
=&\lambda\int_{-\infty}^0 e^{\lambda s} f(\gamma(s),\dot\gamma(s))\, ds,
\end{align*}
for every $f\in\D{C}_c(TM)$.
It is not difficult to see that $\mu^\lambda_\gamma$ is a probability measure whose support is contained in the closure of 
$$\{(\gamma(s),\dot\gamma(s))\mid s\in ]-\infty,0] \text{ where $\gamma(s)$ is differentiable}\}$$
 which is compact because it is contained in $\{(x,v)\in TM\mid \lVert v\rVert_x\leq \kappa\}$, where $\kappa$ is a Lipschitz constant for $\gamma$.

Therefore if $\gamma_\lambda:]-\infty,0] \to M, \lambda >0$ is a family of  equi-Lipschitz curves, the family of probability measures $\tilde \mu^\lambda_{\gamma_\lambda}, \lambda>0$ is relatively compact in the weak topology on measures. Therefore for any sequence $\lambda_n\to 0$, we can extract a subsequence of 
$\tilde \mu_{\gamma_{\lambda_n}}^{\lambda_n}$ converging to 
a probability measure $\tilde \mu$ on $TM$. We now show that this 
measure is necessarily closed.

\begin{prop} Suppose $\gamma_\lambda:]-\infty,0] \to M, \lambda >0$, is a family of  equi-Lipschitz curves.  If the measure $\tilde \mu$ on $TM$ is the weak limit of $\tilde \mu^{\lambda_n}_{\gamma_{\lambda_n}}$ for some $\lambda_n\to 0$ then $\tilde\mu$ is closed.
\end{prop}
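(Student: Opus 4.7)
The goal is to verify that $\int_{TM} d_x\phi(v)\, d\tilde\mu(x,v) = 0$ for every $\phi \in \D{C}^1(M,\R)$, which is the defining property of a closed measure. The plan is to compute this integral against $\tilde\mu^\lambda_{\gamma_\lambda}$ explicitly, show it is $O(\lambda)$, and then pass to the limit along $\lambda_n \to 0$ using the weak convergence hypothesis.

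First I would fix $\phi \in \D{C}^1(M,\R)$. Since $\gamma_\lambda$ is Lipschitz, $\phi\circ\gamma_\lambda$ is absolutely continuous and its almost everywhere derivative equals $d_{\gamma_\lambda(s)}\phi(\dot\gamma_\lambda(s))$. By the definition of $\tilde\mu^\lambda_{\gamma_\lambda}$,
$$\int_{TM} d_x\phi(v)\, d\tilde\mu^\lambda_{\gamma_\lambda}(x,v) \;=\; \lambda \int_{-\infty}^0 e^{\lambda s}\, \frac{d}{ds}(\phi\circ\gamma_\lambda)(s)\, ds.$$
An integration by parts on each finite interval $[-T,0]$, followed by letting $T\to+\infty$ (the boundary term $e^{-\lambda T}\phi(\gamma_\lambda(-T))$ vanishes because $\phi$ is bounded on the compact manifold $M$), yields
$$\int_{TM} d_x\phi(v)\, d\tilde\mu^\lambda_{\gamma_\lambda}(x,v) \;=\; \lambda\,\phi(\gamma_\lambda(0)) \;-\; \lambda^2 \int_{-\infty}^0 e^{\lambda s}\,\phi(\gamma_\lambda(s))\, ds.$$
Both terms are bounded by $\lambda\,\lVert\phi\rVert_\infty$ in absolute value (the second because $\int_{-\infty}^0 e^{\lambda s}\, ds = 1/\lambda$), so the whole integral is $O(\lambda)$ and in particular tends to $0$ as $\lambda\to 0$.

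To conclude, I would interchange limit and integration. If $\kappa$ is a common Lipschitz constant for the family $(\gamma_\lambda)$, each $\tilde\mu^\lambda_{\gamma_\lambda}$ is supported in the fixed compact set $K_\kappa := \{(x,v)\in TM : \lVert v\rVert_x \leq \kappa\}$, hence so is the weak limit $\tilde\mu$. The function $(x,v)\mapsto d_x\phi(v)$ is continuous on $TM$; multiplying by a cutoff in $\D{C}_c(TM)$ that is equal to $1$ on $K_\kappa$ turns it into a legitimate test function for the weak convergence, and one obtains
$$\int_{TM} d_x\phi(v)\, d\tilde\mu(x,v) \;=\; \lim_{n\to+\infty}\int_{TM} d_x\phi(v)\, d\tilde\mu^{\lambda_n}_{\gamma_{\lambda_n}}(x,v) \;=\; 0.$$
The main substantive step is the integration by parts producing a boundary term and a remainder both of which vanish with $\lambda$; there is no genuine obstacle beyond bookkeeping, since the factor $\lambda$ built into the definition of $\tilde\mu^\lambda_\gamma$ is precisely what makes the cancellation work.
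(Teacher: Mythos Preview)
Your proof is correct and follows essentially the same approach as the paper: compute $\int_{TM} d_x\phi(v)\, d\tilde\mu^\lambda_{\gamma_\lambda}$ explicitly, bound it by $2\lambda\lVert\phi\rVert_\infty$, and pass to the weak limit. The paper obtains the identity by differentiating the product $e^{\lambda s}\phi(\gamma_\lambda(s))$ and integrating, which is equivalent to your integration by parts; your added remark about the cutoff to justify testing weak convergence against the non-compactly-supported function $(x,v)\mapsto d_x\phi(v)$ is a detail the paper leaves implicit.
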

\begin{proof} Call $\kappa$ a common Lipschitz constant for the family of curves $\gamma_\lambda$, 
then both $\tilde \mu$, and the measures $\tilde \mu^{\lambda_n}_{\gamma_{\lambda_n}}$ have all support
 in the compact set $\{(x,v)\in TM\mid \lVert v\rVert_x\leq \kappa\}$. 
Therefore $\tilde \mu$ is a probability measure and $\int_{TM}\lVert v\rVert_x\, d\tilde\mu(x,v)\leq \kappa<+\infty$.
Moreover, if $\varphi :M\to\R$ is {\rm C}$^1$, then the function $s\mapsto e^{\lambda s}\varphi(\gamma_\lambda(s))$ is Lipschitz on $]-\infty, 0]$ with derivative
$$s\mapsto \lambda  e^{\lambda s}\varphi(\gamma_\lambda(s))+  e^{\lambda s}d_{\gamma_\lambda(s)}\varphi(\dot\gamma_\lambda(s)).$$
Hence
$$ \varphi(\gamma_\lambda(0))=\int_{-\infty}^0e^{\lambda s}d_{\gamma_\lambda(s)}\varphi(\dot\gamma_\lambda(s))\,ds+\int_{-\infty}^0 \lambda  e^{\lambda s}\varphi(\gamma_\lambda(s))\,ds.$$
Note that the left hand side is bounded by $\lVert\varphi\rVert_\infty$, and also
$$
\bigl\vert \int_{-\infty}^0 \lambda  e^{\lambda s}\varphi(\gamma_\lambda(s))\,ds \bigr\vert \leq \int_{-\infty}^0 \lambda  e^{\lambda s}\lVert\varphi\rVert_\infty\,ds\leq \lVert\varphi\rVert_\infty.
$$
It follows that 
$$
\bigl\vert \int_{TM}d_x\varphi(v)\,d\tilde \mu^{\lambda_n}_{\gamma_{\lambda_n}}\bigr\vert
= \bigl\vert\lambda_n\int_{-\infty}^0e^{\lambda_n s}d_{\gamma_{\lambda_n}(s)}\varphi(\dot\gamma_{\lambda_n}(s))\,ds\bigr\vert\leq 2\lambda_n\lVert \varphi\rVert_\infty\to 0,
$$
as $\lambda_n\to 0$. Therefore, we obtain $ \int_{TM}d_x\varphi(v)\,d\tilde \mu=\lim_{n\to+\infty} \int_{TM}d_x\varphi(v)\, d\tilde \mu^{\lambda_n}_{\gamma_{\lambda_n}}=0$.
\end{proof}

Suppose now that $x\in M$ is given. By Proposition \ref{prop calibrating}, for every $\lambda>0$, we  can choose   as 
$\gamma^{\lambda}_x:(-\infty,0]\to M$ a Lipschitz curve
satisfying $\gamma_x^\lambda(0)=x$, and
$$u_\lambda(x)= \int_{-\infty}^0e^{\lambda s}L(\gamma(s),\dot\gamma(s))\,ds.$$
By Proposition \ref{prop calibrating}, we know that the family $\gamma^{\lambda}_x$ 
is equi-Lipschitz. Therefore, we define 
$\tilde \mu_x^\lambda=\tilde\mu_{\gamma^\lambda_x}^\lambda$ by
\begin{align*}
\int_{TM} f(x,v)\, d \tilde{\mu}^{\lambda}_x 
=& 
\int_{-\infty}^0 \frac{d}{d s} (e^{\lambda s}) 
f(\gamma_x^\lambda(s),\dot\gamma_x^\lambda(s))\, ds\\
=&\lambda\int_{-\infty}^0 e^{\lambda s} 
f(\gamma_x^\lambda(s),\dot\gamma_x^\lambda(s))\, ds,
\end{align*}
for every $f\in\D{C}_c(TM)$.
\begin{lemma}\label{LEMCONVMAT} The measures 
$\tilde \mu_x^\lambda,\lambda>0$ defined above are all probability measures,
whose supports are all contained in a common compact subset of $TM$.
In particular, for any sequence $\lambda_n\to 0$, extracting a subsequence if necessary,
we can assume that $\tilde \mu_x^{\lambda_{n}}$
converges to a probability measure on $TM$. This measure is a (closed) Mather
measure.
\end{lemma}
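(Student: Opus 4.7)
The plan is to verify the three claims in sequence, relying on the calibrating curves $\gamma^\lambda_x$ produced by Proposition \ref{prop calibrating}.

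First, that each $\tilde\mu^\lambda_x$ is a probability measure is a direct computation: taking $f\equiv 1$ in the definition yields $\lambda\int_{-\infty}^0 e^{\lambda s}\,ds=1$. For the support claim, Proposition \ref{prop calibrating} furnishes a uniform bound $\alpha>0$, independent of $\lambda$ and $x$, such that $\|\dot\gamma^\lambda_x\|_\infty\leq\alpha$. Hence the support of each $\tilde\mu^\lambda_x$ lies in the fixed compact set $K_\alpha:=\{(y,v)\in TM\mid \|v\|_y\leq\alpha\}$. By Prokhorov, the family $\{\tilde\mu^\lambda_x\}_{\lambda>0}$ is relatively compact in the weak topology, so for any sequence $\lambda_n\to 0$ we may extract a subsequence with $\tilde\mu^{\lambda_n}_x\weakcv\tilde\mu$ weakly, and $\tilde\mu$ is again a probability measure supported in $K_\alpha$.

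Next, I would show that the limit $\tilde\mu$ is a Mather measure. Closedness is automatic from the proposition immediately preceding the statement, which applies verbatim to the equi-Lipschitz family $\gamma^{\lambda_n}_x$. It remains to show that $\tilde\mu$ is minimizing, i.e.\ $\int_{TM}L\,d\tilde\mu=0$, since (under our normalization $c(H)=0$) a closed probability measure is Mather exactly when it achieves this value. The key identity is
\begin{equation*}
\int_{TM} L(y,v)\,d\tilde\mu^\lambda_x(y,v)=\lambda\int_{-\infty}^0 e^{\lambda s}L(\gamma^\lambda_x(s),\dot\gamma^\lambda_x(s))\,ds=\lambda u_\lambda(x),
\end{equation*}
where the first equality is the definition of $\tilde\mu^\lambda_x$ and the second is formula \eqref{claim infinite lambda-minimizer}. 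Since $L$ is continuous and all measures involved are supported in the common compact set $K_\alpha$, weak convergence of $\tilde\mu^{\lambda_n}_x$ to $\tilde\mu$ passes to the integral of $L$, giving
\begin{equation*}
\int_{TM}L\,d\tilde\mu=\lim_{n\to\infty}\lambda_n u_{\lambda_n}(x).
\end{equation*}
By Corollary \ref{cor discounted solution}, $\|\lambda u_\lambda\|_\infty\to 0$ as $\lambda\to 0$, so the right-hand side is $0$, as required.

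The only subtle point in the argument is justifying the convergence of the $L$-integrals. This is not a concern here thanks to the uniform support bound $K_\alpha$, which lets us test weak convergence against the bounded continuous function $L|_{K_\alpha}$ (extended arbitrarily to a compactly supported continuous function on $TM$). Everything else is a straightforward assembly of previously established facts: Proposition \ref{prop calibrating} for the equi-Lipschitz bound, the preceding proposition for closedness, and Corollary \ref{cor discounted solution} for the vanishing of $\lambda u_\lambda$.
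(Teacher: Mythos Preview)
Your proof is correct and follows essentially the same route as the paper: the probability and compact-support claims come from the equi-Lipschitz bound of Proposition \ref{prop calibrating}, closedness of the limit is imported from the preceding proposition, and the minimizing property is obtained from the identity $\int_{TM}L\,d\tilde\mu^\lambda_x=\lambda u_\lambda(x)$ together with Corollary \ref{cor discounted solution}. Your added remark about extending $L|_{K_\alpha}$ to justify passing to the limit is a helpful clarification, but otherwise the argument matches the paper's.
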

\begin{proof} From what we have done above we know that the family 
$\tilde \mu_x^\lambda,\lambda>0$ is weakly compact and any weak limit point
is a closed measure. It remains to show with the notations of the lemma that 
$\int_{TM}L\,d\tilde\mu=0$. We have
\begin{align*}
\int_{TM}L\,d\tilde\mu&=\lim_{n\to\infty}\int_{TM}L\,d\tilde\mu_x^{\lambda_{n}}\\
&=\lim_{n\to\infty}\int_{-\infty}^0 \frac{d}{d s} (e^{\lambda_n s})
L(\gamma_x^{\lambda_n}(s),\dot\gamma_x^{\lambda_n}(s))\, ds\\
&=\lim_{n\to\infty}\lambda_nu_{\lambda_n}(x)\\
&=0,
\end{align*}
where the last equality follows from the fact that $\lambda u_\lambda \to 0$, see Corollary \ref{cor discounted solution}
\end{proof}
The following lemma will be crucial in the proof of Proposition \ref{characu0}.
\begin{lemma}\label{ineq prim}
Let $w$ be any critical subsolution. For every $\lambda>0$ and $x\in M$ 
\begin{equation}\label{eq pre-final}
u_\lambda(x)\geq w(x)-\int_{TM} w(y)\, d \tilde{\mu}^{\lambda}_x (y,v).
\end{equation}
\end{lemma}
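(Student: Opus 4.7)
The plan is to exploit the calibration property of the curve $\gamma_x^\lambda$ (from Proposition \ref{prop calibrating}) together with a smooth approximation of the critical subsolution $w$, and then interpret the resulting inequality through the definition of the measure $\tilde\mu_x^\lambda$.

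First I would invoke Theorem \ref{EncoreUneVariante} (as used in the proof of Theorem \ref{teo comparison discounted eq} and Proposition \ref{prop subsol characterization}) to produce a sequence of $C^1$ functions $w_n : M \to \R$ with $\|w_n - w\|_\infty \le 1/n$ and $H(x, d_x w_n) \le 1/n$ for every $x \in M$. Applying the Fenchel inequality $d_x w_n(v) \le L(x,v) + H(x, d_x w_n)$ then gives
\[
d_x w_n(v) \le L(x,v) + \tfrac{1}{n} \quad \text{for every } (x,v)\in TM.
\]

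Next, with $\gamma := \gamma_x^\lambda$ the calibrating curve from Proposition \ref{prop calibrating}, I would compute the derivative
\[
\frac{d}{ds}\bigl(e^{\lambda s} w_n(\gamma(s))\bigr) = \lambda e^{\lambda s} w_n(\gamma(s)) + e^{\lambda s} d_{\gamma(s)} w_n(\dot\gamma(s))
\]
and integrate on $(-\infty, 0]$. Since $w_n$ is bounded and $\gamma$ is Lipschitz, the boundary term at $-\infty$ vanishes, yielding
\[
w_n(x) = \int_{-\infty}^0 \lambda e^{\lambda s} w_n(\gamma(s))\, ds + \int_{-\infty}^0 e^{\lambda s} d_{\gamma(s)} w_n(\dot\gamma(s))\, ds.
\]
Using the Fenchel bound on the second integral and the calibration identity \eqref{claim infinite lambda-minimizer} for the first, I obtain
\[
w_n(x) \le \int_{-\infty}^0 \lambda e^{\lambda s} w_n(\gamma(s))\, ds + u_\lambda(x) + \frac{1}{\lambda n}.
\]

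Finally, recognizing that, by the very definition of $\tilde\mu_x^\lambda$,
\[
\int_{TM} w_n(y)\, d\tilde\mu_x^\lambda(y,v) = \lambda \int_{-\infty}^0 e^{\lambda s} w_n(\gamma(s))\, ds,
\]
the inequality rearranges to
\[
u_\lambda(x) \ge w_n(x) - \int_{TM} w_n(y)\, d\tilde\mu_x^\lambda(y,v) - \frac{1}{\lambda n}.
\]
Sending $n \to \infty$, the uniform convergence $w_n \to w$ lets me pass to the limit in both the pointwise value at $x$ and the integral against the probability measure $\tilde\mu_x^\lambda$, giving the desired inequality. The only mild obstacle is to ensure the boundary term at $-\infty$ really vanishes and that the Fenchel inequality is applied with the right sign; both are immediate from boundedness of $w_n$ and the approximate subsolution property $H(x, d_x w_n) \le 1/n$.
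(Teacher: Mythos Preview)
Your proof is correct and follows essentially the same approach as the paper's: smooth approximation of the critical subsolution via Theorem~\ref{EncoreUneVariante}, the Fenchel inequality along the calibrating curve $\gamma_x^\lambda$, and an integration by parts against the weight $e^{\lambda s}$. The only cosmetic difference is that the paper works on the finite interval $[-t,0]$ and sends $t\to+\infty$ (carrying the boundary term $e^{-\lambda t}\bigl(u_\lambda(\gamma_x^\lambda(-t))-w_\eps(\gamma_x^\lambda(-t))\bigr)$ explicitly), whereas you integrate directly on $(-\infty,0]$; since $\gamma_x^\lambda$ is Lipschitz and $w_n$ is bounded, both routes are equivalent.
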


\begin{proof}
Let $\eps>0$. According to Theorem \ref{EncoreUneVariante}, there exists a smooth function $w_\eps$ such that 
$$
\|w-w_\eps\|_\infty <\eps \text{ and } H(x,d_x w_\eps)<\eps, \text{ for every $x\in M$}.
$$
By the Fenchel inequality, we have
\begin{align*}
L\big(\gamma^{\lambda}_x(s), \dot\gamma^{\lambda}_x(s)\big)
&\geq
d_{\gamma^{\lambda}_x(s)} w_\eps(\dot\gamma^{\lambda}_x(s))-H\big(\gamma^{\lambda}_x(s),d_{\gamma^{\lambda}_x(s)} w_\eps\big)\\
&\geq 
d_{\gamma^{\lambda}_x(s)} w_\eps(\dot\gamma^{\lambda}_x(s))-\eps
\end{align*}
for every $s<0$. Using the definition of the curve $\gamma^{\lambda}_x$, see Proposition \ref{prop calibrating}, 
we get 
\begin{align*}
u_\lambda (x)
&=
e^{-\lambda t}u_\lambda \big(\gamma^{\lambda}_x(-t)\big) +  \int_{-t}^0 e^{\lambda s}L\big( \gamma^{\lambda}_x(s),\dot \gamma^{\lambda}_x (s)\big)d s \\
&\geq 
e^{-\lambda t}u_\lambda \big(\gamma^{\lambda}_x(-t)\big) 
+  
\int_{-t}^0 e^{\lambda s}
d_{\gamma^{\lambda}_x(s)} w_\eps(\dot\gamma^{\lambda}_x(s))\,d s 
-\eps\,\int_{-t}^0 e^{\lambda s}d s \\
&=w_\eps (x)  - \int_{-t}^0 \frac{d}{d s} (e^{\lambda s}) w_\eps\big(\gamma^{\lambda}_x(s)\big)d s\\
&\qquad\qquad  + e^{-\lambda t}\Big(u_\lambda \big(\gamma^{\lambda}_x(-t)) -w_\eps \big(\gamma^{\lambda}_x(-t)\big)\Big)-\frac{\eps}{\lambda}(1-e^{-\lambda t}),
\end{align*}
where, for the last equality, we have used an integration by parts and the fact that $d_{\gamma^{\lambda}_x(s)} w_\eps(\dot\gamma^{\lambda}_x(s))=\frac{d}{d s}w_\eps\big(\gamma^{\lambda}_x(s)\big)$.
Sending now $t\to +\infty$ we infer
$$
u_\lambda (x)\geq w_\eps (x)  - \int_{-\infty}^0 \frac{d}{d s} (e^{\lambda s})\, w_\eps\big(\gamma^{\lambda}_x(s)\big)\,d s-\frac{\eps}{\lambda} = w_\eps (x)-\int_{TM} w_\eps(y)\,d \tilde{\mu}^{\lambda}_x (y,v)  -\frac{\eps}{\lambda} .
$$
The assertion follows by letting $\eps\to 0$.
\end{proof}

We are now ready to prove our main theorem:

\begin{teorema}\label{theo main}
The functions $u_\lambda$ uniformly converge to  $u_0$ on $M$ {as $\lambda\to 0$ }.
In particular, as an accumulation point of $u_\lambda$, as $\lambda\to 0$, the function $u_0$ is a viscosity solution of \eqref{eq critical}.
\end{teorema}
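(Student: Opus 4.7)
The plan is a classic compactness/subsequence argument. By Corollary \ref{cor discounted solution}, the family $\{u_\lambda\}_{\lambda>0}$ is equi-Lipschitz and equi-bounded, so by Ascoli--Arzel\`a, any sequence $\lambda_n\to 0$ admits a subsequence (not relabeled) along which $u_{\lambda_n}$ converges uniformly to some continuous function $u:M\to\R$. By the standard stability of viscosity solutions under uniform convergence, together with the fact that $\|\lambda_n u_{\lambda_n}\|_\infty\to 0$ (Corollary \ref{cor discounted solution}), passing to the limit in $\lambda_n u_{\lambda_n}+H(x,d_xu_{\lambda_n})=0$ shows that $u$ is a critical solution. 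To conclude convergence of the whole family, it suffices to show that every such subsequential limit $u$ coincides with $u_0$; this will force uniform convergence of $u_\lambda$ to $u_0$ as $\lambda\to 0$.

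The inequality $u\leq u_0$ on $M$ follows immediately from Proposition \ref{ineq lim}: passing to the limit in $\int_M u_{\lambda_n}\,d\mu\leq 0$ against any projected Mather measure $\mu$ shows $u\in\mathcal{F}_-$, so $u\leq u_0$ by definition of $u_0$. The harder direction is $u\geq u_0$, and this is where Lemma \ref{ineq prim} together with Lemma \ref{LEMCONVMAT} are the crucial ingredients.

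To show $u\geq u_0$, fix $x\in M$ and any critical subsolution $w\in\mathcal{F}_-$. For each $n$, pick the Lipschitz calibrating curve $\gamma_x^{\lambda_n}$ from Proposition \ref{prop calibrating} and form the measure $\tilde\mu_x^{\lambda_n}$ on $TM$. Since these curves are equi-Lipschitz, Lemma \ref{LEMCONVMAT} lets us extract a further subsequence so that $\tilde\mu_x^{\lambda_n}$ converges weakly to a Mather measure $\tilde\mu_x$, whose projection $\mu_x=\pi_\#\tilde\mu_x$ is a projected Mather measure. Applying Lemma \ref{ineq prim} to $w$ gives
\begin{equation*}
u_{\lambda_n}(x)\geq w(x)-\int_{TM}w(y)\,d\tilde\mu_x^{\lambda_n}(y,v).
\end{equation*}
Since the supports of the $\tilde\mu_x^{\lambda_n}$ lie in a common compact subset of $TM$, and since $(y,v)\mapsto w(y)$ is a bounded continuous function on $TM$, we can pass to the limit to obtain
\begin{equation*}
u(x)\geq w(x)-\int_{TM}w(y)\,d\tilde\mu_x(y,v)=w(x)-\int_M w(y)\,d\mu_x(y)\geq w(x),
\end{equation*}
where the last inequality uses $w\in\mathcal{F}_-$. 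Taking the supremum over all $w\in\mathcal{F}_-$ yields $u(x)\geq u_0(x)$.

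The main obstacle is conceptual rather than computational: one must carefully exploit that the calibrating curves $\gamma_x^\lambda$ produce, in the limit $\lambda\to 0$, genuine Mather measures (not just closed measures), which is precisely the content of Lemma \ref{LEMCONVMAT} and hinges on the vanishing $\lambda u_\lambda\to 0$. Once this is in place, the argument is a clean duality between the upper bound provided by Mather measures (Proposition \ref{ineq lim}) and the lower bound provided by calibrating curves (Lemma \ref{ineq prim}). Since every subsequential uniform limit of $\{u_\lambda\}$ equals $u_0$, the whole family converges uniformly to $u_0$, which is therefore a critical solution, completing the proof.
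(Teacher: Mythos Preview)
Your proof is correct and follows essentially the same route as the paper: compactness via Ascoli--Arzel\`a, the upper bound $u\leq u_0$ from Proposition~\ref{ineq lim}, and the lower bound $u\geq u_0$ by applying Lemma~\ref{ineq prim} with $w\in\mathcal{F}_-$ and passing to the limit along a further subsequence where $\tilde\mu_x^{\lambda_n}$ converges to a Mather measure via Lemma~\ref{LEMCONVMAT}. The only cosmetic difference is that the paper first writes the key inequality for an arbitrary critical subsolution $w$ and then specializes to $w\in\mathcal{F}_-$, whereas you assume $w\in\mathcal{F}_-$ from the outset.
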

\begin{proof}
By Corollary \ref{cor discounted solution} we know that the functions $u_\lambda$ are equi-Lipschitz and equi-bounded, 
hence it is enough, by the Ascoli--Arzel\`a theorem, to prove that any converging subsequence has  $u_0$ as limit.

Let $\lambda_n\to 0$ be such that $u_{\lambda_n}$ uniformly converge to some $u\in\CC(M,\R)$. 
We have seen in Proposition \ref{ineq lim} that 
$$ 
u(x)\leq u_0(x)\qquad\hbox{for every  $x\in M$}.
$$
To prove the opposite inequality, let us fix $x\in M$.  Let $w$ be a critical subsolution.
By Proposition \ref{ineq prim}, we have
\begin{equation*}
u_{\lambda_n}(x)\geq w(x)-\int_{TM} w(y)\, d \tilde{\mu}^{\lambda_n}_x (y,v).
\end{equation*}
By Lemma \ref{LEMCONVMAT}, extracting a further subsequence, we can assume that
$\tilde{\mu}^{\lambda_n}_x$ converges weakly to a Mather measure $\tilde \mu$
whose projection on $M$ is denoted by $\mu$. Passing to the limit in the last inequality,
we get  
\[
u(x)
\geq 
w(x)-\int_{TM} w(y)\, d {\mu}(y),
\]
where $\mu$ is a projected Mather measure. If we furthermore assume that 
$w\in {\mathcal F}_-$, the set of subsolutions satisfying \eqref{CONDITiON U0}, we obtain
$\int_{TM} w(y)\, d {\mu}(y)\leq 0$, and $u\geq w$.
Hence $u\geq u_0=\sup_{w\in{\mathcal F}_-}w$.\end{proof}

\section{Another formula for the limit of the discounted value functions}
In this section, we will give a characterization of $u_0$ as an infimum, using the Peierls barrier
$h$, and the projected Mather measures. We define $\hat u_0:M\to \R$ by
\begin{equation}\label{def u_0}
\hat u_0(x)=\min_{{\mu}\in \Mis_0(L)} \int_{TM} h(y,x)\, d {\mu} (y),\text{ for every $x\in M$,}
\end{equation}
where $\Mis_0(L)$ is the set of projected Mather measures, see Definition \ref{defMather}.
We establish some properties of $\hat u_0$. 

\begin{lemma}\label{lemma1Hatu0} The function $\hat u_0$ is a critical subsolution. 
\end{lemma}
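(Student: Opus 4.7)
The plan is to reduce the lemma to the Lipschitz characterization of viscosity subsolutions in Proposition \ref{prop subsol equivalent def}: since $H$ is convex in the momentum, a Lipschitz function $u$ is a critical subsolution if and only if $H(x,d_xu)\leq 0$ at almost every point $x\in M$. I will therefore show that $\hat u_0$ is Lipschitz and that this a.e.\ inequality holds, by exploiting the fact that each averaged barrier $h_\mu(x)=\int_M h(y,x)\,d\mu(y)$ is itself a critical subsolution and that the infimum defining $\hat u_0$ is attained.

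The first step is the subsolution property of $h_\mu$. For fixed $y\in M$, the Peierls barrier $h(y,\cdot)$ is a critical subsolution---a standard fact from the weak KAM theory developed in the appendices---equivalently expressed by the domination inequality $h(y,\gamma(b))-h(y,\gamma(a))\leq\int_a^b L(\gamma,\dot\gamma)\,ds$ along every absolutely continuous curve $\gamma:[a,b]\to M$, recalling $c(H)=0$. Integrating against $d\mu(y)$ preserves this inequality and yields the analogous bound for $h_\mu$, which is therefore a critical subsolution; by Proposition \ref{COMPBASIQUE} it is in particular $\kappa_0$-Lipschitz for a constant $\kappa_0$ independent of $\mu\in\Mis_0(L)$.

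Being the pointwise infimum of a $\kappa_0$-Lipschitz family, $\hat u_0$ is itself $\kappa_0$-Lipschitz, so by Rademacher's theorem it is differentiable at almost every $x\in M$. Fix such a differentiability point and set $p:=d_x\hat u_0\in T_x^*M$. The set $\Mis_0(L)$ is compact in the weak-$*$ topology of probability measures on $M$, and the map $\mu\mapsto h_\mu(x)$ is continuous (being the integral of the bounded continuous function $y\mapsto h(y,x)$), so the minimum in \eqref{def u_0} is attained by some $\mu^*\in\Mis_0(L)$. The function $h_{\mu^*}-\hat u_0$ is then non-negative and vanishes at $x$; combined with the differentiability of $\hat u_0$ at $x$, a standard subdifferential calculation yields $p\in D^-h_{\mu^*}(x)$. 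Applying Proposition \ref{prop subsol equivalent def} to the Lipschitz critical subsolution $h_{\mu^*}$ forces $H(x,p)\leq 0$, i.e.\ $H(x,d_x\hat u_0)\leq 0$ at almost every $x$, and a final application of Proposition \ref{prop subsol equivalent def}---this time to $\hat u_0$---shows that $\hat u_0$ is a viscosity subsolution of the critical equation \eqref{eq critical}.

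The conceptual obstacle is that the infimum of a family of viscosity subsolutions is generally \emph{not} a viscosity subsolution; this is bypassed here by working at differentiability points of the envelope together with attainment of the minimum, which transfers the gradient of $\hat u_0$ into the subdifferential of an active $h_{\mu^*}$ on which Proposition \ref{prop subsol equivalent def} applies. The non-trivial inputs from weak KAM theory---that each $h(y,\cdot)$ is a critical subsolution and that $\Mis_0(L)$ is weakly compact---are precisely what the appendices are designed to supply in the present non-Tonelli setting.
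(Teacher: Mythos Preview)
Your proof is correct, but it takes a longer route than the paper's and is built on a misconception about convex Hamiltonians.

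The paper's argument is shorter: it first observes $\hat u_0\geq\min_{M\times M}h>-\infty$, then notes that each $h_\mu$ is a critical subsolution (as a convex combination of the solutions $h_y=h(y,\cdot)$), and finally invokes the general fact that \emph{for $H$ convex in $p$, a finite-valued pointwise infimum of critical subsolutions is again a critical subsolution}. No attainment of the minimum, no compactness of $\Mis_0(L)$, no differentiability-point argument is needed.

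Your stated ``conceptual obstacle''---that infima of viscosity subsolutions are generally not viscosity subsolutions---is true for non-convex Hamiltonians but is simply absent here. The cleanest way to see this in the paper's framework is via the domination characterization of critical subsolutions (Proposition~\ref{prop h_t characterization}): $w$ is a critical subsolution if and only if $w(x)-w(y)\leq h_t(y,x)$ for all $x,y\in M$ and $t>0$. This inequality is manifestly stable under pointwise infimum, so $\hat u_0=\inf_\mu h_\mu$ inherits it directly. Equivalently, one can use Proposition~\ref{prop subsol equivalent def}(ii): the condition $H(x,p)\leq 0$ for all $p\in D^-w(x)$ passes to an infimum because any subdifferential of the infimum at $x$ is a subdifferential of \emph{some} $h_\mu$ touching from above at $x$---and this does not require the minimizing $\mu$ to exist, only that the values $h_\mu(x)$ get arbitrarily close to $\hat u_0(x)$.

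Your approach does work, and the transfer-of-subdifferential step is carried out correctly. But by invoking weak compactness of $\Mis_0(L)$ to secure a minimizer $\mu^*$, you import machinery (Theorem~\ref{teo support minimizing measure}) that the lemma does not need; this obscures the elementary reason the result holds.
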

\begin{proof}
We first remark that $\hat u_0\geq \min_{M\times M} h>-\infty$, where the last strict inequality comes from the continuity of $h$. We then observe that the function $h_\mu:M\to\R,
x\mapsto  \int_{TM} h(y,x)\, d {\mu} (y)$ is a convex combination of the family of
critical solutions $(h_y)_{y\in M}$, where $h_y(x)=h(y,x)$. By the convexity of $H$
in the momentum $p$, it follows that each  $h_\mu$ is a critical subsolution of
\eqref{eq critical}. Again due to the convexity of  $H$ in the momentum $p$,
a finite valued infimum of critical subsolutions is itself
a critical subsolution. Therefore $\hat u_0$ is a critical subsolution.
\end{proof}

\begin{lemma}\label{lemma2Hatu0} We have $u_0\leq \hat u_0$ everywhere on $M$.
\end{lemma}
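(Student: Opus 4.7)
The plan is to unpack the definition of $u_0$ as a supremum and reduce the claim to showing that every individual $w\in {\mathcal F}_-$ satisfies $w\leq \hat u_0$ on $M$. Fix such a $w$ and a point $x\in M$. The key ingredient will be the standard weak KAM domination inequality for the Peierls barrier $h$: for every critical subsolution $w$ and every pair of points $y,x\in M$,
\[
w(x)-w(y)\leq h(y,x).
\]
This should be available from the characterization of $h$ as the cost for the critical equation (developed in the appendices on weak KAM theory), and is essentially the statement that $h(y,\cdot)$ dominates any critical subsolution up to a constant equal to its value at $y$.

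Once this inequality is in hand, the rest is one line of integration. Pick any projected Mather measure $\mu\in \Mis_0(L)$. Integrating the inequality $w(x)\leq w(y)+h(y,x)$ in the variable $y$ against $\mu$ (using that $\mu$ is a probability measure and $h$ is continuous, hence $\mu$-integrable) gives
\[
w(x)\leq \int_M w(y)\,d\mu(y)+\int_M h(y,x)\,d\mu(y).
\]
Since $w\in{\mathcal F}_-$, the defining property \eqref{CONDITiON U0} yields $\int_M w\,d\mu\leq 0$, so
\[
w(x)\leq \int_M h(y,x)\,d\mu(y).
\]
Taking the infimum over $\mu\in\Mis_0(L)$ (which is attained because $\Mis_0(L)$ is weakly compact and the map $\mu\mapsto \int h(y,x)\,d\mu(y)$ is continuous, using that $\hat u_0$ is defined as a minimum) we conclude $w(x)\leq \hat u_0(x)$. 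As $x$ and $w\in {\mathcal F}_-$ were arbitrary, this gives $u_0=\sup_{w\in {\mathcal F}_-}w\leq \hat u_0$ on $M$.

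The only nontrivial point is the domination inequality $w(x)-w(y)\leq h(y,x)$, which I expect to invoke from the weak KAM framework set up in the appendix rather than reprove here; everything else is a direct integration against the Mather measure combined with the defining inequality for the class ${\mathcal F}_-$.
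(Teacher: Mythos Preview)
Your proof is correct and follows essentially the same route as the paper: both fix a subsolution $w\in\mathcal{F}_-$, invoke the domination inequality $w(x)-w(y)\leq h(y,x)$ from Proposition~\ref{prop h}, integrate in $y$ against a projected Mather measure $\mu$, and use the defining condition $\int_M w\,d\mu\leq 0$ to conclude $w(x)\leq h_\mu(x)$. The only difference is cosmetic---you spell out the final infimum over $\mu$ and the supremum over $w$, whereas the paper leaves these implicit.
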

\begin{proof} By the definitions of $u_0$, and $\hat u_0$, it suffices to show that
$u\leq h_\mu$, for every critical subsolution $u$ satisfying $\int_M u\,d\mu\leq 0$, where $\mu$ is a projected Mather measure on $M$.
In fact, by Proposition \ref{prop h}, we have 
$$u(x)\leq u(y)+h(y,x),$$
for every $x, y\in M$. If we integrate with respect to $y$, we get $u(x)\leq  \int_M u\,d\mu +h_\mu(x)$. But $\int_M u\,d\mu\leq 0$ by assumption.
\end{proof}
\begin{teorema} We have $u_0=\hat u_0$ everywhere on $M$. In particular, the function $\hat u_0$ is a critical solution.
\end{teorema}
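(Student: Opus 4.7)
The plan is to show that $\hat u_0$ belongs to the family $\mathcal{F}_-$. Since $u_0=\sup\mathcal{F}_-$ by its very definition, this gives $\hat u_0\leq u_0$, which together with the reverse inequality from Lemma \ref{lemma2Hatu0} establishes $u_0=\hat u_0$; the fact that $\hat u_0$ is then a critical solution follows at once from Theorem \ref{theo main}. Since $\hat u_0$ is already a critical subsolution by Lemma \ref{lemma1Hatu0}, the whole task reduces to checking
\begin{equation*}
\int_M \hat u_0\, d\mu \leq 0 \qquad \text{for every projected Mather measure } \mu.
\end{equation*}

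To verify this, I would start from the pointwise bound $\hat u_0(x) \leq h_\nu(x)$, valid for every projected Mather measure $\nu$ by the definition of $\hat u_0$ as an infimum, and integrate against $\mu$:
\begin{equation*}
\int_M \hat u_0\, d\mu \leq \int_M\!\int_M h(y,x)\, d\nu(y)\, d\mu(x).
\end{equation*}
The naive choice $\nu=\mu$ is not sufficient, since the right-hand side then equals $\tfrac{1}{2}\int\!\int [h(y,x)+h(x,y)]\, d\mu(y)\, d\mu(x)$, which is only known to be nonnegative (from the universal inequality $h(y,x)+h(x,y)\geq 0$). The fix is to pass through the ergodic decomposition: lift $\mu$ to a Mather measure $\tilde\mu$ on $TM$ and write $\tilde\mu = \int \tilde\mu_\omega\, dP(\omega)$ with each $\tilde\mu_\omega$ an ergodic invariant probability measure. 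Since $\int L\, d\tilde\mu=0$ and $\int L\, d\tilde\nu \geq 0$ for every invariant probability, $P$-almost every $\tilde\mu_\omega$ is itself a Mather measure, and its projection $\mu_\omega$ is an ergodic projected Mather measure.

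For each such $\mu_\omega$, the crucial weak KAM input---established in the appendices---is that $\mu_\omega$ is concentrated on a single static class of the projected Aubry set, on which $h(y,x)+h(x,y)=0$ identically. Applying the previous inequality with both $\nu$ and $\mu$ replaced by $\mu_\omega$ then gives
\begin{equation*}
\int_M \hat u_0\, d\mu_\omega \leq \int\!\int h(y,x)\, d\mu_\omega(y)\, d\mu_\omega(x) = \tfrac{1}{2}\int\!\int [h(y,x)+h(x,y)]\, d\mu_\omega(y)\, d\mu_\omega(x) = 0.
\end{equation*}
Integrating with respect to $P$ yields $\int_M \hat u_0\, d\mu \leq 0$, which completes the verification that $\hat u_0\in\mathcal{F}_-$ and hence the proof. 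The main obstacle is the ergodic-decomposition step, which rests on two non-trivial weak KAM facts: that ergodic components of Mather measures are themselves Mather measures, and that ergodic projected Mather measures live on a single static class of the Aubry set (on which the Peierls barrier is antisymmetric). These are precisely the structural results that the paper must carefully reestablish in the continuous Lagrangian setting in its appendices; once they are available, the argument collapses to the integration displayed above.
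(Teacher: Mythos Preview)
Your overall strategy---showing $\hat u_0\in\mathcal F_-$ so that $\hat u_0\leq u_0$---is sound in spirit, but the execution has a genuine gap at the ergodic-decomposition step. In the setting of this paper the Hamiltonian is merely continuous, and the authors explicitly point out that there is \emph{no} Euler--Lagrange flow available; Mather measures are defined as \emph{closed} measures minimizing $\int L$, not as flow-invariant measures. Thus the phrase ``write $\tilde\mu=\int\tilde\mu_\omega\,dP(\omega)$ with each $\tilde\mu_\omega$ an ergodic invariant probability measure'' has no meaning here, and the two ``non-trivial weak KAM facts'' you invoke (ergodic components of Mather measures are Mather measures; ergodic projected Mather measures are supported on a single static class) are \emph{not} established in the appendices. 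The appendices prove that Mather measures are supported in $\tilde{\mathcal A}$ and that for each $y\in\mathcal A$ there exists a Mather measure projected into the static class of $y$, but they never decompose a general Mather measure into pieces living on single static classes. So as written your argument rests on machinery that is unavailable in this generality.

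The paper's proof avoids all of this by a much more elementary device. Rather than proving $\hat u_0\in\mathcal F_-$, it compares $u_0$ and $\hat u_0$ only on the Aubry set $\mathcal A$, which suffices by the comparison principle (Theorem~\ref{teo uniqueness set}) since $\hat u_0$ is a subsolution and $u_0$ a solution. For a fixed $x\in\mathcal A$, the function $y\mapsto -h(y,x)$ is itself a critical subsolution (Proposition~\ref{prop h}(e)); adding the constant $\hat u_0(x)=\inf_{\mu}\int_M h(z,x)\,d\mu(z)$ produces a subsolution $w\in\mathcal F_-$, hence $u_0\geq w$. Evaluating at $x$ and using $h(x,x)=0$ gives $u_0(x)\geq\hat u_0(x)$ directly. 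This argument uses only the Peierls barrier and the definition of $\mathcal F_-$, with no dynamical decomposition at all.
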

\begin{proof} Since, by Lemma \ref{lemma2Hatu0}, we already know that $u_0\leq \hat u_0$, 
we have to show the reverse inequality $u_0\geq \hat u_0$. 
By Lemma \ref{lemma1Hatu0}, the function  $\hat u_0$ is  a subsolution  of the
critical Hamilton-Jacobi equation \eqref{eq critical}. Moreover, the function $u_0$
is a solution of \eqref{eq critical}. Therefore by Theorem \ref{teo uniqueness set}, 
it suffices to show that $\hat u_0\leq u_0$ for every $x$ in the projected Aubry set 
$\mathcal A$. Fix $x\in \mathcal{A}$, by  part e) of Proposition \ref{prop h}, the function
$y\mapsto -h(y,x)$ is a critical subsolution. \marginpar{``,.''$\to$ ``.''}
 Hence the function $y\mapsto w(y)= -h(y,x)+\inf_{\mu\in\Mis_0(L)}\int_Mh(z,x)\,d\mu(z)$ is also a critical subsolution which satisfies condition \eqref{CONDITiON U0}. This implies $u_0\geq w$ everywhere. In particular
$$u_0(x)\geq-h(x,x)+\inf_{\mu\in\Mis_0(L)}\int_Mh(z,x)\,d\mu(z).$$
Using $h(x,x)=0$ for $x\in \mathcal{A}$, we get $u(x)\geq \inf_{\mu\in\Mis_0(L)}\int_Mh(y,x)\,d\mu(y)=\hat u_0(x)$.
\end{proof}
We conclude this section with the case $L+ c(H)\geq 0$.
\begin{prop} Suppose that the constants are critical subsolutions, or equivalently 
that $L+ c(H)\geq 0$, then the projected Mather set $\mathcal{M}$ and 
the projected Aubry set $\mathcal{A}$ are both equal to $\{x\in M\mid L(x,0)+c(H)=0\}$,
and for every 
$x\in M$, we have $u_0(x)=\min\{h(y,x)\mid L(y,0)+c(H)=0\}$.
\end{prop}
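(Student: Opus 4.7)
The plan is to normalize $c(H) = 0$ (by replacing $H$ with $H - c(H)$, so that $L \geq 0$), set $N := \{x \in M \mid L(x, 0) = 0\}$, and establish the chain of inclusions $N \subseteq \mathcal{M} \subseteq \mathcal{A} \subseteq N$, from which the first statement follows. The formula for $u_0$ will then be deduced from the preceding theorem identifying $u_0$ with $\hat u_0$.

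The inclusion $N \subseteq \mathcal{M}$ is immediate via Dirac masses: for $y \in N$ the measure $\tilde\mu_y := \delta_{(y,0)}$ on $TM$ is a closed probability measure because $\int_{TM} d_x\varphi(v)\, d\tilde\mu_y = d_y\varphi(0) = 0$ for every $\varphi \in C^1(M, \R)$, and it satisfies $\int L\, d\tilde\mu_y = L(y, 0) = 0 = c(H)$; hence $\tilde\mu_y \in \tilde\Mis_0(L)$, its projection $\delta_y$ belongs to $\Mis_0(L)$, and so $y \in \mathcal{M}$. The inclusion $\mathcal{M} \subseteq \mathcal{A}$ is the standard fact from the general theory recalled in the Mather-theory appendix.

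The hard direction, and the main obstacle, is $\mathcal{A} \subseteq N$. Given $x \in \mathcal{A}$ we have $h(x, x) = 0$, and since constants are critical subsolutions by hypothesis, on the Aubry set there exists a calibrated curve $\gamma : \R \to M$ for the zero subsolution with $\gamma(0) = x$. Calibration yields $\int_a^b L(\gamma, \dot\gamma)\, ds = 0$ for all $a < b$, hence $L(\gamma, \dot\gamma) = 0$ almost everywhere. The equality case of Fenchel's inequality applied to $u \equiv 0$ then forces $H(\gamma(s), 0) = 0$ along $\gamma$ and $\dot\gamma(s) \in \partial_p H(\gamma(s), 0)$; combining this with invariance of $\mathcal{A}$ under the calibrated dynamics and convexity of $H$ in the momentum, one identifies the constant curve at $x$ as an admissible calibrator, giving $L(x, 0) = 0$, i.e.\ $x \in N$.

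For the formula, the preceding theorem gives $u_0(x) = \hat u_0(x) = \inf_{\mu \in \Mis_0(L)} \int_M h(y, x)\, d\mu(y)$. Each $\delta_y$ with $y \in N$ is a projected Mather measure by the first step, so $u_0(x) \leq h(y, x)$ for every $y \in N$. Conversely, every projected Mather measure is supported on $\mathcal{M} = N$, so $\int_M h(y, x)\, d\mu(y) \geq \min_{y \in N} h(y, x)$; continuity of $h(\cdot, x)$ and closedness (hence compactness) of $N$ in $M$ ensure the minimum is attained, yielding $u_0(x) = \min\{h(y, x) \mid y \in N\}$.
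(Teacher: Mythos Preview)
Your overall strategy is the paper's: normalize $c(H)=0$, run the chain $N\subseteq\mathcal{M}\subseteq\mathcal{A}\subseteq N$, and deduce the formula from $u_0=\hat u_0$ together with the Dirac-mass description of projected Mather measures. The inclusion $N\subseteq\mathcal{M}$ via $\delta_{(y,0)}$ and the final formula argument are correct and match the paper almost verbatim.

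The problem is your argument for $\mathcal{A}\subseteq N$. Everything up to ``$L(\gamma(s),\dot\gamma(s))=0$ a.e., hence $H(\gamma(s),0)=0$ and $\dot\gamma(s)\in\partial_pH(\gamma(s),0)$'' is fine; this is exactly what one gets by specializing Lemma~\ref{AubryProjSub} to the constant subsolution $u\equiv 0$, and it places $(\gamma(s),\dot\gamma(s))$ in $\leb(0)=\{(y,v):L(y,v)=0\}$. But your concluding sentence---``one identifies the constant curve at $x$ as an admissible calibrator, giving $L(x,0)=0$''---is a non sequitur. You have only shown $L(x,v)=0$ for \emph{some} $v$, not for $v=0$; convexity of $H$ alone does not force the static curve to be constant, and nothing in ``invariance of $\mathcal{A}$ under the calibrated dynamics'' produces the constant curve as a calibrator unless you already know $L(x,0)=0$.

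This gap cannot be repaired under the stated hypotheses. On $\mathbb{T}^1$ take $L(x,v)=(v-1)^2$, so $H(x,p)=p+p^2/4$, $c(H)=0$, and $L\ge0$; the unit-speed curves $s\mapsto x+s$ are static, hence $h\equiv0$ and $\mathcal{A}=\mathcal{M}=\mathbb{T}^1$, while $N=\{x:L(x,0)=0\}=\emptyset$. So the inclusion $\mathcal{A}\subseteq N$ is simply false here. The paper's own one-line appeal to Lemma~\ref{AubryProjSub} shares the lacuna: that lemma, applied to $u\equiv0$, only yields $\mathcal{A}\subset\pi(\leb(0))=\{x:\exists\,v,\ L(x,v)=0\}$, not $\mathcal{A}\subset N$.
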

\begin{proof} Replacing $L$ by $L+c(H)$, we can assume that $c(H)=0$. 
Since the constant functions are {\rm C}$^1$ critical subsolutions, 
by Lemma \ref{AubryProjSub}, we have $\mathcal{A}\subset \{x\in \mid L(x,0)=0\}$.
Next we remark that the Dirac mass $\tilde \delta_{(x,0)}$ at $(x,0)$ is a closed 
measure on $TM$ for any $x\in M$. If $L(x,0)=0$, we have $\int_{TM}L\,d\tilde \delta_{(x,0)}=0$.
Therefore $\delta_{x,0}$ is a Mather minimizing measure. It follows that 
$\{x\in M\mid L(x,0)=0\} \subset \mathcal{M}$. Since $\mathcal{M}\subset \mathcal{A}$, we obtain
$\mathcal{M}=\mathcal{A}=\{x\in \mid L(x,0)=0\}$. If we set $u_1(x)=\min\{h(y,x)\mid L(y,0)=0\}$, since the Dirac mass at any $y$ with $L(y,0)=0$ is a projected Mather measure, we obtain $u_0=\hat u_0\leq u_1$. On the other hand since the support of a Mather minimizing measure $\mu$ is contained in $\{y\in \mid L(y,0)=0\}$, we get $\int_Mh(y,x)\, d\mu(y)\geq \int_Mu_1(x)\, d\mu(y)= u_1(x)$. Therefore $u_1=u_0$.
\end{proof}

\begin{appendix}
\section{Clarke calculus and approximation of subsolutions}
\subsection{Clarke calculus}\label{ClarkeDer}

We will suppose in the sequel that $N$ is a connected manifold (not necessarily compact).
It will be useful to have a Riemannian metric $g$ on $N$. 
For $(x,v)\in TN$, we will denote by $\lVert v\rVert_x$ the Riemannian norm $\sqrt{g_x(v,v)}$. For $(x,p)\in T^*N$, 
we will also denote by $\lVert p\rVert_x$ the norm 
of $p$ obtained by duality from the Riemannian norm on $T_xN$. We will endow $N$ with the distance $d$ obtained from the Riemannian metric.

We will denote by $\delta$ (resp.\ $\delta^*$) a distance defining the topology of $TN$ (resp.\ $T^*N$). Replacing $\delta[(x,v),(x',v')]$ 
(resp.\ $\delta^*[(x,p),(x',p')]$), if necessary, by $\delta[(x,v),(x',v')]+d(x,x')$ 
(resp.\ $\delta^*[(x,p),(x',p')]+d(x,x')$), we will assume in the sequel that

\begin{align*}
\delta[(x,v),(x',v')]\geq d(x,x'), &\text{\  for all $(x,v),(x',v')\in TN$,}\\
\delta[(x,p),(x',p')]\geq d(x,x'), &\text{\ for all $(x,p),(x',p')\in T^*N$.}.
\end{align*}

If $u:N\to \R$ is locally Lipschitz, recall that $u$ is differentiable almost everywhere, by Rademacher's theorem. We will denote by $\partial^*u(y)$ the set of 
{\em reachable derivatives} (also called {\em reachable gradients}\/) of $u$ at $y$, that is the set
\[
\partial^* u(y)=\{ p\in T_y^*N\,:\,(x_n, d_{x_n} u)\to (y,p)\ \hbox{in $T^*M$,}\ \hbox{$u$ is differentiable at $x_n$}\,\}.
\]
The {\em Clarke's generalized derivative} (or {\em gradient}\/) $\partial^c u(y)$ is the closed convex hull of $\partial^* u(y)$ in 
$T^*_y N$.  
The set $\partial^c u(y)$ contains both $D^+u(y)$ and $D^-u(y)$ defined in \S 2. In particular 
$d_y u\in \partial^c u(y)$ at any differentiability point $y$ of $u$. Moreover,
the set valued map $x\mapsto \left(x,\partial^c u(x)\right)$ from $N$ to $T^*N$ 
is upper semicontinuous with respect to set inclusion. Recall that the upper
semicontinuity means that for every open
$O$ subset of $T^*N$, the subset  $\{x\in N\mid  \left(x,\partial^c u(x)\right)\subset O\}$
is open in $N$. We refer the reader to \cite{Cl} for a detailed treatment of the subject.

We denote by $\Gr{\partial^cu}$, the graph of the set function $x\mapsto \partial^cu(x)$, i.e.\ 
$$\Gr{\partial^cu}=\{(x,p)\in T^*N\mid p\in \partial^cu(x)\}.$$
By the upper semi-continuity of $x\mapsto \partial^cu(x)$, this set is closed; moreover, for every compact subset $K\in N$, the set $\{(x,p)\in \Gr{\partial^cu}\mid x\in K\}$ is compact.

For such a locally Lipschitz function $u:N\to \R$, it is convenient to introduce the Clarke gauges $\partial^+u,\partial^-u:TN\to\R$ defined by
\begin{align*}
\partial^+u(x,v)&=\sup\{p(v)\mid p\in \partial^cu(x)\}=\sup\{p(v)\mid p\in \partial^*u(x)\}\\
\partial^-u(x,v)&=\inf\{p(v)\mid p\in \partial^cu(x)\}=\inf\{p(v)\mid p\in \partial^*u(x)\}.
\end{align*}
Note that both right equalities in the definitions above follow from the fact 
that $\partial^cu(x)$ is the convex envelop of $\partial^*u(x)$. Moreover,
the sup and the inf in the definitions are attained by the compactness of $\partial^cu(x)$ and $\partial^*u(x)$. 

Since $\partial^c(-u)(x)=-\partial^cu(x)$, we have $\partial^-u(x,v)=-\partial^+(-u)(x,v)$--note that $\partial^+u(x,v)$ is denoted by $u^\circ(x,v)$ in \cite{Cl}.

Since $x\mapsto \partial^cu(x)$ is upper semi-continuous with compact values,
the function $\partial^+u$ is upper semi-continuous, and the function
$\partial^-u$ is lower semi-continuous.
\subsection{Approximation of subsolutions by smooth almost subsolutions}
\begin{teorema}\label{EncoreUneVariante} Let $u:N\to \R$ be a locally Lipschitz function.
Assume $ U$ is an open subset of $N$, and suppose that $H:T^*U\to \R$  is a
continuous function such that
$$  H(x,p)<0,\text{ for every $(x,p)\in \Gr{\partial^cu}$, with $x\in U$.}$$
Then for every continuous function $\epsilon: U\to ]0,+\infty [$, we can find a continuous 
$u_\epsilon:N\to \R$ such that
\begin{enumerate}
\item[1)] the function $u_\epsilon$ is {\rm C}$^\infty$ on $U$, and $H(x,d_xu_\epsilon)<0$
for every $x\in U$;
\item[2)] for every $x\in U$, we have $\lvert u(x)-u_\epsilon(x)\lvert \leq \epsilon(x)$;
\item[3)] the functions $u$ and $u_\epsilon$ are equal on $N\setminus U$. Moreover, at a point $x\in N\setminus U$, the function $u_\epsilon$ is differentiable if and only if $u$ is differentiable at $x$. Furthermore at such a point $x\in N\setminus U$ of differentiability, we have $d_xu_\epsilon=d_xu$.
\end{enumerate}
\end{teorema}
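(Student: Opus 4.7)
My plan is to construct $u_\epsilon$ as a patched local mollification of $u$, in the spirit of Fathi--Siconolfi. First, I would convert the hypothesis into a localized quantitative statement: using the upper semicontinuity of the set-valued map $x\mapsto \partial^c u(x)$ together with the compactness of its values and the continuity of $H$, I will show that for every $x\in U$ there exist an open neighborhood $W_x\subset U$ and $\eta_x>0$ such that $H(y,p)<0$ whenever $y\in W_x$ and $\dist(p,\partial^c u(x))<\eta_x$, the fiberwise distance being taken in some local trivialization. The strict negativity on the compact set $\partial^c u(x)$ in a single fiber extends to such a tube by compactness and continuity of $H$, and upper semicontinuity guarantees that $\partial^c u(y)$ is absorbed into that tube when $y\in W_x$.

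Next I would pick a locally finite atlas $(V_i,\psi_i)_{i\in I}$ of $U$ with $\overline{V_i}\subset W_{x_i}$ for some $x_i$, arranged so that $\mathrm{diam}(V_i)$ and $d(V_i,\partial U)$ can be made arbitrarily small as $i$ approaches $\partial U$. In each chart I mollify: $u_i=u*\rho_{\delta_i}$ at scale $\delta_i>0$. By Rademacher's theorem, the differential $d_x u_i$ is a convex average of $\{d_z u:z\in B_{\delta_i}(x)\}$; combined with the upper semicontinuity of $\partial^c u$ and the convexity of its values, this places $d_x u_i$ in an arbitrarily small neighborhood of $\partial^c u(x)$ for $\delta_i$ small enough. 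Simultaneously, the local Lipschitz bound gives $|u_i-u|\leq \Lip(u,\overline{V_i})\,\delta_i$ on $V_i$.

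I would then patch using a smooth partition of unity $(\varphi_i)$ subordinate to $(V_i)$: set $u_\epsilon=\sum_i\varphi_i u_i$ on $U$ and $u_\epsilon=u$ on $N\setminus U$. The key computation uses $\sum_i d_x\varphi_i=0$ to rewrite
$$d_x u_\epsilon=\sum_i\varphi_i(x)\,d_x u_i+\sum_i\bigl(u_i(x)-u(x)\bigr)\,d_x\varphi_i.$$
The first sum is a convex combination of vectors in a small neighborhood of the convex set $\partial^c u(x)$, hence stays in the tube where $H<0$ from Step~1; the second sum has norm bounded by $\sum_i\Lip(u,\overline{V_i})\,\delta_i\,\|d_x\varphi_i\|$, a finite sum at each point $x$, and is made as small as we wish by shrinking the $\delta_i$. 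To enforce (2) and (3), I require in addition $\delta_i\leq \epsilon(x)/\bigl(2\,\Lip(u,\overline{V_i})\bigr)$ for $x\in V_i$ and $\delta_i\leq d(V_i,\partial U)^2$: the first delivers $|u_\epsilon-u|\leq\epsilon$ on $U$, the second forces $|u_\epsilon-u|=o\bigl(d(\cdot,\partial U)\bigr)$ near $\partial U$. A first-order Taylor argument at $x\in N\setminus U$ where $u$ is differentiable then shows $u_\epsilon$ is differentiable there with the same differential, and conversely.

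The main obstacle is precisely this coordination of the scales $\delta_i$: the partition-of-unity derivatives $\|d_x\varphi_i\|$ typically blow up like $1/\mathrm{diam}(V_i)$ as $V_i$ shrinks toward $\partial U$, so the requirement that the ``second sum'' stay negligible competes with the approximation requirements encoded by $\epsilon$ and by $o(\dist(\cdot,\partial U))$. Since each of these constraints has the form ``$\delta_i\leq$ some positive quantity not vanishing on $V_i$'', a diagonal choice satisfies them all simultaneously; the required care lies in checking that this choice is consistent with the locally finite cover, so that at each $x\in U$ only finitely many terms appear and all estimates are uniform on compact subsets of $U$.
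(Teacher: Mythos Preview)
Your proposal is correct and follows a genuinely different, more hands-on route than the paper. The paper proceeds in two modular steps: first it proves an auxiliary lemma constructing a continuous Hamiltonian $\hat H:T^*U\to[-1,+\infty[$ that is \emph{convex in the fibers}, satisfies $\hat H\leq 0$ on $\Gr{\partial^c u}\cap T^*U$, and has $H<0$ on the sublevel $\{\hat H\leq 1\}$; then it invokes an existing smoothing theorem for convex Hamiltonians (from Fathi--Siconolfi) as a black box to produce a $C^\infty$ function $u_\epsilon$ on $U$ with $\hat H(x,d_xu_\epsilon)\leq 1$ and $\lvert u_\epsilon-u\rvert\leq\hat\epsilon$, where $\hat\epsilon=\min(\epsilon,d^2(\cdot,N\setminus U))$. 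Condition~3) then follows exactly as you indicate, from $\lvert u_\epsilon-u\rvert\leq d^2(\cdot,N\setminus U)$.

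You instead carry out the mollification-and-patching construction directly, sidestepping the reduction to convex $H$ by exploiting the convexity of $\partial^c u(x)$: the convex combination $\sum_i\varphi_i(x)\,d_xu_i$ stays near the convex set $\partial^c u(x)$, hence inside the tube where $H<0$, regardless of whether $H$ itself is convex. This is essentially what the black-boxed Fathi--Siconolfi theorem does internally, so you are reproving it in passing; the paper even remarks that its reduction lemma ``is not necessary'' and that one could appeal directly to a more general smoothing result. Your approach is self-contained and exposes the mechanism, at the cost of the scale-coordination bookkeeping you correctly flag; the paper's approach is shorter and more modular but relies on external references. Both handle the boundary behavior via the same $d^2(\cdot,\partial U)$ device.
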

Although the following lemma is not necessary (we could use directly 
\cite[Theorem 8.1]{fatmad}), it allows to reduce to the case where $H$ is convex in the fibers.
\begin{lemma}\label{reducetoconvex} Under the assumptions of Theorem \ref{EncoreUneVariante}, we can find a continuous function $\hat H:T^*U\to [-1, +\infty[$ such that
\begin{enumerate}
\item[\rm 1)] the function $\hat H$  is convex in the fibers, i.e. for every $x\in U$, the map $p\to \hat H(x,p)$ is convex on the vector space $T^*_xN$;
\item[\rm 2)] on the the intersection $\Gr{\partial^cu}\cap T^*U $, the function $\hat H$ is $\leq 0$;
\item[\rm 3)] on the set $\{(x,p)\in T^*U\mid \hat H(x,p)\leq 1\}$, the function $H$ is $<0$.
\end{enumerate}
\end{lemma}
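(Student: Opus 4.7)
My plan is to build $\hat H$ by patching local pieces via a partition of unity: convexity in the fibers is preserved automatically by forming non-negative linear combinations of convex functions, and the sublevel constraint (3) will be enforced by the elementary observation that a convex combination of reals $\leq 1$ forces at least one summand to be $\leq 1$.

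For the local step, I fix $x_0 \in U$ and work in a coordinate chart trivializing $T^*U$ near $x_0$. Since $\partial^c u(x_0)$ is a compact convex subset of the open set $\{p : H(x_0,p)<0\}$, I choose $\epsilon_{x_0}>0$ small enough that the closed tube $K_{x_0} := \partial^c u(x_0) + \overline{B}(0,\sqrt{2}\,\epsilon_{x_0})$ in the trivialization is contained in $\{p : H(x_0,p)<0\}$. Using the upper semi-continuity of $\partial^c u$, the continuity of $H$, and compactness of $K_{x_0}$, I then shrink to a relatively compact open neighborhood $V_{x_0}$ of $x_0$ such that $\partial^c u(x) \subset \partial^c u(x_0) + B(0,\epsilon_{x_0})$ for every $x\in V_{x_0}$ and $H(x,p)<0$ for every $(x,p)\in V_{x_0}\times K_{x_0}$. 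I set
\[
\hat H_{x_0}(x,p) := \frac{\operatorname{dist}(p,\partial^c u(x_0))^2}{\epsilon_{x_0}^2}-1,
\]
computed in the trivialization. The distance to a convex set being convex---and the square of a non-negative convex function being convex---this $\hat H_{x_0}$ is continuous on $T^*_{V_{x_0}}U$, convex in the fiber, valued in $[-1,+\infty)$, strictly negative on the Clarke graph over $V_{x_0}$, and its sublevel set $\{\hat H_{x_0}\leq 1\}$ is contained in $V_{x_0}\times K_{x_0}\subset\{H<0\}$.

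For the patching, I select a locally finite refinement $\{V_i\}_{i\in I}$ of $\{V_{x_0}\}_{x_0 \in U}$ with each $\overline{V_i}$ compact inside some $V_{x_0}$, associated local functions $\hat H_i$, and a continuous partition of unity $\{\phi_i\}$ with $\operatorname{supp}\phi_i\subset V_i$. I then define
\[
\hat H(x,p) := \sum_{i\in I}\phi_i(x)\,\hat H_i(x,p),
\]
with the convention $\phi_i\hat H_i \equiv 0$ off $T^*_{V_i}U$. Each product is continuous on $T^*U$ because $\operatorname{supp}\phi_i$ is a compact subset of the open $V_i$; the sum is locally finite, convex in the fiber as a non-negative combination of convex functions, and bounded below by $-1$ since each $\hat H_i\geq -1$. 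Condition (2) is immediate: every index $i$ with $\phi_i(x)>0$ has $x\in V_i$ and therefore $\hat H_i(x,p)\leq 0$ whenever $p\in\partial^c u(x)$. For condition (3), if $\hat H(x,p)\leq 1$ and $J=\{i : \phi_i(x)>0\}$, then $\sum_{i\in J}\phi_i(x)\hat H_i(x,p)\leq 1$ is a genuine convex combination with $\sum_{i\in J}\phi_i(x)=1$, so $\hat H_{i^*}(x,p)\leq 1$ for some $i^*\in J$; since $x\in V_{i^*}$, the local property of $\hat H_{i^*}$ then forces $H(x,p)<0$.

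The main obstacle is that no straightforward global formula does the job: the natural candidate $\operatorname{dist}(p,\partial^c u(x))^2/\rho(x)^2-1$ is only lower semi-continuous in $(x,p)$, because $\partial^c u$ is upper semi-continuous---not continuous---as a set-valued map, and both upper and lower regularizations violate one of the required conditions. The partition of unity circumvents this by ``freezing'' $\partial^c u$ at a base point in each local piece, and the convex-combination minimum trick then transfers the local sublevel control to the global one while convexity in the fiber is preserved by non-negative linear combinations.
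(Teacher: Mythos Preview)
Your proof is correct and follows essentially the same route as the paper's: build local models by freezing $\partial^c u$ at a base point and measuring the distance to that fixed convex set, then patch with a partition of unity and use the convex-combination argument to transfer the sublevel constraint. The only cosmetic difference is that the paper uses $\hat H_{x_0}(x,p)=\epsilon^{-1}\operatorname{dist}(p,\partial^c u(x_0))-1$ (distance, with the $\{\hat H\le 1\}$ sublevel corresponding to the $2\epsilon$-tube) rather than your squared-distance variant with the $\sqrt{2}\,\epsilon$-tube; both are convex in $p$ and the rest of the argument is identical.
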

\begin{proof} Assume first that we could find an open cover $(U_i)_{i\in I} $ of $U$ and a family of continuous functions $\hat H_i:T^*U_i\to \R$ such that
1) 2) and 3) are satisfied with $U_i $ instead of $U$, for each $i\in I$. Choosing a partition of unity $(\varphi_i)_{i\in I}$ subordinated to the open cover 
$(U_i)_{i\in I} $ of $U$, we define $\hat H: T^*U\to \R$ by $\hat H(x,p)=\sum_{i\in I}\varphi_i(x)\hat  H_i(x,p)$. 
It is obvious 1) and 2) are true. For 3), assume that we have
$\hat H(x,p)\leq 1$. Since $\sum_{i\in I}\varphi_i(x)=1$, and $\hat H(x,p)=\sum_{i\in I}\varphi_i(x)\hat  H_i(x,p)$, we can find an $i\in I$ 
such that $\varphi_i(x)>0$, and $\varphi_i(x)\hat  H_i(x,p)\leq \varphi_i(x)$. This implies that $x\in U_i$, and $\hat H_i(x,p)\leq 1$. By the assumption
on $\hat H_i$, we obtain $H(x,p)<0$. This proves 3) for $\hat H$.

To finish the proof of the lemma, given an $x\in U$, we show that we can find an open neighborhood $V_x$ of $x\in U$, and 
$\hat H_x:T^*V_x\to \R$ such that 1) 2) and 3) are satisfied by $\hat H_x$ on $V_x$. Since this is a local statement, we can assume that 
$U=\R^n$, and $x=0$. We endow $\R^n$ with its usual Euclidean scalar product. We denote respectively by $\lVert\cdot \rVert_2, d_2$ the Euclidean norm and distance on $\R^n$. We also identify $\R^n$ with its dual using the scalar product, Therefore $T^*U=\R^n\times\R^n$. Since $H$ is $<0$ on the compact subset 
$\{0\}\times \partial^cu(0)$, we can find an open neighborhood $V$ of $0$ in $\R^n$, and $\epsilon>0$, such that $H$ is still $<0$ on
$V\times \bar V_{2\epsilon}(\partial^cu(0))$, where $\bar V_{\rho}(\partial^cu(0))=\{p\in \R^n\mid d_2(p, \partial^cu(0))\leq \rho\}$, for $\rho\geq 0$.
By the upper semi-continuity of $\partial^cu$, cutting down the neighborhood $V$ of $0$ if necessary,
we can assume that $ \partial^cu(y)\subset V_{\epsilon}(\partial^cu(0))$, for every $y\in V$.
It remains to define $\hat H_0:V\times \R^n\to [-1,\infty[$ by
$$\hat H_0(x,p)=\frac 1\epsilon d_2(p, \partial^cu(0))-1.$$
The convexity of $\hat H_0$ in $p$ is a consequence of the convexity of $\partial^cu(0)$. Properties 2) and 3) follow from the choice of $\epsilon$ and $V$.
\end{proof}
\begin{proof}[Proof of Theorem \ref{EncoreUneVariante}] Set $A=N\setminus U$, and define $\hat \epsilon: N\to \R$ by
$$\hat\epsilon(x)=\begin{cases}  \min (\epsilon(x) ,d^2(x, A)), &\text{ if $x\in U$,}\\
0, &\text{ if $x\in A$},
\end{cases}
$$
where $d^2(x, A))
=\inf\{d(x,y)^2\mid y\in A\}$, with $d$ the Riemannian distance. Of course the function $\hat\epsilon$ is continuous on $N$, $\hat \epsilon\leq d^2(\cdot, A)$ everywhere on $N$.

Let $\hat H:TU\to\R$ be given by Lemma \ref{reducetoconvex}. We can apply \cite[Theorem 10.6] {FathiSurvey} or \cite[Theorem 6.2]{FS05} to $\hat H$ (see also \cite{CIPP}), which is convex in the fibers, to obtain  a {\rm C}$^\infty$ function $u_\epsilon:U\to\R$ such that $\hat H(x,d_xu_\epsilon)\leq 1$ on $U$, and $\lvert u_\epsilon(x)-u(x)\rvert\leq \hat\epsilon(x)$, for every $x\in U$. By the choice of $\hat H$, and the properties of $u_\epsilon$, conditions 1)  and 2) of the theorem are satisfied.
Since $\hat \epsilon(\cdot)\leq d^2(\cdot, A)$, the function $w= u_\epsilon-u$ can be extended by $0$ on the closed set $A=N\setminus U$ to a continuous function on $N$.
Moreover, for $x_0\in A, x\in N$, we have $\lvert w(x)-w(x_0)\rvert=\lvert w(x)\rvert
\leq d^2(x, A)\leq d(x,x_0)^2$, therefore $w$ is differentiable at every point of $A$,
with derivative $0$. If we extend $u_\epsilon$ to $N$, by $u_\epsilon=w+u$,
it is now obvious that condition 3) is satisfied.
\end{proof}
The following known lemma is useful.
\begin{lemma} Let $H:T^*U\to \R$ be a continuous function, then the function $\mathbb{H}(u): U\to \R$ defined by
$$\mathbb{H}(u)(x)=\sup\{H(x,p)\mid p\in \partial^cu(x)\}$$
is finite valued and upper semi-continuous. Therefore, for every lower semi-continuous function $\theta:U\to \R$, with $\mathbb{H}(u)<\theta$ everywhere
on $U$, there exists a continuous function
$\varphi:U\to \R$ such that $\mathbb{H}(u)<\varphi<\theta$. In particular, the function $\mathbb{H}(u)$ is the point-wise infimum of the family of continuous functions $\varphi: U\to \R$ such that  $\varphi>\mathbb{H}(u)$ everywhere on $U$.
\end{lemma}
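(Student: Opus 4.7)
The plan is to establish the four claims in the statement one at a time.

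\textbf{Finiteness.} Since $u$ is locally Lipschitz, for each $x\in U$ the set $\partial^*u(x)$ is bounded (any reachable derivative is bounded by a local Lipschitz constant of $u$ at $x$), hence $\partial^cu(x)$, its closed convex hull, is a compact convex subset of $T_x^*N$. Therefore $p\mapsto H(x,p)$, being continuous, attains its maximum on $\partial^cu(x)$, so $\mathbb{H}(u)(x)$ is finite.

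\textbf{Upper semi-continuity.} Fix $x_0\in U$ and $\eta>0$. The open set
$$O_\eta=\{(x,p)\in T^*U\mid H(x,p)<\mathbb{H}(u)(x_0)+\eta\}$$
contains $\{x_0\}\times\partial^cu(x_0)$ by the definition of $\mathbb{H}(u)(x_0)$. By the upper semi-continuity of the set-valued map $x\mapsto(x,\partial^cu(x))$ recalled in Section~\ref{ClarkeDer}, there is a neighborhood $V$ of $x_0$ in $U$ such that $\{x\}\times\partial^cu(x)\subset O_\eta$ for every $x\in V$. Taking the supremum over $p\in\partial^cu(x)$ gives $\mathbb{H}(u)(x)\leq\mathbb{H}(u)(x_0)+\eta$ on $V$, proving upper semi-continuity.

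\textbf{Insertion of a continuous function.} Given a lower semi-continuous $\theta:U\to\R$ with $\mathbb{H}(u)<\theta$ everywhere, I want a continuous $\varphi$ with $\mathbb{H}(u)<\varphi<\theta$. This is the classical Hahn--Katětov insertion theorem: on a perfectly normal (in particular, metrizable) space, between an upper semi-continuous function and a strictly larger lower semi-continuous function one can insert a continuous function. Since $U$ is an open subset of the manifold $N$, which is metrizable, this applies directly. I expect this to be the main technical step of the lemma; in a self-contained way one can reproduce the standard argument by fixing a countable locally finite open cover of $U$ on which $\mathbb{H}(u)$ has small oscillation and on each element of which $\theta$ is bounded below by a value exceeding $\sup\mathbb{H}(u)$ on that element, then patching via a subordinate smooth partition of unity.

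\textbf{Point-wise infimum.} It remains to show that for every $x_0\in U$ and every $\varepsilon>0$ there is a continuous $\varphi:U\to\R$ with $\varphi>\mathbb{H}(u)$ everywhere and $\varphi(x_0)<\mathbb{H}(u)(x_0)+\varepsilon$. For this, define
$$\theta(x)=\begin{cases} \mathbb{H}(u)(x_0)+\varepsilon, & x=x_0,\\ +\infty, & x\in U\setminus\{x_0\}.\end{cases}$$
This $\theta$ is lower semi-continuous (its super-level sets are either $U$ or $U\setminus\{x_0\}$, both open) and satisfies $\theta>\mathbb{H}(u)$ everywhere. The insertion step just proved yields a continuous $\varphi$ with $\mathbb{H}(u)<\varphi<\theta$; evaluating at $x_0$ gives $\varphi(x_0)<\mathbb{H}(u)(x_0)+\varepsilon$, which is what was required.
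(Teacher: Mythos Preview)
Your argument is essentially correct and follows the paper closely for the first three points: finiteness from compactness of $\partial^cu(x)$, upper semi-continuity from that of the set-valued map $x\mapsto\partial^cu(x)$, and the Baire/Hahn--Kat\v{e}tov insertion via a partition of unity. The paper gives exactly this partition-of-unity proof explicitly rather than citing the insertion theorem.

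For the final ``point-wise infimum'' step your route is different from the paper's, and simpler in spirit, but there is a small mismatch you should clean up. The insertion step you just established (and the lemma's own statement) is for a lower semi-continuous $\theta:U\to\R$, i.e.\ real-valued, whereas the $\theta$ you define takes the value $+\infty$ off $x_0$. Your partition-of-unity sketch actually goes through unchanged for such extended-real $\theta$ (on a set where $\theta\equiv+\infty$ any real $t_x>\psi(x)$ works), so the fix is just to say so; alternatively you can keep $\theta$ finite by a small modification. The paper avoids this issue by building a finite lower semi-continuous $\theta$ with $\theta(x_0)=\mathbb{H}(u)(x_0)$ via an exhaustion $K_n$ of $U$ by compacts with $K_0=\{x_0\}$, setting $\theta=\sup_{K_n}\mathbb{H}(u)$ on $K_n\setminus K_{n-1}$, and then inserting between $\mathbb{H}(u)$ and $\theta+1/m$. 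Your trick is shorter once one allows $\theta$ to be extended-real; the paper's construction stays strictly within the real-valued framework it announced.
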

\begin{proof} The fact that $\mathbb{H}(u)(x)<+\infty$ follows from the compactness of
$\partial^c u(x)$. The fact that it is upper semi-continuous follows from the upper 
semi-continuity of $\partial^cu$. The rest depends only on the fact that 
$\mathbb{H}(u)$ is upper-semicontinuous. In fact, if $\psi,\theta:U\to\R$
are respectively upper and lower semi-continuous with $\psi<\theta$,
then we can always find a continuous $\varphi:U\to\R$ with $\psi< \varphi<\theta$.
This is know as the Baire insertion theorem. We recall the simple proof.
For a given $x\in U$, we pick $t_x\in \R$ such that $\psi(x)<t_x<\theta (x)$. 
By semi-continuity,
the set $\{y\mid \psi(y)<t_x<\theta(y)\}$ is open and contains $x$. 
Therefore we can find an open cover $(U_i)_{i\in I}$ of $U$, and a family $(t_i)_{i\in I}$ 
of real numbers such that $ \psi<t_i<\theta$ on $U_i$. If we call $(\varphi_i)_{i\in I}$ 
a partition of unity subordinated to the open cover $(U_i)_{i\in I}$, 
and we define
the continuous $\varphi=\sum_{i\in I}t_i\varphi_i$, it is easy to check 
that $\psi< \varphi<\theta$.

Let us prove the last statement, which is also true for any upper semi-continuous function
$\psi$. We fix $x_0\in U$. We define $K_{-1}=\emptyset$, $K_0=\{x_0\}$,
then we complete it to a sequence $K_n,n\geq -1$ of compact subsets of $U$, such that
$K_n\subset \INT{K}_{n+1}$, and $U=\cup_n K_n$. Since $\psi$ is upper 
semi-continuous $c_n=\sup_{K_n}\psi$ is attained on the compact set $K_n$
 and is therefore finite. Note that $c_0=\psi(x_0)$, and $c_{n+1}\geq c_n$.
If we define $\theta$ by $\theta=c_n$ on $K_n\setminus K_{n-1}$,
  for $n\geq 0$, we have
$\theta \geq \psi$, and $\theta(x_0)=\psi(x_0)$. Moreover, the function
$\theta$ is lower semi-continuous. In fact, if $x\in U$, 
and $n_x=\min\{n\geq0\mid x\in K_n\}$, then $\psi(x)=c_{n_x}$, and $\theta\geq c_{n_x}$,
on the neighborhood $U\setminus K_{n_x-1}$ of $x$.
By the previous part of the lemma, we can find a sequence of continuous functions
$\varphi_m:U\to \R$ such that $\psi<\varphi_m<\theta+1/m$.
Since $\psi(x_0)<\varphi_m(x_0)<\theta(x_0)+1/m=\psi(x_0)+1/m$, the proof is finished.
\end{proof}
Let $u:N\to\R$ be a locally Lipschitz function. 
Call $\rho: N\to \R$ a continuous function such that 
$$\forall (x,p)\in \Gr{\partial^cu}, \lVert p\rVert_x<\rho(x).$$
Such a function $\rho$ exists by the previous lemma. 
The following statement holds:
\begin{cor}\label{CorVariante} Let $u$ and $\rho$ be as above.Then we can find a sequence
$u_n:N\to\R$ of {\rm C}$^\infty$ functions, converging uniformly to 
$u$ on $N$, such that $ \lVert d_xu_n\rVert_x<\rho(x)$, for every $x\in N$, and every 
$n\geq 0$, and
$$\partial^-u(x,v)\leq \liminf_{n\to+\infty}d_xu_n(v) \leq   \limsup_{n\to+\infty}d_xu_n(v)
\leq \partial^+u(x,v),$$
for every $(x,v)\in TN$.
\end{cor}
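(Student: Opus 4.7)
The plan is to iterate Theorem \ref{EncoreUneVariante} against a sequence of continuous Hamiltonians $\hat H_k$ whose negativity sets are prescribed small neighborhoods of $\Gr{\partial^cu}$ inside $\{(x,p):\|p\|_x<\rho(x)\}$, and then to observe that the resulting gradients cluster inside $\partial^cu(x_0)$ at every $x_0\in N$.

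For each $k\geq 1$, I would first construct a continuous function $\hat H_k:T^*N\to\R$ that is strictly negative on $\Gr{\partial^cu}$ and whose strict sublevel set $\{\hat H_k<0\}$ is contained in $\{(x,p)\in T^*N:\|p\|_x<\rho(x)\text{ and }d^*(p,\partial^cu(x))<1/k\}$, where $d^*(p,\partial^cu(x))$ denotes the distance from $p$ to the compact convex set $\partial^cu(x)$ measured in the dual Riemannian norm on $T^*_xN$. To build $\hat H_k$, note that $\Gr{\partial^cu}$ is closed (see \S\ref{ClarkeDer}) and disjoint from the set $\{(x,p):d^*(p,\partial^cu(x))\geq 1/k\}$, which is itself closed: indeed, $(x,p)\mapsto d^*(p,\partial^cu(x))$ is lower semicontinuous, because if $(x_n,p_n)\to(x_0,p_0)$ and $q_n\in\partial^cu(x_n)$ approximately realize the infimum, then the boundedness of $\{q_n\}$ combined with the upper semicontinuity with compact values of $\partial^cu$ yields a cluster point $q_0\in\partial^cu(x_0)$ satisfying $d^*(p_0,\partial^cu(x_0))\leq\|p_0-q_0\|_{x_0}\leq\liminf_n d^*(p_n,\partial^cu(x_n))$. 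Since $T^*N$ is metrizable, Urysohn's lemma then furnishes a continuous $G_k:T^*N\to[-1,1]$ taking value $-1$ on $\Gr{\partial^cu}$ and $+1$ on $\{d^*(\cdot,\partial^cu(\cdot))\geq 1/k\}$; the definition $\hat H_k(x,p):=\max\bigl(G_k(x,p),\,\|p\|_x^2-\rho(x)^2\bigr)$ does the job.

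Next, I apply Theorem \ref{EncoreUneVariante} with $U=N$, Hamiltonian $\hat H_k$, and $\epsilon(x)\equiv 1/k$ to obtain a {\rm C}$^\infty$ function $u_k:N\to\R$ with $\|u-u_k\|_\infty\leq 1/k$ and $\hat H_k(x,d_xu_k)<0$ everywhere; this yields $\|d_xu_k\|_x<\rho(x)$ and $d^*(d_xu_k,\partial^cu(x))<1/k$ for every $x\in N$. Fixing $(x_0,v_0)\in TN$, the sequence $\{d_{x_0}u_k\}_k$ lies in a compact ball of $T^*_{x_0}N$; any subsequential limit $p_\infty$ must lie in the closed set $\partial^cu(x_0)$ by the uniform distance estimate, so $\partial^-u(x_0,v_0)\leq p_\infty(v_0)\leq\partial^+u(x_0,v_0)$ by definition. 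Applying this to every subsequential limit of the scalar sequence $d_{x_0}u_k(v_0)$ gives the required liminf/limsup bounds.

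The main obstacle is the first step: producing a continuous Hamiltonian whose negativity set is a small prescribed neighborhood of $\Gr{\partial^cu}$. This reduces to a Urysohn-type interpolation made possible by the lower semicontinuity of the distance-to-$\partial^cu$ function, itself a consequence of the upper semicontinuity with compact values of $\partial^cu$ recalled in \S\ref{ClarkeDer}. Once $\hat H_k$ is in hand, the remaining arguments are a direct application of Theorem \ref{EncoreUneVariante} together with compactness in each fiber $T^*_{x_0}N$.
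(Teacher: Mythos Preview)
Your proof is correct and close in spirit to the paper's, but the two make a different choice of ``distance to $\partial^cu$''. The paper takes $H_n(x,p)=\max\bigl(\lVert p\rVert_x-\rho(x),\,\delta^*((x,p),\Gr{\partial^cu})-1/n\bigr)$, using the ambient metric $\delta^*$ on $T^*N$; since $\Gr{\partial^cu}$ is closed, this is continuous for free and no Urysohn step is needed. The price is that the resulting estimate $\delta^*((x,d_xu_n),\Gr{\partial^cu})<1/n$ only gives nearby points $(x_n,p_n)\in\Gr{\partial^cu}$ with possibly $x_n\neq x$, so the concluding argument must track base points $x_n\to x$ and appeal to compactness of $\Gr{\partial^cu}$ over compacta. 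Your fiberwise distance $d^*(p,\partial^cu(x))$ is only lower semicontinuous (as you correctly argue), which forces the Urysohn detour; in exchange you obtain $d^*(d_{x_0}u_k,\partial^cu(x_0))<1/k$ in the fixed fiber $T^*_{x_0}N$, so every cluster point of $d_{x_0}u_k$ lies in $\partial^cu(x_0)$ immediately. The trade-off is thus: simpler construction of $H_n$ (paper) versus simpler extraction of the limit (yours).
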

\begin{proof} We apply Theorem \ref{EncoreUneVariante}, with $U=N$, $\epsilon=1/n$, and 
$$H(x,p)=\max(\lVert p\rVert_x-\rho(x), \delta^*((x,p),\Gr{\partial^cu})-1/n),$$
to obtain a {\rm C}$^\infty$ function $u_n:U\to \R$ such that $\lVert u_n-u\rVert_\infty
\leq 1/n$, and satisfying 
$$ \lVert d_xu_n\rVert_x<\rho(x),\text{ and } \delta^*((x,d_xu_n),\Gr{\partial^cu})<\frac1n.$$
It remains to prove the last part of the corollary. Fix $(x,v)\in TN$, we prove the inequality
$$ \limsup_{n\to+\infty}d_xu_n(v)\leq \partial^+u(x,v).$$
We can find a subsequence $n_i$ of the integers such that $d_xu_{n_i}(v)$ converges 
to $\limsup_{n\to+\infty}d_xu_n(v)$, as $i\to \infty$.
Since $\delta^*((x,d_xu_n),\Gr{\partial^cu})<1/n$, we can find $(x_n,p_n)
\in \Gr{\partial^cu}$ such that
$\delta^*((x,d_xu_n),(x_n,p_n))< 1/n$. Since $d(x_n,x)\leq \delta^*((x,d_xu_n),(x_n,p_n))
<1/n$, we have $x_n\to x$. In particular, the set $K=\{x_n\mid n\in \N\}\cup \{x\}$ 
is a compact subset of $N$. 
Therefore, the sequence $(x_n,p_n)$ is contained in the compact subset
$\tilde K=\{(y,p)\in \Gr{\partial^cu}\mid y\in K\}$. By possibly considering a subsequence, we can assume that $(x_{n_i},p_{n_i})$ converges to $(x,p)$. Of course, we have $p\in \partial^cu(x)$, since $\Gr{\partial^cu}$ is closed. By the choice of $(x_n,p_n)$, we conclude that $d_{x_{n_i}}u\to p$. Hence
$$ \limsup_{n\to+\infty}d_xu_n(v)=\lim_{i\to\infty}d_xu_{n_i}(v)=p(v)\leq \partial^+u(x,v).$$
The other inequality is proved similarly.
\end{proof}
We will need to use Fatou's lemma for $\limsup$ instead of $\liminf$. We recall and prove the statement.
\begin{lemma}[Fatou for $\limsup$]\label{Fatoulimsup} Suppose $(X, {\mathcal B}, \mu)$
is a measure space, where $\mathcal B$ is a $\sigma$-algebra on $X$, and $\mu$ is
a measure defined on $\mathcal B$. Let $\varphi,\varphi_n:X\to \R, n\in \N$
be measurable functions such that
$$\lvert\varphi_n(x)\rvert \leq\varphi(x) \text{ for $\mu$-almost every $x\in X$,
and every $n\geq 0$.}$$
If $\varphi$ is integrable, and that for every $n\geq 0$,then all the function
$\varphi_n$ are integrable, and so is $\limsup \varphi_n$. 
Moreover, we have
$$\limsup\int_X\varphi_n\,d\mu\leq \int_X\limsup\varphi_n\,d\mu.$$
 \end{lemma}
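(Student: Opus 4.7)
The plan is to reduce this to the standard (classical) Fatou lemma applied to a nonnegative sequence. First I would record the easy integrability facts: from $|\varphi_n| \leq \varphi$ $\mu$-a.e.\ we get $\varphi \geq 0$ $\mu$-a.e., each $\varphi_n$ is integrable since $\int |\varphi_n|\,d\mu \leq \int \varphi \, d\mu < +\infty$, and since $|\limsup \varphi_n| \leq \varphi$ $\mu$-a.e., the function $\limsup \varphi_n$ is measurable and integrable as well.

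Next I would apply the classical Fatou lemma to the sequence $\psi_n := \varphi - \varphi_n$, which is nonnegative $\mu$-a.e.\ because $\varphi_n \leq |\varphi_n| \leq \varphi$. Fatou gives
\[
\int_X \liminf_n (\varphi - \varphi_n)\, d\mu \leq \liminf_n \int_X (\varphi - \varphi_n)\, d\mu.
\]
Using $\liminf_n(\varphi - \varphi_n) = \varphi - \limsup_n \varphi_n$ on the left, $\liminf_n \int(\varphi - \varphi_n)\,d\mu = \int \varphi\, d\mu - \limsup_n \int \varphi_n\, d\mu$ on the right, and subtracting the finite quantity $\int \varphi \, d\mu$ from both sides yields the desired inequality
\[
\limsup_n \int_X \varphi_n\, d\mu \leq \int_X \limsup_n \varphi_n\, d\mu.
\]

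There is no real obstacle here: the only thing to be careful about is that we may freely subtract $\int \varphi\, d\mu$ because $\varphi$ is integrable (so the quantity is finite), and that the standard identities between $\liminf$/$\limsup$ of $a - b_n$ and $\limsup b_n$ hold pointwise. The a.e.\ nature of the inequality $|\varphi_n| \leq \varphi$ is harmless because we can modify each $\varphi_n$ on a $\mu$-null set without changing any of the integrals or the pointwise $\limsup$ $\mu$-a.e.
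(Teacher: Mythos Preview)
Your argument is correct. The paper, however, takes a different route: instead of applying the classical Fatou lemma to the nonnegative sequence $\varphi-\varphi_n$, it invokes Lebesgue's dominated convergence theorem on the auxiliary sequence $\psi_n=\sup_{m\geq n}\varphi_m$. Since $\lvert\psi_n\rvert\leq\varphi$ a.e.\ and $\psi_n\to\limsup\varphi_n$ pointwise, DCT gives $\int\limsup\varphi_n\,d\mu=\lim\int\psi_n\,d\mu$, and then $\psi_n\geq\varphi_n$ yields $\lim\int\psi_n\,d\mu\geq\limsup\int\varphi_n\,d\mu$. Your reduction to classical Fatou is arguably the more elementary of the two, since Fatou is typically proved prior to DCT; the paper's approach has the minor advantage of making the intermediate object $\psi_n$ explicit, which is sometimes useful when one wants to track when equality holds. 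Both are standard and equally valid.
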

 \begin{proof} We will prove it as a consequence of Lebesgue's dominated convergence theorem, but in fact, the usual proof of Lebesgue's dominated convergence theorem contains a proof of this fact. Of course the domination condition implies that each $\varphi_n$ is integrable. Note that domination condition can be written as
 $$-\varphi\leq \varphi_n \leq \varphi,  \text{$\mu$-a.e.}$$
Therefore, if we define $\psi_n=\sup_{m\geq n}\varphi_m$, we will also have the domination condition $\lvert \psi_n\rvert\leq \varphi, \text{ $\mu$-a.e.}$, for every $n\geq 0$. It follows that each $\psi_n$ is integrable. Moreover, since $\lim \psi_n=\limsup \varphi_n$, we obtain from Lebesgue's dominated convergence theorem that $\int_X\limsup \varphi_n\,d\mu=\lim \int_X\psi_n\, d\mu$. It remains to show that $\lim \int_X\psi_n\, d\mu\geq \limsup\int_X\varphi_n\,d\mu$, which follows easily from the inequality $\psi_n\geq \varphi_n$.
\end{proof}
\begin{cor}\label{FORMOYEN} Let $u:N\to\R$ be a locally Lipschitz function. For every absolutely continuous path $\gamma: [a,b]\to N$, we have 
$$\int_a^b\partial^-u(\gamma(t),\dot\gamma(t))\, dt\leq u(\gamma(b))-u(\gamma(a))\leq \int_a^b\partial^+u(\gamma(t),\dot\gamma(t))\, dt.$$
\end{cor}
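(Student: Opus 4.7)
The plan is to approximate $u$ by smooth functions, use the fundamental theorem of calculus for each approximation, and then pass to the limit using the $\limsup$-version of Fatou's lemma (Lemma \ref{Fatoulimsup}).

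First I would fix a continuous function $\rho:N\to\R$ as in Corollary \ref{CorVariante} so that $\lVert p\rVert_x<\rho(x)$ for every $(x,p)\in\Gr{\partial^cu}$, and apply that corollary to obtain a sequence $u_n\in\D{C}^\infty(N,\R)$ converging uniformly to $u$ on $N$ with $\lVert d_xu_n\rVert_x<\rho(x)$ and
$$\partial^-u(x,v)\leq \liminf_{n\to\infty} d_xu_n(v)\leq \limsup_{n\to\infty} d_xu_n(v)\leq \partial^+u(x,v)$$
for every $(x,v)\in TN$. Since each $u_n$ is smooth and $\gamma:[a,b]\to N$ is absolutely continuous, the fundamental theorem of calculus gives
$$u_n(\gamma(b))-u_n(\gamma(a))=\int_a^b d_{\gamma(t)}u_n(\dot\gamma(t))\,dt.$$
By uniform convergence the left side tends to $u(\gamma(b))-u(\gamma(a))$.

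For the upper bound, I would find an integrable dominating function for $t\mapsto d_{\gamma(t)}u_n(\dot\gamma(t))$. The image $\gamma([a,b])$ is compact, so $C:=\sup_{t\in[a,b]}\rho(\gamma(t))<+\infty$. Then
$$\lvert d_{\gamma(t)}u_n(\dot\gamma(t))\rvert\leq \lVert d_{\gamma(t)}u_n\rVert_{\gamma(t)}\,\lVert\dot\gamma(t)\rVert_{\gamma(t)}\leq C\lVert\dot\gamma(t)\rVert_{\gamma(t)},$$
and $t\mapsto C\lVert\dot\gamma(t)\rVert_{\gamma(t)}$ is integrable on $[a,b]$ because $\gamma$ is absolutely continuous. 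Applying Lemma \ref{Fatoulimsup} and the pointwise inequality from Corollary \ref{CorVariante} then yields
$$u(\gamma(b))-u(\gamma(a))=\limsup_{n\to\infty}\int_a^b d_{\gamma(t)}u_n(\dot\gamma(t))\,dt\leq \int_a^b\limsup_{n\to\infty}d_{\gamma(t)}u_n(\dot\gamma(t))\,dt\leq \int_a^b\partial^+u(\gamma(t),\dot\gamma(t))\,dt.$$

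The lower bound follows either by repeating the same argument with the ordinary Fatou lemma applied to $-d_{\gamma(t)}u_n(\dot\gamma(t))$ (using $\liminf$), or more elegantly by applying the upper bound to $-u$ and using the identities $\partial^+(-u)(x,v)=-\partial^-u(x,v)$. The main potential obstacle is ensuring that the passage to the limit is legitimate at \emph{every} $t$, not just at differentiability points of $\gamma$; this is resolved precisely by the domination bound above, which only uses the almost-everywhere defined $\dot\gamma$ together with its integrability. Measurability of the integrands $t\mapsto \partial^\pm u(\gamma(t),\dot\gamma(t))$ is automatic from the upper/lower semicontinuity of $\partial^\pm u$ noted in \S\ref{ClarkeDer}.
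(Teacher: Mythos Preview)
Your proof is correct and follows essentially the same approach as the paper's own argument: approximate via Corollary \ref{CorVariante}, apply the fundamental theorem of calculus to each $u_n$, dominate by $C\lVert\dot\gamma(t)\rVert_{\gamma(t)}$ with $C=\sup_{t\in[a,b]}\rho(\gamma(t))$, and invoke the $\limsup$-Fatou lemma, then deduce the lower bound by replacing $u$ with $-u$.
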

\begin{proof} We will prove the right hand side inequality. The left hand side inequality follows from the left hand-side applied to $-u$. Consider the sequence of functions $u_n$ obtained in the previous Corollary \ref{CorVariante}. Since the $u_n$ are smooth, and $\gamma$ is absolutely continuous, we have 
$$ u_n(\gamma(b))-u_n(\gamma(a))= \int_a^bd_{\gamma(t)}u_n(\dot\gamma(t))\, dt.$$
Since $u_n$ converges uniformly to $u$, and $\limsup_{n\to+\infty}d_{\gamma(t)}u_n(\dot\gamma(t))\leq \partial^+u(\gamma(t),\dot\gamma(t))$ on $[a,b]$, it suffices to show that
$$\int_a^b\limsup_{n\to+\infty}d_{\gamma(t)}u_n(\dot\gamma(t))\, dt\geq \limsup_{n\to+\infty}\int_a^bd_{\gamma(t)}u_n(\dot\gamma(t))\, dt.$$
But this follows from the $\limsup$ version of Fatou's lemma \ref{Fatoulimsup} applied to the sequence of functions  $\varphi_n(t)=d_{\gamma(t)}u_n(\dot\gamma(t))$,
once we find a common dominated function. Since $\gamma$ is an absolutely
continuous curve, the function  $\psi(t)=\lVert \dot\gamma(t)\rVert_{\gamma(t)}$
is integrable over $[a,b]$. Using $ \lVert d_xu_n\rVert_x<\rho(x)$, we obtain
$\lvert\varphi_n(t)\rvert\leq C\psi(t)$, where $C=\sup_{t\in  [a,b]}\rho(\gamma(t))<+\infty$.
This exactly the type of domination we are looking for.
\end{proof}

\begin{cor}\label{corVar2} Let $u:N\to\R$ be a locally Lipschitz function,
and $u_n:N\to \R$ be a sequence of functions given by Corollary \ref{CorVariante}.
 Let $\tilde \mu$ be a Borel measure on $TN$ for which the norm function
$TN\to \R, (x,v)\mapsto \lVert v\rVert_x$ is $\tilde \mu$-integrable on the set 
$T_KN=\{(x,v)\in TN\mid x\in K\}$, for every  compact subset $K$ of $N$.
Then the functions $\partial^+u:TN\to \R$ and $du_n:TN\to \R, (x,v)\mapsto d_xu_n(v),n\in \N$
are all $\tilde \mu$-integrable on $T_KN$, with 
$$ \limsup\int_{T_KN}d_xu_n(v)\, d\tilde\mu(x,v)\leq 
\int_{T_KN}\partial^+u(x,v)\, d\tilde\mu(x,v).$$
\end{cor}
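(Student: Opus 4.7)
The plan is to reduce the statement to a direct application of the $\limsup$ Fatou lemma (Lemma \ref{Fatoulimsup}) by producing a single $\tilde\mu$-integrable function on $T_KN$ that dominates every $d_xu_n(v)$ as well as $\partial^+u(x,v)$.

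First I would use the key pointwise estimate built into Corollary \ref{CorVariante}: each $u_n$ satisfies $\lVert d_xu_n\rVert_x<\rho(x)$ everywhere on $N$. Since $\rho:N\to\R$ is continuous and $K$ is compact, we may set $C_K:=\sup_{x\in K}\rho(x)<+\infty$. Then for every $(x,v)\in T_KN$ and every $n$,
\[
\lvert d_xu_n(v)\rvert \leq \lVert d_xu_n\rVert_x \lVert v\rVert_x \leq C_K\lVert v\rVert_x.
\]
The same bound holds for $\partial^+u(x,v)$: by definition $\partial^+u(x,v)=\sup\{p(v)\mid p\in\partial^c u(x)\}$, and the defining property of $\rho$ gives $\lVert p\rVert_x<\rho(x)\leq C_K$ for every $p\in\partial^cu(x)$ and $x\in K$, so again $\lvert\partial^+u(x,v)\rvert\leq C_K\lVert v\rVert_x$ on $T_KN$. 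By the integrability hypothesis on $(x,v)\mapsto\lVert v\rVert_x$ over $T_KN$, the dominating function $(x,v)\mapsto C_K\lVert v\rVert_x$ is $\tilde\mu$-integrable on $T_KN$, which establishes the asserted $\tilde\mu$-integrability of each $d_xu_n$ and of $\partial^+u$ on $T_KN$.

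Second, I would invoke the final inequality in Corollary \ref{CorVariante}:
\[
\limsup_{n\to+\infty}d_xu_n(v)\leq \partial^+u(x,v),\qquad\text{for every }(x,v)\in TN.
\]
Applying Lemma \ref{Fatoulimsup} to the sequence of measurable functions $\varphi_n(x,v)=d_xu_n(v)$ on $T_KN$, with the common dominating function $\varphi(x,v)=C_K\lVert v\rVert_x$, yields
\[
\limsup_{n\to+\infty}\int_{T_KN}d_xu_n(v)\,d\tilde\mu(x,v)\leq \int_{T_KN}\limsup_{n\to+\infty}d_xu_n(v)\,d\tilde\mu(x,v)\leq \int_{T_KN}\partial^+u(x,v)\,d\tilde\mu(x,v),
\]
which is the desired conclusion.

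There is no genuine obstacle; the only subtlety is verifying the domination hypothesis of Lemma \ref{Fatoulimsup}, and this is exactly what the uniform bound $\lVert d_xu_n\rVert_x<\rho(x)$ from Corollary \ref{CorVariante}---combined with compactness of $K$ and the standing integrability of $\lVert v\rVert_x$ on $T_KN$---provides for free.
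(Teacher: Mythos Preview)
Your proof is correct and follows essentially the same approach as the paper: bound both $d_xu_n(v)$ and $\partial^+u(x,v)$ by $C_K\lVert v\rVert_x$ with $C_K=\max_{x\in K}\rho(x)$, then apply the $\limsup$ Fatou lemma together with the pointwise inequality from Corollary~\ref{CorVariante}. Your write-up is in fact more detailed than the paper's, which simply records the two bounds and defers to Lemma~\ref{Fatoulimsup} and Corollary~\ref{CorVariante}.
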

\begin{proof} By the choice of  $\rho$, we have
\begin{align*}
\lvert d_xu_n(v)\rvert &\leq C\lVert v\rVert_x,\\
\lvert\partial^+u(x,v)\rvert &\leq C\lVert v\rVert_x,
\end{align*}
where $C=\max_{x\in K}\rho(x)<+\infty$. Since $(x,v)\mapsto C\lVert v\rVert_x$ is integrable on $T_KN$, the rest of the proof is a consequence of  the $\limsup$ version of Fatou's lemma \ref{Fatoulimsup} and Corollary \ref{CorVariante}, like in the previous proof.
 \end{proof}
\section{Aubry-Mather and Weak KAM theories for non-smooth Lagrangians}
In this appendix, we present the main results of weak
KAM Theory we use. This material is well known in the case of a Tonelli Hamiltonian, 
see \cite{BernardSurvey, ConItu, fathi}. The  lack of Hamiltonian and Lagrangian flows  requires some  different arguments, see \cite{DS, DZ10, FathiSurvey}. Although given specifically for the torus the arguments of \cite{DS, DZ10}  can be easily rephrased in our setting.
\subsection{The Lagrangian and its action along curves}\label{sez Lagrangian}
With any given continuous Hamiltonian $H$ satisfying (H1)--(H2$'$) we can associate  
a function $L:TM\to\R$ through the {\em Fenchel transform}, i.e.\ for $(x,v)\in TM$
\begin{equation}\label{def L}
L(x,v):=\sup_{p\in T_x^*M} p(v) -
H(x,p).
\end{equation}
The function $L$ is called the {\em Lagrangian} associated with the
Hamiltonian $H$. It is continuous on $TM$ and satisfies properties 
analogous to (H1) and (H2$'$), see Appendix A.2 in \cite{CaSi00}. In particular, $L(x,\cdot)$ is superlinear in $T_x M$ for every fixed $x\in M$. 

The {\em subdifferential}, in the sense of convex analysis, of the convex function $L(x,\cdot)$ at $v$ is the set defined by 
\[
 {\partial_v L}(x,v):=\left\{p\in T_x^*M\,:\,L(x,v')\geq L(x,v)+p(v'-v),\text{ for every $v'\in T_x M$}\,\right\}.
\]
We recall that the set--valued map $(x,v)\mapsto (x, {\partial L}/{\partial v}(x,v)) $ 
from $TM$ to $T^*M$ is upper semicontinuous with respect to set inclusion. 
Analogous definitions and results hold for $H$. We record for later use the following well known facts, see \cite[Theorem 23.5]{Ro70}.

\begin{prop}\label{prop known}
Let $H$ and $L$ be as above. The following inequality, called  {\em Fenchel inequality}, holds
\begin{equation}\label{Fenchel inequality}
L(x,v)+H(x,p)\geq p(v),\text{ for every $(x,v)\in TM$ and $(x,p)\in T^*M$},
\end{equation}
and
\[
H(x,p)=\sup_{v\in T_x M}\left\{p(v)-L(x,v)\,\right\}\qquad\hbox{for every $(x,p)\in T^*M$}.
\]
Furthermore, the following conditions on $v\in T_x M$ and $p\in T_x^* M$ are equivalent to each other:\smallskip
\begin{itemize}
    \item[\rm (i)] \quad $L(x,v)+H(x,p)= p(v)$;\medskip
    \item[\rm (ii)] \quad $\displaystyle{p \in {\partial_v L}(x,v)}$;\medskip
    \item[\rm (iii)]  \quad $\displaystyle{v\in {\partial_p H}(x,p)}$.\medskip
\end{itemize}
\end{prop}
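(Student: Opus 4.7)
The proof splits into three parts. First, the Fenchel inequality is immediate from the definition \eqref{def L} of $L$: for any $(x,v)\in TM$ and $(x,p)\in T^*M$, one has $L(x,v)=\sup_{q\in T_x^*M}[q(v)-H(x,q)]\geq p(v)-H(x,p)$, which rearranges to \eqref{Fenchel inequality}.

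Next, to establish $H(x,p)=\sup_{v\in T_xM}[p(v)-L(x,v)]$, denote the right-hand side by $\tilde H(x,p)$. The Fenchel inequality gives $p(v)-L(x,v)\leq H(x,p)$ for every $v$, so $\tilde H\leq H$. For the opposite inequality, fix $(x,p_0)\in T^*M$. Since $H(x,\cdot)$ is a finite-valued convex continuous function on the finite-dimensional space $T_x^*M$, its convex subdifferential at $p_0$ is nonempty (this is the standard consequence of the geometric Hahn--Banach theorem for proper convex functions on $\R^n$): there exists $v_0\in T_xM$ such that
\[
H(x,p)\geq H(x,p_0)+(p-p_0)(v_0)\qquad\hbox{for every $p\in T_x^*M$.}
\]
Rearranging, $p(v_0)-H(x,p)\leq p_0(v_0)-H(x,p_0)$ for every $p$, so taking the supremum on the left gives $L(x,v_0)\leq p_0(v_0)-H(x,p_0)$. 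Combined with the Fenchel inequality already proved, this forces $L(x,v_0)+H(x,p_0)=p_0(v_0)$, and hence $\tilde H(x,p_0)\geq p_0(v_0)-L(x,v_0)=H(x,p_0)$, as required.

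Finally, the equivalence of (i)--(iii) follows formally from the two dual variational formulas for $L$ and $H$ together with the definitions of the subdifferentials. For (i)$\Leftrightarrow$(ii), $p\in \partial_v L(x,v)$ is by definition equivalent to $p(v')-L(x,v')\leq p(v)-L(x,v)$ for every $v'\in T_xM$; taking the supremum in $v'$ and invoking the formula $H(x,p)=\sup_{v'}[p(v')-L(x,v')]$ just established, this is equivalent to $H(x,p)\leq p(v)-L(x,v)$, which by the Fenchel inequality is in fact the equality (i). The argument for (i)$\Leftrightarrow$(iii) is strictly symmetric, using the defining formula $L(x,v)=\sup_{p'}[p'(v)-H(x,p')]$ and the definition of $\partial_p H(x,p)$ instead. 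The only nontrivial step is the existence of a subgradient of $H(x,\cdot)$ at $p_0$ used in the second paragraph; this is where the convexity and finite-dimensionality (and continuity, to ensure $H$ is proper) are genuinely needed.
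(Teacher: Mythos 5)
Your proof is correct. Note that the paper itself gives no argument for this proposition: it simply records it as a known fact with a citation to Rockafellar (Theorem 23.5 of \cite{Ro70}), since in the standing setting (H1)--(H2$'$) the function $H(x,\cdot)$ is a finite, continuous, convex, superlinear function on the finite-dimensional space $T_x^*M$ and the statement is exactly classical conjugate duality fibre by fibre. What you have written is essentially the standard proof behind that citation: the Fenchel inequality is immediate from the definition \eqref{def L}; the biconjugation formula $H(x,p)=\sup_v\{p(v)-L(x,v)\}$ follows from the nonemptiness of the convex subdifferential of the finite convex function $H(x,\cdot)$ at every point of $T_x^*M$ (the one genuinely analytic ingredient, which you correctly isolate); and the equivalences (i)--(iii) are formal consequences of the two conjugacy formulas and the definitions of $\partial_vL(x,v)$ and $\partial_pH(x,p)$. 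One cosmetic remark: a convex function that is finite on all of a finite-dimensional space is automatically continuous and proper, so your parenthetical appeal to continuity in the last sentence is not an extra hypothesis but comes for free; in the paper's setting the superlinearity (H2$'$) moreover guarantees that $L$ is finite everywhere, so no extended-real-valued subtleties arise in the equivalences.
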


Let $J$ be a closed interval in $\R$. A curve $\gamma:J\to M$ is said to be 
{\em absolutely continuous} if, for every $\eps>0$, there exists $\delta>0$ such that, for each family 
$\{(a_n, b_n)\,:\, n\in\N\,\}$ of pairwise disjoint intervals included in $J$, the following property holds:
\[
\hbox{if}\qquad\sum_{n=1}^{+\infty} |b_n -a_n| < \delta\qquad\hbox{then}
\qquad
\sum_{n=1}^{+\infty} d\left(\gamma(a_n),\gamma(b_n)\right) < \eps.
\]
The family of absolutely continuous curves from $J$ to $M$ will be henceforth denoted by $\D{AC}\left(J;M\right)$. We will say that a sequence of curves $\gamma_n$ in $\D{AC}\left(J;M\right)$ uniformly converge to $\gamma:J\to M$ if\  $\sup_{t\in J} d(\gamma_n(t),\gamma(t))\to 0$ as $n\to +\infty$. The limit curve $\gamma$ is continuous, but not absolutely continuous in general. 
  
An absolutely continuous curve $\gamma:J\to M$ is differentiable at almost every point of $J$. We will denote by $\|\dot\gamma\|_\infty$ the essential supremum norm of the velocity field of the curve, i.e. 
\[
\|\dot\gamma\|_\infty:=\esssup_{s\in J} \|\dot\gamma(s)\|_{\gamma(s)}
\]
If there exists $\alpha>0$ such that $d(\gamma(s),\gamma(t))\leq\alpha\,|s-t|$ for every $s,t\in J$, then the curve $\gamma$ will be termed {\em Lipschitz}, or {\em $\alpha$--Lipschitz} when we want to specify the Lipschitz constant. 
This is equivalent to requiring $\|\dot\gamma\|_\infty\leq \alpha$ for $M$ is a length space.    
When $M$ is embedded in $\R^k$, it is well known that $\gamma$ is absolutely continuous if and only if its distributional derivative $\dot\gamma$ belongs to $L^1(J;\R^k)$.  

Let us now assume that $J$ is bounded, i.e. of the form $[a,b]$ for some $a<b$ in $\R$. For every $\lambda\geq 0$, we define a functional $\Length^\lambda$ on $\D{AC}([a,b];M)$ by setting
\[
 \Length^\lambda(\gamma):=\int_a^b \e^{\lambda s} L(\gamma(s),\dot\gamma(s))\,d s,\qquad \gamma\in \D{AC}([a,b];M).
\]
From the fact that $L$ is bounded from below, it is easily seen that this integral is always well defined, with values in $\R\cup\{+\infty\}$. The following Tonelli--type theorem holds:

\begin{teorema}\label{teo tonelli}
Let $(\gamma_n)_n$ be a sequence in $\D{AC}([a,b];M)$ such that 
\[
 \sup_{n\in\N}\ \Length^\lambda(\gamma_n)<+\infty.
\]
If a continuous curve $\xi:[a,b]\to M$ is the uniform limit of some subsequence  $\left(\gamma_{n_k}\right)_k$, then 
$\xi\in\D{AC}([a,b];M)$ and 
\[
 \Length^\lambda(\xi)\leq \liminf_{k\to +\infty}\Length^\lambda(\gamma_{n_k}).
\]
Moreover, there exists a subsequence $\left(\gamma_{n_k}\right)_k$ uniformly converging to a curve $\gamma\in\D{AC}([a,b];M)$.
\end{teorema}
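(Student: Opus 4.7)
The theorem has two ingredients: (a) existence of a uniformly convergent subsequence, and (b) lower semicontinuity of $\Length^\lambda$ along uniformly convergent sequences with bounded energy. Both are classical Tonelli-type facts, and my plan is to extract them from superlinearity of $L$ (which will yield equi-integrability of the velocities) together with convexity of $L(x,\cdot)$ (which will yield the lower semicontinuity).

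\textbf{Step 1: equi-integrability of velocities.} Since $H$ satisfies (H2$'$), the Fenchel dual $L$ is superlinear in $v$: for every $K>0$ there exists $C_K$ with $L(x,v)\geq K\|v\|_x-C_K$ on $TM$, and $L$ is bounded below by some $-C_0$. Set $m_\lambda=\min_{[a,b]}e^{\lambda s}>0$ and $M_\lambda=\max_{[a,b]}e^{\lambda s}$. For any measurable $E\subset[a,b]$, I would estimate
\[
m_\lambda K\int_E\|\dot\gamma_n(s)\|_{\gamma_n(s)}\,ds
\leq \int_E e^{\lambda s}\bigl(L(\gamma_n,\dot\gamma_n)+C_K\bigr)\,ds
\leq \sup_n\Length^\lambda(\gamma_n)+C_0M_\lambda(b-a)+C_KM_\lambda|E|.
\]
Taking $K$ large and then $|E|$ small shows that $(\|\dot\gamma_n\|)_n$ is equi-integrable in $L^1([a,b])$.

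\textbf{Step 2: compactness and closure of $\D{AC}$.} Equi-integrability of $\|\dot\gamma_n\|$ combined with $d(\gamma_n(s),\gamma_n(t))\leq\int_s^t\|\dot\gamma_n\|\,d\tau$ gives equi-continuity of the family $(\gamma_n)$. Since $M$ is compact, Ascoli--Arzel\`a delivers a subsequence converging uniformly to a continuous $\gamma:[a,b]\to M$; the absolute-continuity estimate inherited from Step 1 (replacing $\gamma_n$ by $\gamma$ via passage to the limit in the definition of absolute continuity applied to each finite disjoint family) shows $\gamma\in\D{AC}([a,b];M)$. This proves the second assertion, and also that any uniform limit $\xi$ as in the first assertion automatically lies in $\D{AC}([a,b];M)$.

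\textbf{Step 3: lower semicontinuity (the main obstacle).} Suppose $\gamma_{n_k}\to\xi$ uniformly with $\sup_k\Length^\lambda(\gamma_{n_k})<\infty$; after extraction I may assume $\Length^\lambda(\gamma_{n_k})\to\ell=\liminf_k\Length^\lambda(\gamma_{n_k})$. I would embed $M$ smoothly into some $\R^N$ (Whitney), so the $\gamma_{n_k}$ and $\xi$ are AC curves in $\R^N$ with derivatives tangent to $M$, and the Riemannian norm on $M$ is bi-Lipschitz to the restriction of the Euclidean norm on the compact $M$. By Step 1, $(\dot\gamma_{n_k})_k$ is equi-integrable in $L^1([a,b];\R^N)$, hence by Dunford--Pettis, a further subsequence satisfies $\dot\gamma_{n_k}\weakcv w$ weakly in $L^1$; passing distributional derivatives to the uniform limit identifies $w=\dot\xi$ a.e. Applying Ioffe's lower semicontinuity theorem to the normal integrand $(s,x,v)\mapsto e^{\lambda s}L(x,v)$, which is continuous in $(s,x,v)$ and convex in $v$, with strong convergence $\gamma_{n_k}\to\xi$ and weak $L^1$ convergence $\dot\gamma_{n_k}\weakcv\dot\xi$, yields
\[
\Length^\lambda(\xi)\leq\liminf_k\Length^\lambda(\gamma_{n_k}),
\]
as required. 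The hard part is precisely this step: because $L$ is only assumed continuous, there is no Lagrangian flow and no Tonelli $C^2$ structure to exploit; one must invoke a general convex-integrand lower-semicontinuity result (Ioffe's theorem, or equivalently Mazur's lemma plus pointwise extraction) to convert convexity in $v$ plus weak $L^1$ convergence of velocities into the required inequality.
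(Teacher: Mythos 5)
Your proposal is correct and follows essentially the same route the paper indicates: the paper itself only sketches the proof by reducing to the case $M\subset\R^k$ (via charts or an embedding) and citing Dunford--Pettis together with standard lower-semicontinuity results for convex integrands, which is exactly your Steps 1--3 (superlinearity $\Rightarrow$ equi-integrability, Ascoli--Arzel\`a, weak $L^1$ compactness of velocities, Ioffe/Mazur-type semicontinuity). The only cosmetic difference is that you use a global Whitney embedding where the paper suggests localizing in charts, which changes nothing substantive.
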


When $M$ is contained in $\R^k$, the above theorem easily follows by making use of the Dunford--Pettis theorem, see   \cite[Theorems 2.11 and 2.12]{BGH}, and of standard semicontinuity results in the Calculus of Variations, see \cite[Theorem 3.6]{BGH}. 
To get the result in full generality, it suffices to show that we can always reduce to this case by localizing the argument and by reasoning in local charts, see for instance \cite{fathi}.

\subsection{Weak KAM Theory}\label{sez weak KAM theory}
For every $t>0$, we define a function $h_t:M\times M\to \R$ by setting 
\begin{equation*}\label{def h_t}
h_t(x,y)=\inf\left\{\int_{-t}^0 L(\gamma,\dot\gamma)+c(H)\,d s\ :\
\gamma\in \D{AC}([-t,0];M),\ \gamma(-t)=x,\,\gamma(0)=y \right\}.
\end{equation*}
The quantity $h_t(x,y)$ is called the {\em minimal action} to go from $x$ to $y$ in time $t$.\smallskip 

The following characterization holds, see \cite{fathi}:

\begin{prop}\label{prop h_t characterization}
Let $w\in\CC(M,\R)$. Then $w$ is a critical subsolution if and only if
\[
 w(x)-w(y)\leq h_t(y,x)\qquad\hbox{for every $x,y\in M$ and $t>0$.}\medskip
\]
\end{prop}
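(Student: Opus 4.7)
The statement is the exact analogue of Proposition \ref{prop subsol characterization} for the critical equation (which replaces the discount term $\lambda u$ by the constant $c(H)$), so I expect to be able to mimic that proof closely.

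For the direct implication, assume $w$ is a critical subsolution, i.e. a continuous viscosity subsolution of $H(x,d_xu)=c(H)$. By Proposition \ref{prop a.e. subsolution} (applied with $\lambda=0$ to $H-c(H)$, which has critical value $0$), $w$ is Lipschitz. The plan is to invoke Theorem \ref{EncoreUneVariante}, just as in the proof of Proposition \ref{prop subsol characterization}, to produce a sequence of smooth functions $w_n:M\to\R$ with $\|w_n-w\|_\infty\leq 1/n$ and
\[
H(x,d_xw_n)\leq c(H)+\frac{1}{n}\qquad\text{for every }x\in M.
\]
Given any $\gamma\in\D{AC}([-t,0];M)$ with $\gamma(-t)=y$ and $\gamma(0)=x$, writing $w_n(x)-w_n(y)=\int_{-t}^0 \frac{d}{ds}w_n(\gamma(s))\,ds$ and applying the Fenchel inequality \eqref{Fenchel inequality} with $p=d_{\gamma(s)}w_n$ and $v=\dot\gamma(s)$ gives
\[
w_n(x)-w_n(y)\leq \int_{-t}^0\!\big(L(\gamma,\dot\gamma)+H(\gamma,d_{\gamma}w_n)\big)\,ds\leq \int_{-t}^0\!\big(L(\gamma,\dot\gamma)+c(H)\big)\,ds+\frac{t}{n}.
\]
Letting $n\to+\infty$ and taking the infimum over admissible $\gamma$ yields $w(x)-w(y)\leq h_t(y,x)$.

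For the converse, assume the inequality holds for every $t>0$ and every $x,y\in M$. First I would prove Lipschitz regularity: taking as competitor the unit-speed geodesic from $y$ to $x$ on the interval $[-d(x,y),0]$ gives
\[
w(x)-w(y)\leq (C_1+c(H))\,d(x,y),
\]
with $C_1:=\max\{L(z,v):z\in M,\|v\|_z\leq 1\}$, exactly as in part (iii) of Proposition \ref{properties discounted value function}. By Rademacher, $w$ is differentiable almost everywhere. Fix a point $x$ of differentiability and a vector $v\in T_xM$, and pick a {\rm C}$^1$ curve $\gamma:[-1,0]\to M$ with $\gamma(0)=x$, $\dot\gamma(0)=v$. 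Applying the hypothesis with $y=\gamma(-t)$ for $t\in(0,1)$ gives
\[
\frac{w(\gamma(0))-w(\gamma(-t))}{t}\leq \frac{1}{t}\int_{-t}^0\!\big(L(\gamma(s),\dot\gamma(s))+c(H)\big)\,ds,
\]
and letting $t\to 0^+$ yields $\langle d_xw,v\rangle-L(x,v)\leq c(H)$. Taking the supremum over $v\in T_xM$ and using the Fenchel formula $H(x,d_xw)=\sup_v\{\langle d_xw,v\rangle-L(x,v)\}$ from Proposition \ref{prop known} gives $H(x,d_xw)\leq c(H)$ at every differentiability point. By Proposition \ref{prop subsol equivalent def}, $w$ is a critical subsolution.

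The main technical input is really the smoothing Theorem \ref{EncoreUneVariante}, which is what replaces the classical Lasry--Lions regularization and allows the Fenchel integration argument to work for a non-smooth $H$; everything else is routine, with the geodesic/Lipschitz step and the differentiability step being entirely parallel to the proofs already carried out in \S\ref{sez discounted value function}.
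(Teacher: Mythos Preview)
The paper does not actually prove Proposition \ref{prop h_t characterization}; it simply cites \cite{fathi}. Your argument is correct and is precisely the $\lambda=0$ specialization of the paper's own proof of Proposition \ref{prop subsol characterization}: the same smoothing via Theorem \ref{EncoreUneVariante} (applied to $H-c(H)-1/n$ to manufacture the strict inequality the theorem requires), the same Fenchel-integration step along curves, and the same geodesic/Lipschitz and differentiation argument for the converse. One small point you leave implicit: the Lipschitz constant $C_1+c(H)$ is indeed nonnegative, since $\delta_{(x,0)}$ is a closed measure and hence $L(x,0)\ge -c(H)$ by Proposition \ref{BLABLA} (or, if it were negative, your inequality would force $w$ to be constant, which is still Lipschitz).
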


The {\em Peierls barrier} is the function $h:M\times M\to\R$
defined by
\begin{equation}\label{def h}
h(x,y)=\liminf_{t\to +\infty} h_t(x,y).
\end{equation}
%where $c(H)$ is the critical value of $H$ defined in \eqref{ManeCrit}.
It satisfies the following properties, see for instance \cite{DZ10}:

\begin{prop}\label{prop h} {\rm a)} The Peierls barrier
 $h$ is finite valued and Lipschitz
continuous.\newline\medskip
\noindent 
{\rm b)} If $w$ is a critical subsolution, then 
$$w(x)-w(y)\leq h(y,x),\text{ for every $x,y\in M$}.$$
{\rm c)} For every $x,y,z\in M$ and $t>0$, we have 
\begin{align*}
    h(y,x)&\leq h(y,z)+h_t(z,x)\\
h(y,x)&\leq h_t(y,z)+h(z,x)\\
h(y,x)&\leq h(y,z)+h(z,x).
\end{align*}
{\rm d)} For every fixed $y\in
    M$, the function $h(y,\cdot)$ is a critical solution.\newline\smallskip
\noindent 
{\rm e)} For every fixed $y\in
    M$, the function  $-h(\cdot,y)$ is a critical subsolution.
\end{prop}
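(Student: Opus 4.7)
My plan is to derive the five assertions from two ingredients: the characterization of critical subsolutions given by Proposition \ref{prop h_t characterization}, together with the subadditivity $h_{s+t}(x,z)\le h_s(x,y)+h_t(y,z)$ of the minimal action, which follows immediately by concatenating near-minimizers. Throughout, the reduction $c(H)=0$ is in force.

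I first dispose of (b), (c), the subsolution half of (d), and (e), which are essentially formal. For (b), Proposition \ref{prop h_t characterization} gives $w(x)-w(y)\le h_t(y,x)$ for every $t>0$, and the inequality passes to the $\liminf$ in $t$. For (c), starting from subadditivity and sending $s\to\infty$ in $h_{s+t}(y,x)\le h_s(y,z)+h_t(z,x)$ produces the first inequality; sending only $t\to\infty$ produces the second; sending both produces the third. These inequalities are, via Proposition \ref{prop h_t characterization}, exactly the statements that $h(y,\cdot)$ and $-h(\cdot,y)$ are critical subsolutions, establishing (e) and half of (d).

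The main analytical content is the finiteness in (a). The lower bound is easy: by Proposition \ref{prop h_t characterization} applied to any critical subsolution $w$ (which exists as a uniform limit along a subsequence of the equi-bounded and equi-Lipschitz family $u_\lambda$, using Corollary \ref{cor discounted solution}), one has $h_t(x,y)\ge w(y)-w(x)\ge -2\|w\|_\infty$, uniformly in $t$. For the upper bound, I would fix a critical solution $u$ obtained in the same way and invoke the weak KAM theorem developed in Appendix B to produce, for each $x\in M$, a calibrated curve $\gamma_x\colon(-\infty,0]\to M$ with $\gamma_x(0)=x$ satisfying
\[
u(x)-u(\gamma_x(-t))=\int_{-t}^0 L(\gamma_x,\dot\gamma_x)\,ds\qquad\text{for every }t>0,
\]
so that $h_t(\gamma_x(-t),x)\le 2\|u\|_\infty$. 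Since any two points of $M$ can be joined by a unit-time geodesic at speed at most $\operatorname{diam}(M)$, $h_1(y,z)$ is bounded by some constant $C_1$ depending only on $L$ and $M$. Subadditivity then gives $h_{t+1}(y,x)\le C_1+2\|u\|_\infty$ for every $y,x\in M$ and every $t>0$, hence $h(y,x)<+\infty$. Once the subsolution assertions in (d) and (e) are in hand, Lipschitz continuity of $h$ in each variable is automatic from Proposition \ref{COMPBASIQUE}, with the universal Lipschitz constant $\kappa_0$ given by \eqref{kappabeta}; joint Lipschitz continuity then follows by the triangle inequality.

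The expected main obstacle is the supersolution half of (d). For fixed $y,x\in M$, I would choose $t_n\to\infty$ with $h_{t_n}(y,x)\to h(y,x)$ and take near-minimizers $\gamma_n\colon[-t_n,0]\to M$. The uniform bound on the total action, combined with superlinearity of $L$ and Theorem \ref{teo tonelli}, forces the $\gamma_n$ to be equi-Lipschitz on each fixed interval $[-T,0]$; a diagonal extraction yields a limit $\gamma\colon(-\infty,0]\to M$ with $\gamma(0)=x$. Using Tonelli lower semicontinuity on each $[-\tau,0]$, the inequalities in (c), and the equi-Lipschitz dependence of $h_s(y,\cdot)$ in the second variable (itself a consequence of coercivity, via Proposition \ref{COMPBASIQUE}, applied to the fact that each $h_s(y,\cdot)$ inherits a uniform subsolution-type estimate), I would show that $\gamma$ calibrates $h(y,\cdot)$ in the sense
\[
h(y,\gamma(0))-h(y,\gamma(-\tau))=\int_{-\tau}^0 L(\gamma,\dot\gamma)\,ds\qquad\text{for every }\tau>0.
\]
From this identity, for any subtangent $\phi$ to $h(y,\cdot)$ at $x$, dividing by $\tau$ and letting $\tau\to 0^+$ yields, after applying Fenchel's inequality \eqref{Fenchel inequality}, the desired supersolution condition $H(x,d_x\phi)\ge 0$. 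The genuine difficulty here is that, with only continuous superlinear $L$, there is no Euler--Lagrange flow and no a priori regularity of minimizers, so every limit must be extracted directly from Theorem \ref{teo tonelli} and the identification of the calibration identity at the limit requires careful bookkeeping of the various $\liminf$/$\limsup$ passages.
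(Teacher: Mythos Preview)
The paper does not actually prove Proposition~\ref{prop h}: it is stated with the tag ``see for instance \cite{DZ10}'' and invoked as a known result, so there is no in-paper proof to compare your proposal against.

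That said, a few remarks on your sketch. Your treatment of (b), (c), the subsolution halves of (d)--(e), and the Lipschitz continuity in (a) is correct and standard. The lower bound in (a) is also fine. Where your argument is shakier is the upper bound in (a): you appeal to ``the weak KAM theorem developed in Appendix~B'' to produce, for an arbitrary critical solution $u$ and arbitrary endpoint $x$, a backward calibrated curve. Appendix~B does not contain such a statement; the only curves it constructs are the static curves of Theorem~\ref{teo static curve}, and those exist only for $y\in\mathcal A$ --- a set that is itself defined via $h$, so invoking it here risks circularity. A self-contained route to the upper bound (and the one typically taken in the cited literature) is to observe that the very definition of $c(H)$ yields, for each $\varepsilon>0$, a closed loop $\gamma$ of some period $T$ with action at most $\varepsilon T$; concatenating a bounded-time segment from $y$ to $\gamma$, many copies of $\gamma$, and a bounded-time segment from $\gamma$ to $x$ gives $\limsup_t h_t(y,x)<+\infty$ directly, without any weak KAM input.

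For the supersolution half of (d), your outline via extraction of a backward calibrated curve for $h(y,\cdot)$ is the right idea and is essentially how it is done in \cite{DS,DZ10}. The delicate step you flag --- passing the calibration identity through the Tonelli lower semicontinuity and the triangle inequalities of (c) --- is genuine, and in the purely continuous setting one also needs uniform control on $h_s(y,\cdot)$ as a family of subsolutions (equi-Lipschitz with constant $\kappa_0$) to make the limit identification work. Your proposal identifies this correctly but leaves the bookkeeping implicit.
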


The projected {\em Aubry set} $\A$ is the closed set defined by 
\[
 \A:=\{y\in M\,:\,h(y,y)=0\,\}.
\]
We use the terminology {\em projected} Aubry set to be coherent with the Tonelli case.

The following holds, see \cite{FathiSurvey}:

\begin{teorema}\label{teo strict subsol}
There exists a critical subsolution $w$ which is of class {\rm C}$^1$ and strict in $M\setminus\A$, i.e. satisfies
\[
 H(x,d_x w)<c(H),\text{ for every $x\in M\setminus\A$.}
\]
In particular, the projected Aubry set $\A$ is nonempty. 
\end{teorema}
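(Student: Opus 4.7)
The plan is to proceed in three stages: construct critical subsolutions that are strict (in the Clarke sense) at each single point of $M\setminus\A$, patch these into one Lipschitz critical subsolution strict on all of $M\setminus\A$, and regularize via Theorem \ref{EncoreUneVariante}.

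For the local stage, fix $x_0\in M\setminus\A$, so that $h(x_0,x_0)>0$ by the very definition of $\A$. Start from any critical subsolution $u$ (which exists by the definition of $c(H)$) and apply the backward Lax--Oleinik semigroup
\[
T_t^- u(y) := \inf\Bigl\{u(\gamma(-t)) + \int_{-t}^0 L(\gamma,\dot\gamma)\,ds : \gamma\in\D{AC}([-t,0];M),\ \gamma(0)=y\Bigr\}.
\]
By Proposition \ref{prop h_t characterization} each $T_t^-u$ is a critical subsolution dominating $u$, and, using Theorem \ref{teo tonelli} to extract minimizing curves, $v:=\lim_{t\to\infty}T_t^-u$ is a critical solution with $v\geq u$. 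Unfolding the infinite-time variational expression for $h$ and invoking $h(x_0,x_0)>0$ gives the strict gap $v(x_0)>u(x_0)$. Put $w_{x_0}:=(u+v)/2$; convexity of $H$ in $p$ makes $w_{x_0}$ a critical subsolution, and a contradiction argument---an equality $H(x_0,p)=c(H)$ for some $p\in\partial^c w_{x_0}(x_0)$ would, via the Fenchel equality of Proposition \ref{prop known}, force a calibrated curve through $x_0$ yielding $h(x_0,x_0)=0$---gives $H(x_0,p)<c(H)$ for every $p\in\partial^c w_{x_0}(x_0)$. Upper semicontinuity of $\partial^c w_{x_0}$ combined with continuity of $H$ then propagates this strict inequality to an open neighborhood $U_{x_0}$ of $x_0$, with a uniform gap.

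For the patching and smoothing stages, cover the Lindel\"of open set $M\setminus\A$ by countably many such $U_{x_n}$ with associated strict subsolutions $w_n:=w_{x_n}$ renormalized so $\|w_n\|_\infty\leq 1$, and define
\[
W := \sum_{n\geq 1} 2^{-n}\, w_n.
\]
The series converges uniformly to a Lipschitz function. Clarke calculus together with convexity of $H$ in $p$ (applied to finite truncations and then passed to the limit) show that $W$ is a critical subsolution satisfying $H(y,p)<c(H)$ for every $y\in M\setminus\A$ and every $p\in\partial^c W(y)$. Finally apply Theorem \ref{EncoreUneVariante} with $N=M$, $U=M\setminus\A$, and the shifted Hamiltonian $(y,p)\mapsto H(y,p)-c(H)$: the output $w$ is continuous on $M$, of class $C^\infty$ on $M\setminus\A$ with $H(y,d_y w)<c(H)$ there, and coincides with $W$ on $\A$ with matching pointwise derivatives where they exist.

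The principal obstacle is the Clarke-strictness upgrade in the local stage: translating the mere pointwise gap $v(x_0)>u(x_0)$ into strict negativity of $H$ over the entire subdifferential $\partial^c w_{x_0}(x_0)$. In the Tonelli setting the Hamiltonian flow and the characterization of calibrated curves as backward orbits make this automatic; here, with $H$ merely continuous and no flow available, one must substitute Theorem \ref{EncoreUneVariante} and the Fenchel inequality, following the line of \cite{DS, DZ10, FathiSurvey}. A secondary technical point is to guarantee that the smoothed function $w$ is $C^1$ on all of $M$ and not merely $C^\infty$ on $M\setminus\A$; this requires arranging that $W$ is already $C^1$ near $\A$, for instance by building into the countable convex combination a dedicated smooth layer produced by a preliminary run of the same scheme on a slight enlargement of $\A$, so that the derivative-matching clause of Theorem \ref{EncoreUneVariante} yields the required continuity of $d_\cdot w$ across $\A$.
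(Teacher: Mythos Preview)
First, the paper does not supply its own proof of this theorem; it is quoted with the attribution ``see \cite{FathiSurvey}''. Your three-stage architecture---local strictness at each point of $M\setminus\A$, countable convex averaging, then smoothing via Theorem \ref{EncoreUneVariante}---is indeed the standard route taken in that reference and in \cite{FS05}. Two problems, however, deserve attention.

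The more serious one is a misreading of the statement. The theorem only asserts that $w$ is of class $C^1$ \emph{on the open set} $M\setminus\A$, not on all of $M$. The paragraph immediately following the theorem in the paper makes this explicit: for a merely continuous Hamiltonian it is \emph{not known} whether one can obtain a critical subsolution that is $C^1$ across $\A$. Your ``secondary technical point''---arranging an extra smooth layer near $\A$ so that $w$ becomes globally $C^1$---is therefore an attempt to settle an open problem, and the mechanism you propose (a preliminary run on an enlargement of $\A$) cannot work as stated: any such enlargement meets $M\setminus\A$, and the derivative-matching clause of Theorem \ref{EncoreUneVariante} only transfers whatever differentiability $W$ already possesses on $\A$ itself. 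Simply drop this step; Theorem \ref{EncoreUneVariante} with $U=M\setminus\A$ already delivers everything the theorem actually claims.

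The local step also has a genuine gap. Starting from an \emph{arbitrary} critical subsolution $u$ and setting $v=\lim_{t\to\infty}T_t^-u$ need not give $v(x_0)>u(x_0)$: if $u$ happens to be a critical \emph{solution} then $T_t^-u=u$ for every $t$, hence $v\equiv u$ regardless of whether $x_0\in\A$. The strictness at $x_0$ must instead be extracted from subsolutions intrinsically tied to $x_0$---for instance the pair $h(x_0,\cdot)$ and $-h(\cdot,x_0)$, or the Ma\~n\'e potential---exploiting that $h(x_0,x_0)>0$ forces $h_t(x_0,x_0)\geq\varepsilon$ for all large $t$. Your contradiction clause (``a calibrated curve through $x_0$ would force $h(x_0,x_0)=0$'') is the right intuition, but in the absence of a Lagrangian flow it requires more than an appeal to Proposition \ref{prop known}; this is precisely the work carried out in \cite{FathiSurvey, FS05, DS}.
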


When the Hamiltonian is locally Lipschitz in $x$ and strictly convex in $p$, such a strict subsolution can be taken of class {\rm C}$^1$ on the whole manifold $M$  \cite{FSC1, FS05}, and even of class {\rm C}$^{1,1}$ when $H$ is Tonelli \cite{BernardC11}. It is not known whether the result keeps holding in the case of a purely continuous Hamiltonian.

A consequence of Theorem \ref{teo strict subsol} is that $\A$ is a uniqueness set for the critical equation. In fact, we have, see \cite{FathiSurvey}:

\begin{teorema}\label{teo uniqueness set}
Let $w,u$ be a critical sub and supersolution, respectively. If $w\leq u$ on $\A$, then $w\leq u$ on $M$. In particular, if two critical solutions coincide on the projected Aubry set $\A$, then they coincide on the whole manifold $M$. 
\end{teorema}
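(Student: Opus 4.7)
The plan is to first normalize by replacing $L$ with $L+c(H)$, reducing to the case $c(H)=0$ with $L\geq 0$ (and the constant functions being critical subsolutions, since $L(x,0)\geq 0$ is equivalent to $H(x,0)\leq 0$ by Fenchel). Under this normalization the statement cleanly splits into two tasks: identify $\mathcal{M}=\mathcal{A}=\{L(\cdot,0)=0\}$, and then derive the formula for $u_0$ from the representation $u_0=\hat u_0$ already proved in the preceding theorem.

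For the set identification, I will chain three inclusions with the standard $\mathcal{M}\subset\mathcal{A}$. First, $\mathcal{A}\subset\{L(\cdot,0)=0\}$: the constant function $0$ is a $C^1$ critical subsolution, so Lemma~\ref{AubryProjSub} applied to it forces $H(x,d_x0)=H(x,0)=0$ on the projected Aubry set $\mathcal{A}$; together with the Fenchel identity $L(x,0)+H(x,0)\geq 0\cdot 0=0$ and the assumption $L(x,0)\geq 0$, equality in Fenchel gives $L(x,0)=0$. Second, $\{L(\cdot,0)=0\}\subset\mathcal{M}$: for any $x_0$ with $L(x_0,0)=0$, the Dirac mass $\tilde\delta_{(x_0,0)}$ on $TM$ is a probability measure whose support is compact, it is closed because $\int_{TM}d_x\varphi(v)\,d\tilde\delta_{(x_0,0)}=d_{x_0}\varphi(0)=0$ for every $\varphi\in C^1(M,\R)$, and it is minimizing since $\int_{TM}L\,d\tilde\delta_{(x_0,0)}=L(x_0,0)=0=\min_{TM}L$. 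Hence $\tilde\delta_{(x_0,0)}\in\tilde{\Mis}_0(L)$ and $x_0\in\mathcal{M}$. Combining,
\[
\{L(\cdot,0)=0\}\subset\mathcal{M}\subset\mathcal{A}\subset\{L(\cdot,0)=0\},
\]
so all three sets coincide.

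For the formula, set $u_1(x):=\min\{h(y,x):L(y,0)=0\}$ (the minimum is attained by continuity of $h$ and closedness of the set). Using the already established $u_0(x)=\hat u_0(x)=\inf_{\mu\in\Mis_0(L)}\int_M h(y,x)\,d\mu(y)$, I establish both inequalities. For $u_0\leq u_1$: for each admissible $y$, the projection $\delta_y=\pi_\#\tilde\delta_{(y,0)}$ is a projected Mather measure by the previous paragraph, so $u_0(x)\leq\int_M h(z,x)\,d\delta_y(z)=h(y,x)$, and infimizing over $y$ gives $u_0\leq u_1$. For $u_0\geq u_1$: every projected Mather measure $\mu$ is supported in $\mathcal{M}=\{L(\cdot,0)=0\}$, so $h(y,x)\geq u_1(x)$ for $\mu$-a.e.\ $y$, whence $\int_M h(y,x)\,d\mu(y)\geq u_1(x)$; taking the infimum over $\mu\in\Mis_0(L)$ yields $u_0(x)\geq u_1(x)$.

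The only delicate points are the two bookkeeping facts appealed to from the appendix: that a Dirac at a point $(x,0)$ with $L(x,0)=0$ counts as a Mather measure in the sense of Definition~\ref{defMather}, and that every projected Mather measure is supported in $\mathcal{M}$. Both are essentially tautological once the appendix's definitions are unfolded, so I expect no real obstacle beyond verifying these compatibilities carefully, together with the preliminary $c(H)=0$ reduction.
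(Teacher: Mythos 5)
Your proposal does not prove the statement under review. Theorem \ref{teo uniqueness set} is a comparison principle: for an \emph{arbitrary} critical subsolution $w$ and an \emph{arbitrary} critical supersolution $u$, one must show that $w\leq u$ on the projected Aubry set $\A$ forces $w\leq u$ on all of $M$. What you prove instead is the identification $\M=\A=\{x\in M\mid L(x,0)+c(H)=0\}$ together with the formula $u_0(x)=\min\{h(y,x)\mid L(y,0)+c(H)=0\}$ under the hypothesis that the constants are critical subsolutions; that is the last proposition of Section 6 of the paper, a different result. Nowhere in your argument does a pair $(w,u)$ of critical sub/supersolutions with $w\le u$ on $\A$ appear, and no step yields the conclusion $w\le u$ on $M$. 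Two further problems compound this. First, your opening ``normalization'' is not a reduction: replacing $L$ by $L+c(H)$ sets the critical value to $0$ but does not give $L\ge 0$; the inequality $L+c(H)\ge 0$ is a genuine extra hypothesis (equivalent to the constants being critical subsolutions), so even reinterpreted charitably your argument lives in a special case, not in the generality of the theorem. Second, the route could not be repaired into a proof of Theorem \ref{teo uniqueness set} anyway, because you rely on the identity $u_0=\hat u_0$, whose proof in the paper itself invokes Theorem \ref{teo uniqueness set} (precisely to reduce the comparison of $\hat u_0$ and $u_0$ to the Aubry set); the argument would be circular.

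For orientation: the paper does not reprove this theorem but quotes it from \cite{FathiSurvey}, presenting it as a consequence of Theorem \ref{teo strict subsol}. The standard argument takes the ${\rm C}^1$ critical subsolution $v$ which is strict on $M\setminus\A$, forms for $\lambda\in(0,1)$ the convex combination $w_\lambda=\lambda w+(1-\lambda)v$, which by convexity of $H$ in $p$ is a critical subsolution that is strict outside $\A$, and shows (using, e.g., an approximation of the kind employed in the proof of Theorem \ref{teo comparison discounted eq}) that $\min_M(u-w_\lambda)$ must be attained on $\A$; there $u-w_\lambda\geq \lambda(u-w)+(1-\lambda)(u-v)\geq -(1-\lambda)\lVert u-v\rVert_\infty$, and letting $\lambda\to 1$ gives $w\leq u$ on $M$. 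Mather measures and the formula for $u_0$ play no role in this statement.
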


The next result provides a converse of Theorem \ref{teo strict subsol}. In particular, it implies that $\A$ is the set where the obstruction to the existence of strict critical subsolutions concentrates. 

\begin{prop}\label{prop subsol on A}
Let $y\in\A$. For every critical subsolution $w$ we have 
\[
 H(y,p)=c(H)\text{ for every $p\in D^- w(y)$.}
\]
\end{prop}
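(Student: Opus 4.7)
The plan is to prove the two inequalities $H(y,p)\le c(H)$ and $H(y,p)\ge c(H)$ separately.

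The first one is immediate and does not use the hypothesis $y\in\A$. Since $w$ is critical (hence Lipschitz, by Proposition \ref{COMPBASIQUE}) and $H(x,\cdot)$ is convex, Proposition \ref{prop subsol equivalent def} applied with $G(u,x,p)=H(x,p)-c(H)$ upgrades the subsolution inequality on $D^+w$ to the same inequality on $D^-w$. This gives $H(y,p)\le c(H)$ for every $p\in D^-w(y)$.

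For the reverse inequality, the idea I would pursue is to compare $w$ with the Peierls barrier $h(y,\cdot)$, which is a critical solution by part (d) of Proposition \ref{prop h}. Assume $c(H)=0$, as in the excerpt, and set $v=w-w(y)$; this is still a critical subsolution and $v(y)=0$. By part (b) of Proposition \ref{prop h} one has $v(x)\le h(y,x)$ for every $x\in M$, and since $y\in\A$ the two sides agree at $y$: $v(y)=0=h(y,y)$.

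The key step is to transfer $D^-$ from $v$ to $h(y,\cdot)$ at $y$. Given $p\in D^-w(y)=D^-v(y)$, pick a $C^1$ subtangent $\phi$ of $v$ at $y$ with $\phi(y)=0$ and $d_y\phi=p$. Because $\phi\le v\le h(y,\cdot)$ in a neighborhood of $y$ and all three functions agree at $y$, the same $\phi$ is a subtangent to $h(y,\cdot)$ at $y$; hence $p\in D^-(h(y,\cdot))(y)$. Since $h(y,\cdot)$ is in particular a critical supersolution, this yields $H(y,p)\ge 0=c(H)$, which combined with the previous paragraph gives the desired equality.

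I do not foresee a serious obstacle: the heart of the argument is the two-line subdifferential transfer in the last paragraph, and everything else is already packaged into the cited propositions. The only point to keep in mind is that the inequality $v\le h(y,\cdot)$ holds with equality at $y$ precisely because $y\in\A$; this is the unique place where the hypothesis enters the proof.
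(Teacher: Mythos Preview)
Your proof is correct and follows essentially the same route as the paper's: the paper sets $u(x)=w(y)+h(y,x)$, observes $w\le u$ with equality at $y$ since $h(y,y)=0$, deduces $D^-w(y)\subset D^-u(y)$, and then uses that $u$ is a critical solution. The only cosmetic difference is that you subtract the constant $w(y)$ from $w$ rather than adding it to $h(y,\cdot)$, and you treat the inequality $H(y,p)\le c(H)$ separately via Proposition~\ref{prop subsol equivalent def} on $w$ itself rather than obtaining it through the subsolution property of $u$; the core subdifferential-transfer argument is identical.
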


\begin{proof}
Let $w$ be a critical subsolution. By Proposition \ref{prop h}--{\em (ii)}, we have 
\[
 w(x)\leq w(y)+h(y,x)=:u(x), \text{ for every $x\in M$,}
\]
with equality holding for $x=y$ since $h(y,y)=0$ by definition of the projected Aubry set. Then $D^-w(y)\subset D^- u(y)$, so the assertion follows from Proposition \ref{prop subsol equivalent def} and the fact that $u$ is a critical solution.
\end{proof}

Let us now consider the symmetric function $\delta_M$ defined by
$$
\delta_M(x,y)=h(x,y)+h(y,x),\qquad {x,y\in M.} 
$$
Proposition \ref{prop h} yields that $\delta_M$ is always nonnegative and satisfies the triangular inequality. Moreover, it induces a semidistance on $\A$, i.e. a function which fails to be a distance because the equality $\delta_M(x,y)=0$ for $x,y\in\A$ does not necessarily imply that $x=y$. 

To make $\delta_M$ a distance, we introduce a  natural equivalence relation on $\A$, called the {\em Mather relation}, defined 
by $x\sim y$ if $h(x,y)+h(y,x)=0$. Its equivalence classes are called the {\em Mather classes}. 
Two points $x$ and $y$ are in the same  {Mather class } if and only if
$$
h(x,y)+h(y,x)=0.
$$
Clearly, $\delta_M$ is a distance function in the quotient set $\A/_\sim$\  formed by Mather classes. This quotient, endowed with the distance   $\delta_M$, is known as {\em Mather quotient}.

\begin{prop}\label{prop quotient subsol} Let $w:M\to \R$ be a critical subsolution. Then 
$$w(x)-w(y)=h(y,x)$$
for every $x,y\in \A$ in the same Mather class. 
Moreover, if $w_1,w_2:M\to\R$ are critical subsolutions, we have 
$$
\lvert(w_1-w_2)(x)-(w_1-w_2)(y)\rvert \leq \delta_M(y,x)\qquad\hbox{for every $x,y\in M$}.
$$
In particular, $w_1-w_2$ is constant on each Mather class.
\end{prop}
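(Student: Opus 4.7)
The plan is to derive both claims as short consequences of the single inequality provided by Proposition \ref{prop h}, part b), namely that any critical subsolution $w$ satisfies $w(x)-w(y)\leq h(y,x)$ for all $x,y\in M$. Everything else is arithmetic on top of this.

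For the first assertion, I would fix $x,y\in\A$ in the same Mather class and apply the above inequality in both directions: $w(x)-w(y)\leq h(y,x)$ and $w(y)-w(x)\leq h(x,y)$. Summing yields
\[
0\leq h(y,x)+h(x,y)=\delta_M(x,y),
\]
which by definition of the Mather relation equals zero. Hence both inequalities must be equalities, giving $w(x)-w(y)=h(y,x)$ as required. This is the only place where the hypothesis "same Mather class" is used.

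For the second assertion, I would again invoke Proposition \ref{prop h}(b), this time once for each subsolution and in opposite directions: $w_1(x)-w_1(y)\leq h(y,x)$ and $w_2(y)-w_2(x)\leq h(x,y)$. Adding these gives
\[
(w_1-w_2)(x)-(w_1-w_2)(y)\leq h(y,x)+h(x,y)=\delta_M(y,x).
\]
Swapping the roles of $x$ and $y$ (and using the symmetry $\delta_M(x,y)=\delta_M(y,x)$) produces the reverse inequality, yielding the absolute-value bound. Since $\delta_M(x,y)=0$ exactly when $x$ and $y$ belong to the same Mather class, this bound immediately forces $w_1-w_2$ to be constant on each Mather class.

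I do not foresee a genuine obstacle here: the heavy lifting, namely the triangle-type inequalities of Proposition \ref{prop h} and the domination $w(x)-w(y)\leq h(y,x)$, has already been established. The only care required is to keep the order of the arguments in $h(\cdot,\cdot)$ straight, since $h$ is not symmetric, and to invoke the two subsolutions in opposite directions so that the right-hand sides combine into $\delta_M$ rather than into something asymmetric.
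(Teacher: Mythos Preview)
Your proposal is correct and follows essentially the same approach as the paper's own proof: both parts are obtained by applying the domination inequality $w(x)-w(y)\leq h(y,x)$ from Proposition \ref{prop h}(b) in the two directions (or to the two subsolutions in opposite directions), adding, and using that the sum $h(y,x)+h(x,y)=\delta_M(y,x)$ vanishes on a Mather class. The paper's argument is identical in substance and nearly identical in presentation.
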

\begin{proof} By Proposition \ref{prop h}{\em--(ii)} we have
\begin{align*} 
w(x)-w(y)&\leq h(y,x)\\
w(y)-w(x)&\leq h(x,y).
\end{align*}
If we add these two inequalities, we get
$$
w(x)-w(y) + w(y)-w(x)\leq h(y,x)+h(x,y)=\delta_M(y,x).
$$
But the left hand side is  zero, and so is the the right hand side if $x,y$ are in the same Mather class. Hence the sum of the two inequalities is an equality, therefore both inequalities are equalities.

In the same way, from  Proposition \ref{prop h}{\em --(ii)} we get
\begin{align*} 
w_1(x)-w_1(y)&\leq h(y,x)\\
w_2(y)-w_2(x)&\leq h(x,y).
\end{align*}
Adding and rearranging yields
$$(w_1-w_2)(x)-(w_1-w_2)(y) \leq \delta_M(x,y).$$
Since $\delta_M$ is symmetric, we get
$$\lvert(w_1-w_2)(x)-(w_1-w_2)(y)\rvert \leq \delta_M(x,y).$$
The rest of the proposition is obvious.
\end{proof}

The following holds:

\begin{teorema}\label{teo static curve} Let $y\in\A$. Then there exists an absolutely continuous curve $\gamma:\R\to M$ with  $\gamma(0)=y$ such that, for every $a, b\in\R$,  with $a\leq b$, we have
\begin{itemize}
 \item[\rm (i)] \ $\displaystyle{\int_a^b L(\gamma(s),\dot\gamma(s))+c(H)\,d s
		    =h(\gamma(a),\gamma(b))}$;\medskip
 \item[\rm (ii)] \ $h(\gamma(a),\gamma(b))+h(\gamma(b),\gamma(a))=0$.\medskip  
\end{itemize}
In particular, the curve $\gamma$ is supported in the Mather class of $y$.
\end{teorema}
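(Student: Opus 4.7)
The plan is to realize $\gamma$ as a uniform-on-compacts limit of $t_n$-periodic curves built from almost-minimizers of $h_{t_n}(y,y)$ with $t_n\to +\infty$. Since $y\in\A$ we have $h(y,y)=0$, so pick $t_n\to +\infty$ with $h_{t_n}(y,y)\to 0$; by Theorem \ref{teo tonelli}, the infimum defining each $h_{t_n}(y,y)$ is attained by absolutely continuous $\sigma_n:[-t_n,0]\to M$ with $\sigma_n(-t_n)=\sigma_n(0)=y$. Extend each $\sigma_n$ to $\gamma_n:\R\to M$ by $t_n$-periodicity: then $\gamma_n(0)=y$ for every $n$, and the total action on any interval of length $t_n$ equals $h_{t_n}(y,y)$. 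Fixing a critical subsolution $w$ (Theorem \ref{teo strict subsol}), the subsolution action bound $\int_a^b(L+c(H))(\gamma_n,\dot\gamma_n)\,ds\geq w(\gamma_n(b))-w(\gamma_n(a))$ combined with this period-wise identity provides uniform upper bounds on $\int_{-T}^T L(\gamma_n,\dot\gamma_n)\,ds$ for every $T>0$; Theorem \ref{teo tonelli} and a diagonal extraction then yield a subsequence converging uniformly on compacts to an absolutely continuous $\gamma:\R\to M$ with $\gamma(0)=y$.

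The heart of the argument is the splitting of one period: for $a\leq b$ and $n$ large,
\[
h_{b-a}(\gamma_n(a),\gamma_n(b))+h_{t_n-(b-a)}(\gamma_n(b),\gamma_n(a))\leq \int_a^{a+t_n}(L+c(H))(\gamma_n,\dot\gamma_n)\,ds = h_{t_n}(y,y) \to 0.
\]
The first left-hand term converges to $h_{b-a}(\gamma(a),\gamma(b))$ by the standard continuity of $h_t$ in its endpoints. To lower-bound the $\liminf$ of the second term, I test against critical subsolutions supplied by Proposition \ref{prop h}(d)--(e), which produce clean bounds thanks to $y\in\A$ and $h(y,y)=0$. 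Specializing to $(a,b)=(0,s)$ with $w=-h(\cdot,y)$ and to $(a,b)=(-s,0)$ with $w=h(y,\cdot)$, then invoking Proposition \ref{prop h}(c) with intermediate point $z=y$, one obtains $h(y,\gamma(s))+h(\gamma(s),y)\leq 0$; combined with the universal lower bound $\geq 0$ from Proposition \ref{prop h}(b), equality follows and $\gamma(s)\sim y$, so $\gamma(s)\in\A$ for every $s\in\R$.

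Once $\gamma(a)\in\A$ is available at every $a$, the subsolution $-h(\cdot,\gamma(a))$ becomes admissible, and the same splitting now upgrades to $h_{b-a}(\gamma(a),\gamma(b))+h(\gamma(b),\gamma(a))=0$ for every $a\leq b$. The triangle inequality in Proposition \ref{prop h}(c) through $y$, together with $\gamma(a),\gamma(b)\sim y$, yields statement (ii). Combining (ii) with the preceding identity gives $h_{b-a}(\gamma(a),\gamma(b))=h(\gamma(a),\gamma(b))$, and a final use of Tonelli lower semicontinuity (Theorem \ref{teo tonelli}) inside the splitting, paired with the trivial $\int_a^b(L+c(H))(\gamma,\dot\gamma)\,ds\geq h_{b-a}(\gamma(a),\gamma(b))$, pins down $\int_a^b(L+c(H))(\gamma,\dot\gamma)\,ds=h(\gamma(a),\gamma(b))$, proving (i). The hardest step will be the passage to the $\liminf$ in $h_{t_n-(b-a)}(\gamma_n(b),\gamma_n(a))$ along the diverging horizon $t_n-(b-a)\to +\infty$: a naive bound would lose mass, and averting this is precisely what forces the Peierls-barrier-based test subsolutions rooted at $y\in\A$.
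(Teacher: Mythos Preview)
The paper does not actually prove this theorem: it cites the external references \cite{DS} and \cite{DZ10} for the existence of the static curve satisfying (i) and (ii), and only observes that the final sentence follows trivially from (ii). Your proposal therefore supplies more than the paper does.

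Your construction is correct and is essentially the standard one (and presumably what those references contain). The key steps all check out: the periodic extensions $\gamma_n$ have action $h_{t_n}(y,y)$ on any window of length $t_n$; the subsolution bound from Proposition~\ref{prop h_t characterization} gives the uniform action control needed for Theorem~\ref{teo tonelli} and the diagonal extraction; and the two-stage bootstrap is exactly right. In the first stage, specializing to $a=0$ lets you exploit $\gamma_n(0)=y\in\A$ so that the test subsolutions $h(y,\cdot)$ and $-h(\cdot,y)$ yield sharp bounds (via $h(y,y)=0$), and Proposition~\ref{prop h}(c) with $z=y$ converts the $h_s$-term into an $h$-term, giving $\gamma(s)\sim y$ for all $s$. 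In the second stage, $\gamma(a)\in\A$ makes $-h(\cdot,\gamma(a))$ usable in the same way, producing $h_{b-a}(\gamma(a),\gamma(b))+h(\gamma(b),\gamma(a))=0$; combined with (ii) and Tonelli lower semicontinuity this pins down (i).

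One minor remark: you invoke continuity of $(x,x')\mapsto h_t(x,x')$, which the paper never states. It is standard and provable with the tools available (short geodesic surgery, or the subsolution bound for the Lipschitz lower control), but you could also bypass it entirely: once $\gamma(a)\in\A$, lower-bound the first piece of the splitting directly by the subsolution $h(\gamma(a),\cdot)$, which gives $\liminf h_{b-a}(\gamma_n(a),\gamma_n(b))\geq h(\gamma(a),\gamma(b))$ without any continuity statement for $h_t$.
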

\begin{proof}
The existence of a curve satisfying {\em (i)} and {\em (ii)} above, in the case of a purely continuous Hamiltonian, is proved in \cite{DS}, see also \cite{DZ10}. It is then clear by {\em (ii)} that $\gamma(t)\sim y$ for every $t\in\R$. 
\end{proof}

A curve satisfying assumptions (i) and (ii) in the above statement is called {\em static}.

\begin{prop}\label{PROPSTATIC} Any static curve $\gamma:\R\to M$ 
 is $\alpha_0$--Lipschitz for some constant $\alpha_0$ only depending on $H$. 
Moreover, for every critical subsolution $u:M\to\R$, and every $a,b\in \R$, 
with  $a\leq b$, we have
$$u(\gamma(b))-u(\gamma(a))=\int_a^b L(\gamma(s),\dot\gamma(s))+c(H)\,d s.$$
\end{prop}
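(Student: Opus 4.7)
The two assertions will be handled separately, with the energy identity coming first and the Lipschitz bound following from it by specializing the identity to a $C^1$ critical subsolution. For the identity $u(\gamma(b))-u(\gamma(a))=\int_a^b L(\gamma,\dot\gamma)+c(H)\,d s$ valid for every critical subsolution $u$, the key input is Proposition \ref{prop h}(b), which gives simultaneously
\[
u(\gamma(b))-u(\gamma(a))\leq h(\gamma(a),\gamma(b)),\qquad u(\gamma(a))-u(\gamma(b))\leq h(\gamma(b),\gamma(a)).
\]
The sum of the two right-hand sides equals zero by the static property (ii), so both inequalities must saturate; combining with (i) identifies the common value with $\int_a^b L(\gamma,\dot\gamma)+c(H)\,d s$.

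For the Lipschitz estimate, I apply the just-proved identity to the $C^1$ critical subsolution $w$ furnished by Theorem \ref{teo strict subsol}, which is globally $\kappa$-Lipschitz with $\kappa$ depending only on $H$ (Proposition \ref{COMPBASIQUE}). The fundamental theorem of calculus along the absolutely continuous curve $\gamma$ rewrites the identity as
\[
\int_a^b\bigl[L(\gamma(s),\dot\gamma(s))+c(H)-d_{\gamma(s)} w(\dot\gamma(s))\bigr]\,d s=0.
\]
The integrand is nonnegative almost everywhere by Fenchel's inequality combined with $H(x,d_xw)\leq c(H)$, hence vanishes a.e. This forces both the pointwise Fenchel equality $L(\gamma(s),\dot\gamma(s))+H(\gamma(s),d_{\gamma(s)}w)=d_{\gamma(s)}w(\dot\gamma(s))$ and $H(\gamma(s),d_{\gamma(s)}w)=c(H)$ to hold for a.e.\ $s$. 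Proposition \ref{prop known} then gives $\dot\gamma(s)\in\partial_p H(\gamma(s),d_{\gamma(s)}w)$ a.e., so the velocity lies a.e.\ in the set
\[
K:=\{(x,v)\in TM\mid \exists\, p\in T^*_xM,\ \|p\|_x\leq\kappa,\ v\in\partial_pH(x,p)\}.
\]

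The set $K$ is compact: closedness follows from the upper semicontinuity of the subdifferential map $(x,p)\mapsto\partial_pH(x,p)$ on the compact set $\{(x,p)\in T^*M\mid \|p\|_x\leq\kappa\}$, while boundedness follows from the superlinearity of $L$ (if $v\in\partial_pH(x,p)$ with $\|p\|_x\leq\kappa$, then $L(x,v)=p(v)-H(x,p)\leq \kappa\|v\|_x+C$ for a uniform constant $C$, which together with $L(x,v)/\|v\|_x\to+\infty$ forces $\|v\|_x$ to be uniformly bounded). Hence $\alpha_0:=\sup_{(x,v)\in K}\|v\|_x<+\infty$ serves as the desired universal Lipschitz constant. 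The main delicate step is the passage from the integral identity to the pointwise Fenchel equality along $\gamma$, which relies both on the saturation just derived and on the $C^1$ regularity of $w$; using a merely Lipschitz critical subsolution would force one to invoke Corollary \ref{FORMOYEN} and the Clarke-gradient machinery of Appendix A to realize the supremum defining $\partial^+w(\gamma(s),\dot\gamma(s))$ by some $p^*(s)\in\partial^cw(\gamma(s))$, an equivalent but more technical route.
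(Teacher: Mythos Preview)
Your argument for the identity $u(\gamma(b))-u(\gamma(a))=\int_a^b L(\gamma,\dot\gamma)+c(H)\,ds$ is correct and is exactly the paper's argument: add the two inequalities from Proposition~\ref{prop h}(b), use the static property~(ii) to see the sum is $0\leq 0$, and conclude both are equalities.

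For the Lipschitz bound, however, your main line has a gap. Theorem~\ref{teo strict subsol} does \emph{not} furnish a globally $C^1$ critical subsolution in the generality of this paper: the subsolution $w$ is $C^1$ (and strict) only on $M\setminus\A$, and the remark immediately following the theorem states that global $C^1$ regularity is not known for merely continuous Hamiltonians. Since any static curve lies entirely in $\A$ (property~(ii) with $a=b$ gives $h(\gamma(a),\gamma(a))=0$), the derivative $d_{\gamma(s)}w$ need not exist anywhere along $\gamma$, so the fundamental-theorem-of-calculus step and the pointwise Fenchel equality you write down are not justified. You correctly indicate the repair at the end---work with a Lipschitz subsolution via Corollary~\ref{FORMOYEN} and Proposition~\ref{CARACSUBSOL} to obtain $\partial^+w(\gamma(s),\dot\gamma(s))=L(\gamma(s),\dot\gamma(s))+c(H)$ a.e., then realize the sup by some $p^*(s)\in\partial^cw(\gamma(s))$ with $\lVert p^*(s)\rVert\leq\kappa$---and that route does go through, but it should be the main argument rather than an afterthought.

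The paper avoids all of this with a more elementary estimate that does not pass through the subsolution identity at all. It uses the static property~(i) directly: since $h(z,\cdot)$ is a critical solution and hence $\kappa$-Lipschitz, one has $\int_a^{a+h}L(\gamma,\dot\gamma)+c(H)\,ds=h(\gamma(a),\gamma(a+h))\leq\kappa\,d(\gamma(a),\gamma(a+h))$; plugging in the superlinearity bound $L(x,v)\geq(\kappa+1)\lVert v\rVert_x-A_\kappa$, using that length dominates distance, dividing by $h$ and sending $h\to 0$ gives $\lVert\dot\gamma(a)\rVert\leq A_\kappa-c(H)$ a.e. Your approach, once fixed, yields the extra information that $(\gamma(s),\dot\gamma(s))\in\tilde\A$ a.e.\ (this is essentially Lemma~\ref{AubryProjSub}), but for the bare Lipschitz bound the paper's route is shorter and sidesteps the regularity issue.
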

\begin{proof}
Let us prove that $\gamma$ is Lipschitz. The function $h(z,\cdot)$ is, for every fixed $z\in M$, a critical solution, in particular it is ${\kappa}$--Lipschitz continuous by Proposition \ref{prop equi-Lipschitz sol}. From property (i)  of static curves, we derive 
\begin{equation}\label{eq critical curve 1}
\int_a^{a+h} L(\gamma(s),\dot\gamma(s))+c(H)\,d s
\leq
{\kappa}\, d(\gamma(a),\gamma(a+h)), 
\end{equation}
for every $a\in\R$ and $h>0$.
By the superlinearity of $L$, there exists a constant $A_{\kappa}$ such that 
\[
 L(x,v)\geq ({\kappa}+1)\|v\|_x-A_{\kappa}, \text{ for every $(x,v)\in TM$.}
\]
We  plug this inequality in \eqref{eq critical curve 1} and we use the fact that the length of a curve is greater or equal than the distance between its extreme points. By dividing by $h$ we end up with
\begin{equation}
\frac{1}{h}\,\int_{a}^{a+h} \|\dot\gamma (s)\|_{\gamma(s)}\,d s
\leq
A_{\kappa}-c(H).
\end{equation}
Sending $h\to 0$ we infer 
\[
 \|\dot\gamma  (a)\|_{\gamma (a)}\leq A_{\kappa}-c(H),\text{ for a.e. $a\in\R$,}
\]
as it was to be shown. 
To prove the last equality of the theorem, we observe that we have
\begin{align*}
u(\gamma(b))-u(\gamma(a))&\leq h(\gamma(a),\gamma(b))=\int_a^b L(\gamma(s),\dot\gamma(s))+c(H)\,d s,\\
u(\gamma(a))-u(\gamma(b))&\leq h(\gamma(b),\gamma(a)).
\end{align*}
If we add these two inequalities, we get the inequality $0\leq 0$. Therefore both inequalities must be equalities.
\end{proof}
\subsection{The Aubry set in $TM$}
\begin{prop}\label{CARACSUBSOL} Let $u: M\to \R$ be a Lipschitz function. 
Then $u$ is a viscosity subsolution of $H(x,d_xu)=c$
if and only if $\partial^+u(x,v)\leq L(x,v)+c$, for every $(x,v)\in TM$.
\end{prop}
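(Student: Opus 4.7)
The statement is essentially a Fenchel duality reformulation of the subsolution condition (iii) in Proposition \ref{prop subsol equivalent def}, namely that $u$ is a subsolution iff $H(x,p)\leq c$ for every $p\in\partial^c u(x)$ and every $x\in M$. Since $\partial^+ u(x,v)$ is defined as the supremum of $p(v)$ over $p\in\partial^c u(x)$, one direction should come from applying Fenchel inequality to every $p\in\partial^c u(x)$, and the other direction should come from taking the supremum over $v$ in $p(v)-L(x,v)$ to recover $H(x,p)$.

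For the forward direction, I would assume $u$ is a viscosity subsolution of $H(x,d_xu)=c$. Since $u$ is Lipschitz and $H$ is convex in $p$, Proposition \ref{prop subsol equivalent def}(iii) gives $H(x,p)\leq c$ for every $p\in\partial^c u(x)$. Fixing $(x,v)\in TM$, Fenchel inequality \eqref{Fenchel inequality} yields, for each such $p$,
\[
p(v)\leq L(x,v)+H(x,p)\leq L(x,v)+c.
\]
Taking the supremum over $p\in\partial^c u(x)$ (which is attained by compactness) produces $\partial^+ u(x,v)\leq L(x,v)+c$.

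For the reverse direction, I would assume $\partial^+ u(x,v)\leq L(x,v)+c$ for every $(x,v)\in TM$, and aim at verifying condition (iii) of Proposition \ref{prop subsol equivalent def}. Fix $x\in M$ and $p\in\partial^c u(x)$. By definition of $\partial^+ u$, for every $v\in T_xM$,
\[
p(v)\leq \partial^+u(x,v)\leq L(x,v)+c,
\]
hence $p(v)-L(x,v)\leq c$. Taking the supremum over $v\in T_xM$ and using the identity $H(x,p)=\sup_{v\in T_xM}\{p(v)-L(x,v)\}$ from Proposition \ref{prop known}, we obtain $H(x,p)\leq c$. This holds for every $p\in\partial^c u(x)$ and every $x$, so Proposition \ref{prop subsol equivalent def}(iii) applies (noting $u$ is Lipschitz by hypothesis), giving the viscosity subsolution property.

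There is essentially no obstacle beyond bookkeeping: the content is just the Fenchel transform switching between $p$-side and $v$-side formulations, combined with the fact that $\partial^+u(x,v)$ packages all of $\partial^c u(x)$ into a single functional of $v$. The only mild subtlety is making sure to invoke Proposition \ref{prop subsol equivalent def} (which requires convexity of $H$ in $p$, granted here) to pass between the viscosity condition and the Clarke-subdifferential condition; without convexity the equivalence would fail.
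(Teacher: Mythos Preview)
Your proof is correct and follows essentially the same Fenchel-duality argument as the paper. The only cosmetic difference is that you invoke the Clarke-derivative characterization (iii) of Proposition~\ref{prop subsol equivalent def} directly, whereas the paper routes through differentiability points (condition (iv)) and then passes to $\partial^*u(x)$ by limits and to $\partial^c u(x)$ by convex hull; both paths are equivalent and yield the same conclusion.
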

\begin{proof} Suppose the function $u$ is a viscosity subsolution of $H(x,d_xu)=c$, then   $H(x,d_xu)\leq c$ at every point where the derivative $d_xu$ exists, therefore by Fenchel's inequality
$$d_xu(v)\leq L(x,v)+H(x,p)\leq L(x,v)+c.$$
This implies that for every $p\in \partial^*u(x)$, and every $v\in T_xM$, we have $p(v)\leq L(x,v)+c$. Since $\partial^cu(x)$ is the convex envelop of
$\partial^*u(x)$, the inequality $p(v)\leq L(x,v)+c$ remains true for any $p\in \partial^cu(x)$. Taking the sup over such $p$, yields 
$\partial^+u(x,v)\leq L(x,v)+c$. 

Conversely, if $\partial^+u(x,v)\leq L(x,v)+c$, at a point $x$ of differentiability of $u$ we get
$d_xu(v)\leq \partial^+u(x,v)\leq L(x,v)+c$. From this, we obtain $H(x,d_xu)=\sup_{v\in T_xM}d_x(u)-L(x,v)\leq c$. Since $H$ is convex in $p$, we infer that $u$ is a viscosity subsolution of $H(x,d_xu)=c$.
\end{proof}
For a critical subsolution $u:M\to\R$, we define 
$$\leb(u)=\{(x,v)\mid \partial^+u(x,v)=L(x,v)+c(H)\},$$
where $c(H)$ is the critical value of $H$.
\begin{lemma}\label{AubryProjSub} For every critical subsolution, the set $\leb(u)\subset TM$ is compact and $\pi(\leb(u))\supset \A$. In fact, if $\gamma:\R\to M$ is a static curve, then
for almost every $s\in \R$, we have $(\gamma(s),\dot\gamma(s))\in \leb(u)$.
\end{lemma}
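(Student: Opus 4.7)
The plan is to prove the three assertions in the order: (a) compactness of $\leb(u)$, (b) equality of $\partial^+u$ and $L+c(H)$ almost everywhere along any static curve, and (c) the inclusion $\pi(\leb(u))\supset \A$, which will follow by combining (a) and (b) with the existence of static curves through Aubry points.

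For (a), I would use Proposition \ref{CARACSUBSOL} to rewrite
$$\leb(u)=\{(x,v)\in TM\mid \partial^+u(x,v)-L(x,v)\geq c(H)\}.$$
Since $\partial^cu$ is upper semi-continuous with compact values, the function $(x,v)\mapsto \partial^+u(x,v)=\sup\{p(v)\mid p\in \partial^cu(x)\}$ is upper semi-continuous on $TM$ (this was observed right after the definition of the Clarke gauges). Combined with continuity of $L$, the superlevel set displayed above is closed. For boundedness, note that $u$ is Lipschitz (Proposition \ref{COMPBASIQUE}) with some constant $\kappa$, so $\|p\|_x\leq\kappa$ for every $p\in \partial^cu(x)$ and every $x$; hence $\partial^+u(x,v)\leq \kappa\|v\|_x$. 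On $\leb(u)$ this gives $L(x,v)+c(H)\leq \kappa \|v\|_x$, which by the superlinearity (H2$'$) of $L$ forces $\|v\|_x\leq R$ for some $R=R(\kappa,H)$. Since $M$ is compact, $\leb(u)$ is thus compact.

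For (b), let $\gamma:\R\to M$ be a static curve. By Proposition \ref{PROPSTATIC}, for every $a<b$,
$$u(\gamma(b))-u(\gamma(a))=\int_a^b L(\gamma(s),\dot\gamma(s))+c(H)\,ds.$$
By Corollary \ref{FORMOYEN} applied to the locally Lipschitz function $u$,
$$u(\gamma(b))-u(\gamma(a))\leq \int_a^b \partial^+u(\gamma(s),\dot\gamma(s))\,ds.$$
By Proposition \ref{CARACSUBSOL}, the pointwise inequality $\partial^+u(x,v)\leq L(x,v)+c(H)$ holds on all of $TM$. Chaining these three relations shows
$$\int_a^b\big[L(\gamma(s),\dot\gamma(s))+c(H)-\partial^+u(\gamma(s),\dot\gamma(s))\big]\,ds\leq 0$$
for every $a<b$, while the integrand is non-negative. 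Therefore the integrand vanishes for almost every $s\in\R$, i.e. $(\gamma(s),\dot\gamma(s))\in \leb(u)$ almost everywhere, which is precisely the last assertion.

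For (c), fix $y\in \A$. Theorem \ref{teo static curve} provides a static curve $\gamma:\R\to M$ with $\gamma(0)=y$, and Proposition \ref{PROPSTATIC} ensures $\gamma$ is $\alpha_0$-Lipschitz, so $\dot\gamma$ exists almost everywhere and is bounded. By (b), we can pick a sequence $s_n\to 0$ of times at which $\gamma$ is differentiable and $(\gamma(s_n),\dot\gamma(s_n))\in \leb(u)$. The sequence $(\gamma(s_n),\dot\gamma(s_n))$ lies in the compact set $\leb(u)$ by (a), so after extracting a subsequence it converges to some $(y,v)\in \leb(u)$, where $\gamma(s_n)\to \gamma(0)=y$ by continuity. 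Hence $y\in\pi(\leb(u))$, proving $\A\subset \pi(\leb(u))$.

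The only step requiring real care is (b), where one must notice that three a priori independent relations (the static-curve equality, the Clarke-gauge inequality of Corollary \ref{FORMOYEN}, and the pointwise subsolution inequality of Proposition \ref{CARACSUBSOL}) squeeze together to force almost-everywhere equality; everything else is a matter of assembling the compactness and extraction arguments.
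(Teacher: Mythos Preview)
Your proof is correct and follows essentially the same route as the paper: the same Lipschitz/superlinearity bound for boundedness, the same upper semi-continuity argument for closedness (you phrase it slightly more slickly as a superlevel set), and the same squeeze between Proposition~\ref{PROPSTATIC}, Corollary~\ref{FORMOYEN}, and Proposition~\ref{CARACSUBSOL} for part~(b). The only cosmetic difference is in~(c): the paper observes that $\gamma(s)\in\pi(\leb(u))$ for a.e.\ $s$ and then uses continuity of $\gamma$ and closedness of $\pi(\leb(u))$ to get $\gamma(0)=y\in\pi(\leb(u))$ directly, whereas you extract a subsequence in $TM$---both work, but the paper's version avoids the extraction.
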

\begin{proof} Denote by $\kappa$ a Lipschitz constant for $u$. Then 
$\lVert d_xu\lVert_x\leq \kappa$ at every point where $d_xu$ exits. Therefore
$ \partial^+u(x,v)\leq \kappa\lVert v\rVert_x$. In particular, 
for every $(x,v)\in \leb(u)$, we have
$L(x,v)+c(H)\leq \kappa\lVert v\rVert_x$. Since $L$ is superlinear, we can find a finite constant $A_\kappa$ such that $L(x,v)\geq (\kappa+1)\lVert v\rVert_x-A_\kappa$, for every $(x,v)\in TM$. Therefore $\kappa\lVert v\rVert_x\geq (\kappa+1)\lVert v\rVert_x-A_\kappa+c(H)$, and $\lVert v\rVert_x\leq A_\kappa-c(H)$, for $(x,v)\in \leb(u)$. This proves that $\leb(u)$ is relatively compact in $TM$. We now prove that it is closed. Suppose $(x_n,v_n)\to (x,v)$ with
$$\partial^+u(x_n,v_n)=L(x_n,v_n)+c(H).$$
By continuity of $L$, the right hand side tends to $L(x,v)+c(H)$. By the upper semi-continuity
of $\partial^+u$,  we obtain $L(x,v)+c(H)\leq \partial^+u(x,v)$. But the reverse inequality is true
by Proposition \ref{CARACSUBSOL}. 

Suppose now that $\gamma$ is a static curve. From Proposition \ref{CARACSUBSOL}, we get
$$ \partial^+u(\gamma(s),\dot\gamma(s))\leq L(\gamma(s),\dot\gamma(s))+c(H),$$
for almost every $s\in \R$. Integrating, between $a$ and $b$, with $a\leq b$, we obtain
$$\int_a^b \partial^+u(\gamma(s),\dot\gamma(s))\, ds\leq \int_a^b L(\gamma(s),\dot\gamma(s))+c(H)\, ds.$$
But, by Corollary \ref{FORMOYEN}, we have $u(\gamma(b))-u(\gamma(a))\leq \int_a^b \partial^+u(\gamma(s),\dot\gamma(s))\, ds$, and by Proposition \ref{PROPSTATIC}, we also have $u(\gamma(b))-u(\gamma(a))=\int_a^b L(\gamma(s),\dot\gamma(s))\,+c(H)d s$. It follows that the integrated equality above is an equality, therefore the inequality is an almost everywhere equality. Hence, we obtain $(\gamma(s),\dot\gamma(s))\in \leb(u)$, for almost every $s\in \R$.
Therefore $\gamma(s)$ is in the compact set $\pi(\leb(u))$ for almost every $s$. By continuity of $\gamma$, we conclude that $\gamma(t)\in \pi(\leb(u))$,
for every $t\in \R$. It follows from Theorem \ref{teo static curve} that $\A\subset \pi(\leb(u))$.
\end{proof}
\begin{definition}[Aubry set]\label{DEFAUBRY} We define the Aubry set $\tilde \A$ as 
$$\tilde \A=\bigcap_u \leb(u),$$
where the intersection is taken over all critical subsolutions $u:M\to \R$.
\end{definition}
\begin{teorema}\label{PROPAUBRY} The Aubry set $\tilde \A$ is compact non-empty, and $\pi(\tilde \A)=\A$. 
Moreover, there exists a critical subsolution $u:M\to\R$ with
$$\tilde \A=\leb(u).$$
\end{teorema}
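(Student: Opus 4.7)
The plan is to prove the four assertions by dispatching the easy parts directly and carrying out a Baire-category argument in the compact space of normalized critical subsolutions for the hard existence of $u$. Compactness of $\tilde{\A}$ is immediate: by Proposition \ref{COMPBASIQUE} every critical subsolution is $\kappa$-Lipschitz for a common $\kappa$, so the same calculation as in the proof of Lemma \ref{AubryProjSub} together with the superlinearity of $L$ confines every $\leb(u)$ to the compact set $K_0=\{(x,v)\in TM:\lVert v\rVert_x\leq A_\kappa-c(H)\}$; thus $\tilde{\A}$, as an intersection of closed subsets of $K_0$, is compact. For $\pi(\tilde{\A})\subset \A$, take the {\rm C}$^1$ strict critical subsolution $u^\star$ of Theorem \ref{teo strict subsol}: at any $x\notin \A$, Fenchel's inequality combined with $H(x,d_xu^\star)<c(H)$ gives $\partial^+u^\star(x,v)=d_xu^\star(v)<L(x,v)+c(H)$ for every $v$, so $x\notin \pi(\leb(u^\star))\supset \pi(\tilde{\A})$.

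The core step is the existence of a single $u$ with $\leb(u)=\tilde{\A}$. Fix $x_0\in M$ and let $\mathcal S\subset \CC(M,\R)$ be the set of critical subsolutions $v$ with $v(x_0)=0$; by Proposition \ref{COMPBASIQUE} and Ascoli--Arzel\`a, $\mathcal S$ is a convex, compact, hence Baire, metric space for the uniform norm. Define $f:\mathcal S\times TM\to [0,+\infty)$ by $f(u,x,v)=L(x,v)+c(H)-\partial^+u(x,v)$. The closed-graph property of $(u,x)\mapsto \partial^c u(x)$ under uniform convergence of equi-Lipschitz functions---a standard Clarke-calculus fact, see \cite{Cl}---ensures that $\partial^+u(x,v)$ is jointly upper semi-continuous in $(u,x,v)$, so $f$ is jointly lower semi-continuous; since $\leb(u)=\{f(u,\cdot,\cdot)=0\}$, the set
\[
\mathcal{O}_K:=\{u\in \mathcal S:\leb(u)\cap K=\emptyset\}
\]
is open in $\mathcal S$ for every compact $K\subset TM$.

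I would then prove that $\mathcal{O}_K$ is dense in $\mathcal S$ whenever $K\subset TM\setminus \tilde{\A}$. Given $u_0\in\mathcal S$ and $\eps>0$, for each $(x,v)\in K$ the definition of $\tilde{\A}$ provides some $u_{(x,v)}\in\mathcal S$ with $f(u_{(x,v)},x,v)>0$, and the lower semi-continuity of $f$ in $(x,v)$ gives an open neighborhood $W_{(x,v)}\subset TM$ where $f(u_{(x,v)},\cdot,\cdot)>0$. Extracting a finite subcover $\{W_{(x_i,v_i)}\}_{i=1}^N$ of $K$ and forming the convex combination $u^\dagger=N^{-1}\sum_i u_{(x_i,v_i)}\in\mathcal S$, the Clarke subadditivity $\partial^+(\sum_i\alpha_iu_i)\leq\sum_i\alpha_i\partial^+u_i$ combined with the strict inequality $\partial^+u_{(x_i,v_i)}(y,w)<L(y,w)+c(H)$ on $W_{(x_i,v_i)}$ yields $\partial^+u^\dagger(y,w)<L(y,w)+c(H)$ on $K$, hence by at least some uniform $\delta>0$ using the compactness of $K$ and lsc of $f$. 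Setting $u=(1-t)u_0+tu^\dagger$ with $t>0$ small, Clarke subadditivity again gives $\partial^+u\leq L+c(H)-t\delta$ on $K$ and $\lVert u-u_0\rVert_\infty<\eps$, so $u\in\mathcal{O}_K$.

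Finally, since $TM\setminus \tilde{\A}$, an open subset of a second countable locally compact manifold, is $\sigma$-compact, write it as $\bigcup_{n\geq 1}K_n$ with $K_n$ compact. By the Baire property of $\mathcal S$, the intersection $\bigcap_n\mathcal{O}_{K_n}$ is a dense $G_\delta$, in particular non-empty; any $u$ in it satisfies $\leb(u)\cap(TM\setminus\tilde{\A})=\emptyset$, i.e.\ $\leb(u)\subset\tilde{\A}$, and combined with the trivial $\tilde{\A}\subset\leb(u)$ we obtain $\leb(u)=\tilde{\A}$. Applying Lemma \ref{AubryProjSub} to this specific $u$ yields $\A\subset\pi(\leb(u))=\pi(\tilde{\A})$, closing $\pi(\tilde{\A})=\A$; non-emptiness of $\tilde{\A}$ then follows from $\A\neq\emptyset$ (Theorem \ref{teo strict subsol}). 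The main obstacle is the joint lower semi-continuity of $f$: it rests on the closed-graph property of Clarke's generalized gradient under uniform convergence of equi-Lipschitz functions, which is the only non-elementary ingredient of the whole argument.
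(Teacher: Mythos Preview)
Your proof is correct, but it follows a genuinely different route from the paper. The paper's argument is more direct and constructive: it uses the separability of $TM$ to reduce the intersection $\tilde{\A}=\bigcap_u\leb(u)$ to a countable one $\bigcap_{n\geq 1}\leb(u_n)$, normalizes so that $u_n(x_0)=0$, and then forms the explicit infinite convex combination $u=\sum_{n\geq 1}2^{-n}u_n$. A separate lemma establishes the subadditivity $\partial^+u\leq\sum_{n\geq 1}2^{-n}\partial^+u_n$ for such series, from which $\leb(u)\subset\bigcap_n\leb(u_n)=\tilde{\A}$ follows by the same averaging-of-inequalities trick you use for finite combinations.

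Your Baire-category argument trades the infinite-series subadditivity lemma for the joint upper semi-continuity of $(u,x,v)\mapsto\partial^+u(x,v)$ on the compact set $\mathcal{S}$ of normalized subsolutions; this is indeed standard (it follows from the closed-graph property of the Clarke gradient under uniform convergence of equi-Lipschitz functions), so the step you flagged as the main obstacle is sound. What you gain is a stronger conclusion: a dense $G_\delta$ of critical subsolutions $u$ satisfy $\leb(u)=\tilde{\A}$, not just one. What the paper gains is an explicit formula for $u$ and a slightly lighter technical dependence, since it never needs to vary $u$ in $\partial^+u$. Both proofs handle compactness of $\tilde{\A}$, the inclusion $\pi(\tilde{\A})\subset\A$, and the deduction of the reverse inclusion and non-emptiness from the existence of $u$ in essentially the same way.
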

\begin{proof} We first prove that $\pi(\tilde \A)\subset \A$. By Theorem \ref{teo strict subsol}, we can find a critical subsolution $u:M\to\R$ which is {\rm C}$^\infty$ outside of $\A$, and such that
$H(x,d_xu)< c(H)$ for every $x\notin \A$. Since $u$ is smooth on the open set $M\setminus \A$, for $(x,v)\in TM$, with $x\notin \A$, we have $\partial^+(x,v)=d_xu(v)$. By Fenchel inequality, we obtain $\partial^+(x,v)=d_xu(v)\leq L(x,v)+H(x,d_xu)$. Therefore
$\partial^+u(x,v)<L(x,v)+c(H)$, and no $(x,v)$, with $x\notin \A$, can be in $\leb(u)$. Hence $\A\supset \pi (\leb(u))\supset \pi(\tilde\A)$.

To finish the proof of the theorem, since $\tilde \A\subset \leb(u)$ for every critical subsolution $u:M\to\R$,  it suffices to find a critical solution $u:M\to\R$ such that $\leb(u)\subset \tilde\A$. We recall the following fact from general topology:

If $X$ is a separable metric space, and $(F_i)_{i\in I}$ is a family of closed sets, then we can find a sequence $i_n,n\geq 1$ with $\bigcap_{i\in I}F_i=
\bigcap_{n\geq 1}F_{i_n}$.

In fact, the open set $U=X\setminus \bigcap_{i\in I}F_i$ is covered by 
the family of open sets $U_i=X\setminus F_i,i\in I$. Since $U$ itself 
is separable metric, we can extract a countable sub cover $U_{i_n},n\geq 1$
with $U=\bigcup_{n\in \mathbb{N}} U_{i_n}$, Therefore, we have 
$X\setminus \bigcap_{i\in I}F_i=X\setminus \bigcap_{n\geq 1}F_{i_n}$, which implies
$\bigcup_{i\in I}F_i=\bigcap_{n\geq 1}F_{i_n}$.

Therefore, we can find a sequence $u_n:M\to \R, n\geq 1$, of critical subsolutions 
such that $\tilde \A=\bigcap_{n\geq 1}\leb(u_n)$. Fix $x_0\in M$. If we replace $u_n$,
by $v_n=u_n-u_n(x_0)$,  we obtain another critical subsolution $v_n$ with 
$\leb(v_n)=\leb(u_n)$. Therefore we can assume without loss of generality that
$u_n(x_0)=0$, for every $n\geq 1$. The sequence $u_n$ of critical subsolution is 
equi-Lipschitz. Call  $\kappa$ such a constant.
Since $u_n(x_0)=0$, we get $\lVert u_n\lVert_\infty\leq \kappa \diam{M}$, where $\diam{M}<+\infty$ is the diameter of the compact manifold $M$.
Not also that $ \lvert\partial^+u(x,v)\rvert \leq\kappa\lVert v\rVert_x$, for every
$ (x,v)\in TM$.
Therefore if for $A\geq 0$, we denote by $\bar B_A=\{(x,v)\mid \lVert v\rVert_x\leq A\}$, we obtain $\lVert \partial^+u_n\vert \bar B_A\rVert_\infty \leq \kappa A$.
It follows that the series $ 2^{-n}u_n,n\geq 1$, and $ 2^{-n}\partial^+u_n,n\geq 1$ converge uniformly respectively on $M$ and on compact subsets of $TM$.
If we set $u=\sum_{n\geq 1}  2^{-n}u_n$, then by convexity of $H$ in $p$ and the fact that $\sum_{n\geq 1}  2^{-n}=1$, we infer that the function $u$ is also a critical subsolution.
Moreover, we have
\begin{equation}\label{INEQUN}
\forall (x,v)\in TM, \partial^+u(x,v)\leq \sum_{n\geq 1}  2^{-n}\partial^+u_n(x,v).
\end{equation}
This is a well known fact in non-smooth analysis. For the convenience of the reader, we provide the proof below.

We have 
\begin{equation}\label{INEQUN2}
\partial^+u_n(x,v)\leq L(x,v)+c(H), \text{ for every $n\geq 1$ and every $(x,v)\in TM$.}
\end{equation}
By averaging these inequalities, and using the  inequality \eqref{INEQUN} above, we get
\begin{equation}\label{INEQUN3}
\partial^+u(x,v)\leq \sum_{n\geq 1}  2^{-n}\partial^+u_n(x,v)\leq L(x,v)+c(H),
\end{equation}
for every $(x,v)\in TM$. If $(x,v)\in \leb(u)$, then $\partial^+u(x,v)=L(x,v)+c(H)$, and therefore all inequalities in \eqref{INEQUN3} must be inequalities.
Since the right hand side inequality in \eqref{INEQUN3} was obtained by averaging the inequalities in \eqref{INEQUN2}, we must have $\partial^+u_n(x,v)=L(x,v)+c(H)$ for every $(x,v)\in \leb(u)$, and every $n\geq 1$. This implies $\leb(u)\subset\bigcap_{n\geq 1}\leb(u_n)=\tilde\A$.
\end{proof}
\begin{lemma} Suppose that for each $n\geq 1$, the function $u_n:M\to \R$  is Lipschitz, with Lipschitz constant $\kappa_n$. Assume that
$\sum_{n\geq 1}\lVert u_n\rVert_\infty<+\infty$, and $\sum_{n\geq 1}\kappa_n<+\infty$, then $u=\sum_{n\geq 1} u_n$ is a Lipschitz function on $M$,
and $\sum_{n\geq 1}\partial^+u_n$ is an upper semi-continuous function on $TM$. Moreover, for every $(x,v)\in TM$, we have 
$$\partial^+u(x,v)\leq \sum_{n\geq 1}\partial^+u_n(x,v).$$
\end{lemma}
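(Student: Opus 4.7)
The first assertion is immediate: since $\sum_{n\geq 1}\lVert u_n\rVert_\infty<+\infty$, the Weierstrass M-test yields uniform convergence of the series to a continuous function $u$; and since each partial sum $U_N=\sum_{n=1}^N u_n$ satisfies $|U_N(x)-U_N(y)|\leq \big(\sum_{n=1}^N\kappa_n\big)d(x,y)$, letting $N\to+\infty$ shows that $u$ is Lipschitz with constant $K:=\sum_{n\geq 1}\kappa_n<+\infty$.

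For the upper semi-continuity of $\sum_{n\geq 1}\partial^+u_n$, I would first observe that since $u_n$ is $\kappa_n$-Lipschitz, every $p\in \partial^cu_n(x)$ satisfies $\lVert p\rVert_x\leq \kappa_n$, which yields the pointwise bound $|\partial^+u_n(x,v)|\leq \kappa_n\lVert v\rVert_x$ on $TM$. On any compact subset of $TM$ the norm $\lVert v\rVert_x$ is uniformly bounded, so the tail $\sum_{n>N}\partial^+u_n$ converges uniformly to $0$ on such sets as $N\to+\infty$. Each finite partial sum is upper semi-continuous (being a finite sum of upper semi-continuous functions, by the remarks following the definition of $\partial^+u$), and a uniform-on-compacts limit of upper semi-continuous functions is upper semi-continuous, so the series itself is upper semi-continuous.

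The substantive part is the inequality $\partial^+u(x,v)\leq \sum_{n\geq 1}\partial^+u_n(x,v)$, and the key ingredient is the subadditivity $\partial^+(f+g)(x,v)\leq \partial^+f(x,v)+\partial^+g(x,v)$ for Lipschitz $f,g:M\to\R$. Working in a local chart around $x$, this reduces to the classical sub-additivity of the Clarke generalized directional derivative $u^\circ(x,v):=\limsup_{y\to x,\,t\downarrow 0}\big(u(y+tv)-u(y)\big)/t$, which coincides with $\partial^+u(x,v)$ and is subadditive in $u$ by subadditivity of $\limsup$. Induction then gives $\partial^+U_N(x,v)\leq \sum_{n=1}^N\partial^+u_n(x,v)$ for every $N$.

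Writing $u=U_N+R_N$ with $R_N:=\sum_{n>N}u_n$, one application of subadditivity yields
\begin{equation*}
\partial^+u(x,v)\leq \partial^+U_N(x,v)+\partial^+R_N(x,v)\leq \sum_{n=1}^N\partial^+u_n(x,v)+L_N\lVert v\rVert_x,
\end{equation*}
where $L_N:=\sum_{n>N}\kappa_n$ is a Lipschitz constant for $R_N$ (giving the bound $\partial^+R_N(x,v)\leq L_N\lVert v\rVert_x$). Since $L_N\to 0$, letting $N\to+\infty$ produces the desired inequality. The only real obstacle is thus a clean justification of the subadditivity of $\partial^+$; this can be done either through the $\limsup$ characterization in a chart, or equivalently by establishing the set-valued inclusion $\partial^c(f+g)(x)\subset \partial^cf(x)+\partial^cg(x)$ and combining it with the support-function formula for $\partial^+$.
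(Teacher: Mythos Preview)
Your argument is correct, but it proceeds differently from the paper. You establish the key inequality by invoking the subadditivity of the Clarke gauge, $\partial^+(f+g)\leq\partial^+f+\partial^+g$ (equivalently, the sum rule $\partial^c(f+g)\subset\partial^cf+\partial^cg$), then split $u=U_N+R_N$ and pass to the limit. This is clean and short, but it imports a standard result from nonsmooth analysis that the paper has not stated or proved.

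The paper instead keeps the argument self-contained using the machinery built earlier in the appendix. Setting $\theta=\sum_{n\geq1}\partial^+u_n$, it applies Corollary~\ref{FORMOYEN} to each $u_n$ along a {\rm C}$^1$ curve $\gamma:[a,b]\to M$, sums, and obtains $u(\gamma(b))-u(\gamma(a))\leq\int_a^b\theta(\gamma(t),\dot\gamma(t))\,dt$. Choosing $\gamma$ with $\gamma(0)=x$, $\dot\gamma(0)=v$, dividing by $\epsilon$, and sending $\epsilon\to0$ gives $d_xu(v)\leq\theta(x,v)$ at every differentiability point of $u$, by upper semi-continuity of $\theta$; this then propagates to $\partial^*u(x)$ and hence to $\partial^cu(x)$, yielding $\partial^+u(x,v)\leq\theta(x,v)$. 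Your route is more direct for a reader familiar with Clarke's calculus; the paper's route avoids quoting the sum rule and instead derives the inequality from the integral estimate already in hand.
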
 
\begin{proof} It is obvious that the series $\sum_{n\geq 1} u_n$ converges uniformly to a function $u$ that is Lipschitz with Lipschitz constant $\sum_{n\geq 1}\kappa_n<+\infty$. Moreover since $\partial^+u_n(x,v)\leq \kappa_n\lVert v\rVert_x$, the series  $\sum_{n\geq 1}\partial^+u_n$  converges uniformly on any compact subset of $TM$. Using this last fact and the upper semi-continuity of each function $\partial^+u_n$  we obtain that the sum $\theta=\sum_{n\geq 1}\partial^+u_n$ is also upper semi-continuous.
If we consider a {\rm C}$^1$ path $\gamma:[a,b]\to M$, since its speed $\dot\gamma(t)$ is bounded, we concluded that the series
$\sum_{n\geq 1}\partial^+u_n(\gamma(t),\dot\gamma(t))$ converges uniformly to $\theta(\gamma(t),\dot\gamma(t))$. In particular, we obtain
$$\int_a^b \theta(\gamma(t),\dot\gamma(t))\,dt=\sum_{n\geq 1}\int_a^b \partial^+u_n(\gamma(t),\dot\gamma(t))\,dt.$$
By Corollary \ref{FORMOYEN}, we have 
$$u_n(\gamma(b))-u_n(\gamma(a))\leq \int_a^b \partial^+u_n(\gamma(t),\dot\gamma(t))\,dt.$$
Since $u=\sum_{n\geq 1} u_n$, we obtain
$$u(\gamma(b))-u(\gamma(a))\leq \int_a^b \theta(\gamma(t),\dot\gamma(t))\,dt.$$
Assume now $d_xu$ exists,, and $v\in T_xM$. Pick a {\rm C}$^1$ path $\gamma:[0,1]\to M$, with $\gamma(0)=x , \dot\gamma(0)=v$. For very $\epsilon>0$,
we have
$$u(\gamma(\epsilon))-u(\gamma(0))\leq \int_0^\epsilon \theta(\gamma(t),\dot\gamma(t))\,dt.$$
If we divide this inequality by $\epsilon$, and let $\epsilon\to 0$, the limit of the right hand side is $d_xu(v)$, and the $\limsup$ of the left hand side
is $\leq \theta(x,v)$, since $t\mapsto \theta(\gamma(t),\dot\gamma(t))$ is upper semi-continuous. Therefore, we obtained
$d_xu(v)\leq \theta(x,v)$ at every differentiability point $x$ of $u$ and every $v\in T_xM$. From this inequality, using that $\theta$ is upper semi-continuous, we get $p(v)\leq \theta(x,v)$ for every $(x,v)\in TM$, and every $p\in \partial^*u(x)$. From which it follows easily that 
$ \partial^+u(x,v)\leq \theta (x,v)$ on $TM$.
\end{proof}
\subsection{Mather measures and Mather set}\label{sez Mather theory}

In this work, we will deal with probability measures defined either on the compact manifold $M$ or on its tangent bundle $TM$. A measure on $TM$ will be denoted by $\tilde \mu$, where the tilde on the top is to keep track of the fact that the measure is on the space $TM$. If $\tilde\mu$ is a probability measure on $TM$, we will denote by $\mu$ its projection $\pi_\#\tilde\mu$ on $M$, i.e. the probability measure on $M$ defined as  
\[
 \pi_\#\tilde\mu(B):=\tilde\mu(\pi^{-1}(B))\qquad\hbox{for every $B\in\Bor(M)$}.
\]
Note that 
\[
 \int_M f(x)\,\pi_\#\tilde\mu(x) = \int_{TM} \left( f\comp\pi \right)(x,v)\,d \tilde\mu(x,v)\qquad\hbox{for every $f\in\CC(M,\R)$.} 
\]
For a Borel measure $\mu$ on a metric separable space $X$, there is a largest open subset $U\subset X$ with $\mu (U)=0$. The complement
$X\setminus U$ is called the support of $\mu$ and is denoted by $\supp(\mu)$. This set $\supp(\mu)$ is the smallest closed subset of full $\mu$\/-measure in $X$.
In this section we generalize  Mather theory to the non-smooth case, that is when the Hamiltonian is a continuous function satisfying (H1)--(H2$'$). 
We will assume that the connected manifold $M$ is endowed with an auxiliary Riemannian metric, and we will denote by $d$ the associated Riemannian distance.

The first step consists in showing that the constant $-c(H)$, where $c(H)$
 is the critical value, can be also obtained by minimizing the integral of the 
Lagrangian over $TM$ with respect to a suitable family of probability measure 
on $TM$. In the case of a Tonelli Hamiltonian, it is customary to choose 
this family as the one made up by probability measures on $TM$ that are invariant 
by the Euler--Lagrange flow, see \cite{prova3}. This approach is not feasible here 
due to the lack of regularity of $H$. It was shown that this minimization problem 
yields the same result if it is done on the set of closed measures
\cite{Bangert, Mane, FSC1,FS05}. This is a set that does not depend on the Hamiltonian,
and therefore this is the approach that can be adapted in a more general setting. 
The definition of closed measure is the following:
 
\begin{definition}\label{def closed measure}
A probability measure $\tilde{\mu}\in\mathscr{P}(TM)$ will be called {\em closed} if it satisfies the following properties:\medskip
\begin{itemize}
\item[\rm (a)] \quad $\displaystyle{\int_{TM} \|v\|_x\,d \tilde{\mu}(x,v)<+\infty}$;\medskip
\item[\rm (b)] \quad$\displaystyle{\int_{TM} d_x\varphi(v)\, d \tilde{\mu} (x,v) =0}$,  for every $\varphi\in \CC^1(M)$.\medskip
\end{itemize}
We will denote by $\tilde\Mis$ the set of closed probability measures on $TM$. 
\end{definition}
The following lemma is a particular case of Corollary \ref{corVar2}.
\begin{lemma}\label{AVANTBLABLA} Let $\tilde\mu$ be a closed measure on the tangent bundle $TM$ of the compact manifold $M$.
Then for every Lipschitz function $u:M \to \R$, we have
$$\int_{TM}\partial^+u(x,v)\,d\tilde \mu(x,v)\geq 0.$$
\end{lemma}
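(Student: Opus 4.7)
The statement is essentially the specialization of Corollary \ref{corVar2} to the compact setting $N=M$, so my plan is to invoke the smooth approximation machinery already built up.

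First I would fix a continuous function $\rho: M\to \R$ with $\lVert p\rVert_x<\rho(x)$ for every $(x,p)\in \Gr{\partial^c u}$ (such a $\rho$ exists by the lemma stating that $\mathbb{H}(u)$ is the point-wise infimum of larger continuous functions, applied to $H(x,p) = \lVert p\rVert_x$, using that $\partial^c u$ has bounded graph over the compact manifold $M$). Then by Corollary \ref{CorVariante}, there is a sequence $u_n\colon M\to \R$ of $C^\infty$ functions, converging uniformly to $u$, such that $\lVert d_x u_n\rVert_x<\rho(x)$ for every $x$ and every $n$, and
\[
\limsup_{n\to+\infty} d_x u_n(v)\leq \partial^+ u(x,v) \quad\text{for every }(x,v)\in TM.
\]

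Next, because each $u_n$ is of class $C^1$ and $\tilde\mu$ is closed, property (b) in Definition \ref{def closed measure} gives
\[
\int_{TM} d_x u_n(v)\, d\tilde\mu(x,v)=0 \quad \text{for every } n.
\]
Passing to $\limsup$ in $n$, I would apply Corollary \ref{corVar2} with $N=M$ and $K=M$ (noting that the integrability hypothesis there, $\int_{TM}\lVert v\rVert_x\,d\tilde\mu<+\infty$, is exactly property (a) of closed measures, which together with the uniform bound $\lVert d_x u_n\rVert_x<\rho(x)\leq \max_M \rho$ supplies the dominating function needed for the reverse Fatou lemma). This yields
\[
0=\limsup_{n\to+\infty}\int_{TM} d_x u_n(v)\, d\tilde\mu(x,v)\leq \int_{TM}\partial^+ u(x,v)\, d\tilde\mu(x,v),
\]
which is the claim.

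There is no real obstacle here, since all the work is packaged into Corollaries \ref{CorVariante} and \ref{corVar2}. The only thing to be careful about is verifying that the hypotheses of Corollary \ref{corVar2} apply: compactness of $M$ ensures $\pi(\Gr{\partial^c u})=M$ is compact and $\rho$ can be chosen bounded, and hypothesis (a) on $\tilde\mu$ provides the integrable majorant $(x,v)\mapsto (\max_M\rho)\lVert v\rVert_x$ for both the sequence $d u_n$ and its limsup $\partial^+u$.
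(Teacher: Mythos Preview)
Your proposal is correct and is exactly the approach the paper takes: the paper's proof consists solely of the remark that the lemma is a particular case of Corollary \ref{corVar2}, and you have spelled out precisely why---property (a) of closed measures gives the integrability hypothesis with $K=M$, property (b) makes each $\int_{TM} d_x u_n(v)\,d\tilde\mu$ vanish, and the corollary's inequality then yields the claim.
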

Let us recall  that a first way to construct closed  measures. If $\gamma: [a,b]\to M$ is
an absolutely continuous curve, we define the probability measure $\tilde \mu_\gamma$ on $TM$, by
$$\int _{TM}\psi \,d\tilde \mu_\gamma=\frac1{b-a}\int_a^b \psi(\gamma(t),\dot\gamma(t))\,dt,$$
for every bounded Borel measurable function $\psi:TM \to \R$. This measure is nothing but the image of the normalized Lebesgue $(b-a)^{-1}dt$ on the interval $[a,b]$ by the speed curve map $t\mapsto (\gamma(t),\dot\gamma(t))$ defined almost everywhere on $[a,b]$. Note that
\begin{align*}
\int_{TM}\lVert v\rVert_x\,d\tilde \mu_\gamma(x,v)&=\frac1{b-a}\int_a^b\lVert \dot\gamma(t)\rVert_{\gamma(t)}\,dt\\&
=\frac{\ell_g(\gamma)}{b-a}<+\infty,
\end{align*}
where $\ell_g(\gamma)$ is the Riemannian length of $\gamma$, which is finite because $\gamma$ is absolutely continuous on the compact interval $[a,b]$.
Note also that for $f:M\to \R$ of class {\rm C}$^1$, we have
$$\int_{TM}d_xf(v)\,d\tilde \mu_\gamma(x,v)=\frac1{b-a}\int_a^b d_{\gamma(t)}f(\dot\gamma(t))\,dt=\frac{f(\gamma(b)-f(\gamma(a))}{b-a}.$$
In particular, if $\gamma$ is a loop, then $\tilde\mu_\gamma$ is closed.
We can also use unbounded curves to define closed measures. We sum up this fact in the following lemma.
\begin{lemma}\label{ConstructionClosed} Let $\gamma: [0,+\infty[\to M$ be a (globally) Lipschitz curve.  For $t>0$, define the measure 
$\tilde \mu_t=\tilde\mu_{\gamma\vert [0,t]}$. These probability measures on $TM$ have all support in the compact closure $\overline{\{(\gamma(s),\dot\gamma(s))\mid s\in S\}}$, where $S$ is a subset of full measure in $\R$ on which $\dot\gamma$ is defined. In particular, the set of measures $\tilde \mu_t,t>0$
is compact in $\tilde\Mis$ for the weak topology. Therefore, we can find accumulation points for $\tilde \mu_t$, as $t\to \infty$. Any such accumulation point is 
a closed measure. 
\end{lemma}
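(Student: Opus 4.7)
The plan is to verify the two compactness and the two structural claims in order, with the heart of the argument being a one‑line computation for condition (b) of Definition \ref{def closed measure}.

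First I would fix a Lipschitz constant $\kappa$ for $\gamma$, so that for a.e.\ $s\in\R_{\geq 0}$ we have $\lVert\dot\gamma(s)\rVert_{\gamma(s)}\leq\kappa$. Setting $\bar B_\kappa=\{(x,v)\in TM\mid \lVert v\rVert_x\leq\kappa\}$, the set $\bar B_\kappa$ is compact since $M$ is compact, and by construction $\{(\gamma(s),\dot\gamma(s))\mid s\in S\}\subset\bar B_\kappa$, so its closure is a compact subset of $TM$. Because $\tilde\mu_t$ is the pushforward of the normalized Lebesgue measure on $[0,t]$ under $s\mapsto(\gamma(s),\dot\gamma(s))$, it is a Borel probability measure with support in this common compact set. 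In particular $\int\lVert v\rVert_x\,d\tilde\mu_t\leq\kappa<+\infty$.

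Second, since every $\tilde\mu_t$ sits inside the fixed compact set $\bar B_\kappa$, the family $\{\tilde\mu_t\}_{t>0}$ is tight, hence relatively compact in the weak topology of probability measures by Prokhorov's theorem (equivalently, Banach--Alaoglu on the dual of $\D{C}_c(TM)$). So given any sequence $t_n\to\infty$ we may extract a subsequence $\tilde\mu_{t_{n_k}}\weakcv\tilde\mu$. The limit $\tilde\mu$ is a probability measure whose support is also contained in $\bar B_\kappa$ (weak limits of measures supported in a fixed closed set remain supported there), so condition (a) of Definition \ref{def closed measure} holds for $\tilde\mu$.

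Third, for (b) I would pick any $\varphi\in\D{C}^1(M)$ and use the Riemannian fundamental theorem of calculus along $\gamma$ (valid since $\gamma$ is Lipschitz, in particular absolutely continuous on each $[0,t]$) to get
\[
\int_{TM} d_x\varphi(v)\,d\tilde\mu_t(x,v)=\frac{1}{t}\int_0^t d_{\gamma(s)}\varphi(\dot\gamma(s))\,ds=\frac{\varphi(\gamma(t))-\varphi(\gamma(0))}{t}.
\]
Since $M$ is compact, $\varphi$ is bounded, so the right side is $O(1/t)$ and tends to $0$ as $t\to\infty$.

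Finally I would pass to the limit. The test function $(x,v)\mapsto d_x\varphi(v)$ is continuous on $TM$ and, being continuous on the compact set $\bar B_\kappa$, is bounded there. To make the weak convergence argument rigorous I would multiply by a compactly supported cut‑off $\chi\in\D{C}_c(TM)$ equal to $1$ on a neighborhood of $\bar B_\kappa$; then $\chi\cdot d\varphi\in\D{C}_c(TM)$, and both $\int d_x\varphi(v)\,d\tilde\mu_{t_{n_k}}$ and $\int d_x\varphi(v)\,d\tilde\mu$ coincide with the corresponding integrals of $\chi\cdot d\varphi$ because all measures involved are supported in $\bar B_\kappa$. Weak convergence against $\chi\cdot d\varphi$ then gives $\int d_x\varphi(v)\,d\tilde\mu=\lim_k \int d_x\varphi(v)\,d\tilde\mu_{t_{n_k}}=0$, so $\tilde\mu$ is closed. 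The only subtle point is this cut‑off step; everything else is just bookkeeping, and I do not anticipate any serious obstacle.
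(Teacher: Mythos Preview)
Your proof is correct and follows essentially the same route as the paper: common compact support in a ball $\bar B_\kappa$, hence weak compactness, then the computation $\int d_x\varphi(v)\,d\tilde\mu_t=(\varphi(\gamma(t))-\varphi(\gamma(0)))/t\to 0$ and passage to the limit. Your cut-off step to justify testing weak convergence against the non-compactly-supported function $(x,v)\mapsto d_x\varphi(v)$ is a detail the paper glosses over, but the argument is otherwise identical.
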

\begin{proof} Call $\tilde\mu$ such an accumulation point, and suppose that 
$\tilde \mu_{t_i}\to \tilde\mu$ in the weak topology, with $t_i\to +\infty$.
Note that support of $\tilde \mu$ is also contained $\overline{\{(\gamma(s),\dot\gamma(s))
\mid s\in S\}}$. It is therefore compact, and any continuous function 
on $TM$ is $\tilde \mu$-integrable. In particular condition (a) in the Definition 
\ref{def closed measure} is satisfied. If $f:M\to\R$ is {\rm C}$^1$, we have
\begin{align*}
\int_{TM} d_xf(v)\, d \tilde{\mu} (x,v)&=\lim_{i\to+\infty}\int_{TM} d_xf(v)\, d \tilde\mu_{t_i} (x,v)\\
&=\frac{f(\gamma(t_i))-f(\gamma(0))}{t_i}\\
&=0.
\end{align*}
The last equality follows from the facts that the continuous function  $f$  is bounded on the compact set $M$\/, and that $t_i\to+\infty$.
\end{proof}
We now clarify the relation between the critical value and minimizing measure.
\begin{prop}\label{BLABLA} For every closed measure $\tilde\mu$ on $TM$ , we have
$$\int_{TM}L(x,v)\, d\tilde\mu(x,v)\geq -c(H),$$
where $c(H)$ is the critical value of $L$.
\end{prop}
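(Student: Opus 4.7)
The strategy is to pair a closed measure $\tilde\mu$ against an arbitrary critical subsolution and extract the desired inequality from Fenchel. Since $c(H)$ is attained in its defining infimum (by equi-Lipschitz bounds and the stability of viscosity subsolutions under uniform limits), we may fix some critical subsolution $u\colon M\to\R$, which is automatically Lipschitz by Proposition \ref{COMPBASIQUE}. The key ingredients already in place are: (i) Proposition \ref{CARACSUBSOL}, which rewrites the subsolution property in Clarke-calculus form as $\partial^+u(x,v)\leq L(x,v)+c(H)$ for every $(x,v)\in TM$; and (ii) Lemma \ref{AVANTBLABLA}, which asserts that $\int_{TM}\partial^+u(x,v)\,d\tilde\mu(x,v)\geq 0$ for every closed measure $\tilde\mu$ and every Lipschitz $u$.

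Granted these, the proof is essentially a one-line integration. First one checks that both integrals appearing below are meaningful: since $u$ is $\kappa$-Lipschitz for some $\kappa$, one has $|\partial^+u(x,v)|\leq \kappa\|v\|_x$, which is $\tilde\mu$-integrable by condition (a) of Definition \ref{def closed measure}; since $L$ is bounded below on $TM$, the integral $\int L\,d\tilde\mu$ is well-defined in $\R\cup\{+\infty\}$. Integrating the Clarke-Fenchel inequality from Proposition \ref{CARACSUBSOL} against $\tilde\mu$ yields
\[
\int_{TM}\partial^+u(x,v)\,d\tilde\mu(x,v)\;\leq\;\int_{TM}L(x,v)\,d\tilde\mu(x,v)+c(H),
\]
and applying Lemma \ref{AVANTBLABLA} to the left-hand side immediately gives the desired bound
\[
\int_{TM}L(x,v)\,d\tilde\mu(x,v)\;\geq\;-c(H).
\]

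There is no real obstacle to this approach; the substantive content has been pushed into Proposition \ref{CARACSUBSOL} and Lemma \ref{AVANTBLABLA}, which together encapsulate the non-smooth replacement for the classical smooth argument (where one would take $u$ of class $C^1$ and use Fenchel pointwise together with closedness $\int d_xu(v)\,d\tilde\mu=0$). The only mild subtlety worth noting explicitly is that one should not try to impose two-sided $\tilde\mu$-integrability of $L$ at the outset: as above, one just observes that $L$ is bounded below, so the right-hand integral is an element of $\R\cup\{+\infty\}$, and the inequality holds in that extended sense. If it equals $+\infty$ there is nothing to prove, and otherwise everything is finite and the chain of inequalities above is rigorous.
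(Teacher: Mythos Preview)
Your proof is correct and follows essentially the same approach as the paper: fix a critical subsolution, use the Clarke-Fenchel inequality $\partial^+u(x,v)\leq L(x,v)+c(H)$, integrate against $\tilde\mu$, and apply Lemma \ref{AVANTBLABLA}. The only cosmetic difference is that the paper re-derives the inequality $\partial^+u\leq L+c(H)$ inline from Fenchel rather than citing Proposition \ref{CARACSUBSOL}, and your added remarks on integrability make explicit what the paper leaves implicit.
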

\begin{proof} Let us consider $u: M\to\R$ a critical subsolution for $H$. We have $H(x,p)\leq c(H)$ for every $p\in \partial^cu(x)$.
Therefore, for every $(x,v)\in TM$, and every $p\in \partial^cu(x)$, by Fenchel's inequality, we have
$$ p(v)\leq L(x,v)+H(x,p)\leq L(x,v)+c(H).$$
Taking the sup over all $p\in \partial^cu(x)$ yields
$$\forall (x,v)\in TM, \partial^+u(x,v)\leq L(x,v)+c(H).$$
Since $\int_{TM}\partial^+u(x,v)\,d\tilde \mu(x,v)\geq 0$, by Lemma \ref{AVANTBLABLA}, we obtain $\int_{TM}L(x,v)\, d\tilde\mu(x,v)+c(H)\geq 0$.
\end{proof}
The relation linking closed probability measures to the critical value is clarified by the next theorem. 
\begin{teorema}\label{teo existence minimizing measures}
The following holds:
\begin{equation}\label{problem minimizing L}
 \inf_{\tilde{\mu}\in\tilde\Mis} \int_{TM} L(x,v)\, d \tilde{\mu}(x,v)=-c(H)
\end{equation}
where $c(H)$ is the critical value for $H$. Moreover the $\inf$ is achieved by a closed measure. More precisely, for every $y\in\A$ there exists a minimizing measure $\tilde{\mu}\in\tilde\Mis$ such that $\pi_\#\tilde{\mu}$ is supported in the Mather class of $y$. 
\end{teorema}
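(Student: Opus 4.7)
The lower bound $\int_{TM}L\,d\tilde\mu \geq -c(H)$ for every closed measure $\tilde\mu$ is exactly the content of Proposition \ref{BLABLA}. So the plan is to construct, for any prescribed $y\in\A$, a single closed measure $\tilde\mu$ attaining this lower bound and projecting onto a measure supported in the Mather class of $y$. Once this is done, both the equality \eqref{problem minimizing L} and the ``moreover'' clause follow.

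The construction uses a static curve. By Theorem \ref{teo static curve}, there exists an absolutely continuous curve $\gamma:\R\to M$ with $\gamma(0)=y$ that is calibrated by $h$ (property (i)) and stays inside the Mather class of $y$ (property (ii)). By Proposition \ref{PROPSTATIC}, $\gamma$ is $\alpha_0$-Lipschitz, so Lemma \ref{ConstructionClosed} applies to the restrictions $\gamma|_{[0,t]}$: the associated probability measures $\tilde\mu_t=\tilde\mu_{\gamma|_{[0,t]}}$ are all supported in the common compact set $K=\{(x,v)\in TM:\|v\|_x\leq \alpha_0\}$, so we may extract a sequence $t_i\to +\infty$ along which $\tilde\mu_{t_i}$ converges weakly to a closed probability measure $\tilde\mu$, also supported in $K$.

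Next I would compute $\int_{TM}L\,d\tilde\mu$. Since $L$ is continuous and $K$ is compact, $L$ is bounded on $K$, so weak convergence gives
\[
\int_{TM}L\,d\tilde\mu
=\lim_{i\to\infty}\int_{TM}L\,d\tilde\mu_{t_i}
=\lim_{i\to\infty}\frac{1}{t_i}\int_0^{t_i}L(\gamma(s),\dot\gamma(s))\,ds.
\]
By property (i) of the static curve, $\int_0^{t}[L(\gamma,\dot\gamma)+c(H)]\,ds = h(\gamma(0),\gamma(t))$, hence
\[
\frac{1}{t_i}\int_0^{t_i}L(\gamma(s),\dot\gamma(s))\,ds
=\frac{h(y,\gamma(t_i))}{t_i}-c(H).
\]
Since $h$ is continuous on the compact set $M\times M$ (Proposition \ref{prop h}(a)), it is bounded, so the first term vanishes in the limit and $\int_{TM}L\,d\tilde\mu=-c(H)$. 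Combined with Proposition \ref{BLABLA}, this proves \eqref{problem minimizing L} and shows that the infimum is attained by $\tilde\mu$.

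It remains to verify that $\mu=\pi_\#\tilde\mu$ is supported in the Mather class of $y$. The support of $\tilde\mu_{t_i}$ is contained in $\{(\gamma(s),\dot\gamma(s))\mid s\in[0,t_i]\cap S\}$, so after projecting and passing to the weak limit, $\supp(\mu)$ is contained in the closure of $\gamma(\R)$. By property (ii) of Theorem \ref{teo static curve}, $\gamma(\R)$ lies inside the Mather class of $y$, which is the set $\{x\in\A\mid h(y,x)+h(x,y)=0\}$. By continuity of $h$ and closedness of $\A$, this set is closed, so $\supp(\mu)$ is contained in it. The only mild subtlety is ensuring $L$ passes cleanly through the weak limit despite being only superlinear, but this is handled by the uniform compactness of the supports in $K$, so I expect no real obstacle beyond bookkeeping.
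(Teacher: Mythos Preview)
Your argument is correct and follows essentially the same route as the paper's own proof: pick a static curve through $y$, use its Lipschitz bound to extract a weak limit of the averaged occupation measures via Lemma \ref{ConstructionClosed}, compute $\int L\,d\tilde\mu$ from the calibration identity and the boundedness of $h$, and read off the support of $\pi_\#\tilde\mu$ from the fact that the static curve stays in the (closed) Mather class of $y$. If anything, you are slightly more explicit than the paper in tracking the $-c(H)$ term and in justifying why $L$ passes through the weak limit.
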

\begin{proof}
Fix $y\in\A$. By Theorem \ref{teo static curve}, there exists a static curve $\gamma:\R\to M$, with $\gamma(0)=y$. Since this curve $\gamma$ is Lipschitz, by Lemma \ref{ConstructionClosed}, we can find $t_i\to +\infty$ such that $\tilde\mu_{t_i}=\tilde\mu_{\gamma\vert [0,t_i]}$  converges weakly to a closed probability measure $\tilde \mu$, and have all their support contained in the compact subset $\overline{\{(\gamma(s),\dot\gamma(s))\mid s\in S\}}$, where $S$ is a subset of full measure in $\R$ on which $\dot\gamma$ is defined. Therefore, the projection $\pi_\#\tilde{\mu}$ is supported in the closure 
$\overline{\{\gamma(s)\mid s\in \R\}}$. Since $\gamma$ is static with $\gamma(0)=y$, the entire curve $\gamma$ is contained in the Mather class of $y$, which is closed. This proves the last claim of the theorem. 
It remains to show that $\int_{TM}L\,d\tilde \mu$ is equal to $-c(H)$. Since $L$ is continuous and the supports of both $\tilde \mu$ and the $\tilde\mu_{t_i}$
are contained in the same compact subset of $TM$, we have
\begin{align*}
\int_{TM} L(x,v)\, d \tilde{\mu} (x,v)
&=
\lim_{i\to +\infty} \int_{TM} L(x,v)\, d \tilde{\mu}_{t_i} (x,v)\\
&=
\lim_{i\to +\infty} \frac{1}{t_i} \int_0^{n} L(\gamma(s),\dot\gamma(s))\, d s
=
\lim_{i\to +\infty} \frac{h(\gamma(0),\gamma(t_i))}{t_i}
=0,
\end{align*}
where in the last two equalities, we have used that $\gamma$ is a static curve
and that $h$ is bounded on $M\times M$.
\end{proof}
\begin{definition}\label{defMather}  A Mather measure  for the Lagrangian $L$ is 
 a closed measure $\tilde{\mu}\in\tilde\Mis$ such that
$\int_{TM} L(x,v)\,d \tilde{\mu}(x,v)=-c(H)$. The set of Mather measures will be denoted by 
$\tilde\Mis_0(L)$.

A projected Mather measure is a Borel probability measure in $\mu$ on $M$ of the form
$\mu=\pi_\#\tilde\mu$, where $\tilde\mu\in \tilde\Mis_0(L)$. The set of projected Mather measures is denoted by $\Mis_0(L)$.
\end{definition}

Some time  the terminology  Mather minimizing measure, rather than 
Mather measure, is used to emphasize that a Mather measure is solving the miminization 
problem \eqref{problem minimizing L}.

We now show that the support of a Mather measure is always contained
in the Aubry set $\tilde\A$. In particular, the support of a  Mather measure is compact,
and any projected 
Mather measure has its support contained in $\A$.
\begin{teorema}\label{teo support minimizing measure} For every 
Mather measure $\tilde\mu$, we have 
$\supp\tilde\mu\subset \tilde \A$. Therefore $\tilde \mu$ has compact support,
and the support of the projected Mather measure $\pi_\#\tilde\mu$
is contained in $\A$. It follows that the set of Mather measures, and the set of
projected Mather measures are both convex and compact in the weak topology.
\end{teorema}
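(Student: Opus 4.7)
The plan is to prove the key inclusion $\supp\tilde\mu\subset\tilde\A$ first, since all the other claims (compact support, projected support in $\A$, convexity, weak compactness of both $\tilde\Mis_0(L)$ and $\Mis_0(L)$) will then follow by soft arguments. Since $\tilde\A=\bigcap_u\leb(u)$ by Definition \ref{DEFAUBRY}, with the intersection taken over all critical subsolutions, it is enough to show that $\supp\tilde\mu\subset\leb(u)$ for every critical subsolution $u\colon M\to\R$.

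Fix such a $u$ and consider the function
\[
f(x,v):=L(x,v)+c(H)-\partial^+u(x,v)
\]
on $TM$. Proposition \ref{CARACSUBSOL} yields $f\geq 0$ everywhere; and since $L$ is continuous while $\partial^+u$ is upper semicontinuous (see \S\ref{ClarkeDer}), the function $f$ is lower semicontinuous. I will then integrate $f$ against $\tilde\mu$: using the Mather identity $\int L\,d\tilde\mu=-c(H)$ and Lemma \ref{AVANTBLABLA} applied to the Lipschitz function $u$ and the closed measure $\tilde\mu$ (which has finite speed integral by condition (a) of Definition \ref{def closed measure}), I obtain
\[
0\leq\int_{TM}f\,d\tilde\mu=-\int_{TM}\partial^+u\,d\tilde\mu\leq 0.
\]
Hence $\int f\,d\tilde\mu=0$. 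Lower semicontinuity makes $\{f>0\}$ open, so that open set carries zero $\tilde\mu$-mass and $\supp\tilde\mu\subset\{f=0\}=\leb(u)$. Intersecting over all critical subsolutions delivers $\supp\tilde\mu\subset\tilde\A$.

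Compactness of $\supp\tilde\mu$ is then immediate since $\tilde\A$ is compact by Theorem \ref{PROPAUBRY}; and $\supp(\pi_\#\tilde\mu)\subset\pi(\supp\tilde\mu)\subset\pi(\tilde\A)=\A$ by the same theorem. Convexity of $\tilde\Mis_0(L)$ is clear since the closedness conditions of Definition \ref{def closed measure} and the minimizing identity $\int L\,d\tilde\mu=-c(H)$ are all affine in $\tilde\mu$. For weak compactness, every Mather measure is supported in the common compact set $\tilde\A$, so the family is tight and relatively weakly compact by Prokhorov; a weak limit $\tilde\mu$ of Mather measures is still supported in the closed set $\tilde\A$, and both the test-function identity $\int d_x\varphi(v)\,d\tilde\mu=0$ (for $\varphi\in\D{C}^1(M)$) and the identity $\int L\,d\tilde\mu=-c(H)$ pass to the limit because $d_x\varphi(v)$ and $L$ are continuous and bounded on $\tilde\A$. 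Finally, the push-forward $\tilde\mu\mapsto\pi_\#\tilde\mu$ is weakly continuous on probability measures with supports in a common compact subset, so convexity and weak compactness transfer to $\Mis_0(L)$.

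The one delicate point is controlling $\supp\tilde\mu$ rather than merely a $\tilde\mu$-a.e.\ property: what makes the argument work is that $f$ is not only non-negative with zero integral but also lower semicontinuous, so that the a.e.\ vanishing of $f$ forces vanishing throughout the topological support. This, in turn, rests entirely on the upper semicontinuity of $\partial^+u$ established in the Clarke-calculus appendix; every other step in the proof is routine tightness, continuity, or affinity.
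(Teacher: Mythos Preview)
Your proof is correct and follows essentially the same route as the paper: you integrate the pointwise inequality $\partial^+u\leq L+c(H)$ from Proposition \ref{CARACSUBSOL} against the Mather measure, use Lemma \ref{AVANTBLABLA} to force equality $\tilde\mu$-a.e., and then pass from a.e.\ equality to support inclusion via the closedness of $\leb(u)$ (which you phrase equivalently as lower semicontinuity of $f$). Your treatment of convexity and weak compactness is slightly more explicit than the paper's (invoking Prokhorov and continuity of push-forward), but the substance is identical.
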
 
\begin{proof} By the definition of the Aubry set \ref{DEFAUBRY}, we have to show 
that $\supp\tilde\mu\subset \leb(u)$, for every critical subsolution $u:M\to\R$.
By Proposition \ref{CARACSUBSOL}, for every $(x,v)\in TM$, we have
\begin{equation}\label{USEFULINE}
\partial^+u(x,v)\leq L(x,v)+c(H),
\end{equation}
where $c(H)$ is the critical value for $H$. If we integrate this inequality
for a Mather measure, we get
$$\int_{TM} \partial^+u\,d\tilde\mu\leq\int_{TM} L+c(H)\,d\tilde\mu=0.$$
But the left hand side is $\geq 0$, since $\tilde\mu$ is closed, see 
Lemma \ref{AVANTBLABLA}. Therefore the inequality \eqref{USEFULINE} is an equality
$\tilde\mu$-almost everywhere. In other words, the set $\leb(u)$ is of full
$\tilde\mu$-measure, but as we have shown that this set $\leb(u)$ is closed, we get 
$\supp\tilde\mu\subset \leb(u)$, as was required.

Now that we know that all minimizing measure have support in the compact set 
$\tilde\A$, we can characterize the set of minimizing measures as the set of
probability measures $\tilde\mu$ on $\tilde \A$ such that 
$\int_{\tilde \A}L(x,v)\,d\tilde\mu=-c(H)$, and $\int_{\tilde \A}d_xf(v)\,d\tilde\mu=0$,
for every {\rm C}$^1$ function $f:M\to\R$. Each one of these constraints is a closed
and convex condition on $\tilde\mu$. This finishes the proof.
\end{proof}

We end this section by extending to the current setting the notion of Mather set and by proving that it is a uniqueness set for the critical equation. 

\begin{definition}
The {projected Mather set} is the subset of $M$ defined as 
\[
 \M:=\overline{\bigcup_{\tilde{\mu}\in\tilde\Mis_0(L)} \supp(\pi_\#\tilde{\mu})}.
\]
\end{definition}

The following holds:

\begin{teorema}
The projected Mather set $\M$ is a closed subset of the projected Aubry set $\A$. Moreover, it is a uniqueness set for the critical equation \eqref{eq critical}, i.e. two critical solutions that coincide on $\M$ coincide on the whole manifold $M$.
\end{teorema}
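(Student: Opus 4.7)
The first assertion that $\M\subset\A$ should drop out almost immediately from the structural results already in place. By Theorem \ref{teo support minimizing measure}, every Mather measure $\tilde\mu\in\tilde\Mis_0(L)$ satisfies $\supp\tilde\mu\subset\tilde\A$, and hence $\supp(\pi_\#\tilde\mu)\subset\pi(\tilde\A)=\A$ by Theorem \ref{PROPAUBRY}. Taking the union over all $\tilde\mu\in\tilde\Mis_0(L)$ and then the closure, and using that $\A$ is closed, yields $\M\subset\A$. Closedness of $\M$ is built into the definition.

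For the uniqueness-set part, the natural route is to reduce to Theorem \ref{teo uniqueness set}. Let $u_1,u_2$ be critical solutions with $u_1=u_2$ on $\M$. Since Theorem \ref{teo uniqueness set} asserts that $\A$ is a uniqueness set, it is enough to show that $u_1=u_2$ on $\A$. Here I would bring in Proposition \ref{prop quotient subsol}: because $u_1,u_2$ are critical subsolutions, the difference $u_1-u_2$ is constant on every Mather class.

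The bridge between these two ingredients is Theorem \ref{teo existence minimizing measures}: given $y\in\A$, there exists a Mather measure $\tilde\mu$ whose projection $\pi_\#\tilde\mu$ is supported inside the Mather class of $y$. Pick any $z$ in the (nonempty) support of $\pi_\#\tilde\mu$; by construction $z\in\M$, so $u_1(z)=u_2(z)$, while $z$ and $y$ lie in a common Mather class, so Proposition \ref{prop quotient subsol} gives $(u_1-u_2)(y)=(u_1-u_2)(z)=0$. This shows $u_1=u_2$ on $\A$, and Theorem \ref{teo uniqueness set} then propagates the equality to all of $M$.

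I do not foresee a real obstacle: the proof is essentially the assembly of three results already proven in the paper (Theorem \ref{teo support minimizing measure} for the inclusion, Theorem \ref{teo existence minimizing measures} to place at least one support point inside each Mather class hitting $\A$, and Proposition \ref{prop quotient subsol} to spread equality from $\M$ to $\A$), followed by one invocation of Theorem \ref{teo uniqueness set}. The only minor point worth being careful about is to make sure the support $\supp(\pi_\#\tilde\mu)$ is nonempty, which is automatic for a probability measure on a metric (in particular Hausdorff and second-countable) space.
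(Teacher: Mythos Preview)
Your proof is correct and follows essentially the same route as the paper: the inclusion $\M\subset\A$ via Theorem \ref{teo support minimizing measure}, then the reduction of the uniqueness statement to equality on $\A$ by combining Theorem \ref{teo existence minimizing measures} (to place a support point of a projected Mather measure in each Mather class) with Proposition \ref{prop quotient subsol} (constancy of the difference on Mather classes), and finally the appeal to Theorem \ref{teo uniqueness set}. The paper's proof is just a terser version of what you wrote.
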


\begin{proof}
The fact that $\M$ is a closed subset of $\A$  follows from its definition and from Theorem \ref{teo support minimizing measure}. 
If two critical solutions coincide on $\M$, then they coincide on every Mather class of $\A$ in view of Theorem \ref{teo existence minimizing measures} and Proposition \ref{prop quotient subsol}, and hence on $\A$. The conclusion follows since $\A$ is a uniqueness set for the critical equation by Theorem \ref{teo uniqueness set}.
\end{proof}
\medskip

\end{appendix}

\bibliography{discount}
\bibliographystyle{siam}

\end{document}